\theoremstyle{theorem}
\newtheorem{theorem}{Theorem}[section]
\newtheorem{lemma}{Lemma}[section]
\theoremstyle{definition}
\newcounter{mathitem}
\begin{document}

\title{\bf Faithful subgraphs and Hamiltonian circles of infinite graphs
\thanks{Supported by NSFC (11601429). E-mail: libinlong@mail.nwpu.edu.cn.}}
\date{}

\author{Binlong Li\\[2mm]
\small Department of Applied Mathematics, Northwestern Polytechnical University,\\
\small Xi'an, Shaanxi 710072, P.R. China} \maketitle

\begin{center}
\begin{minipage}{140mm}
\small\noindent{\bf Abstract:} A circle of an infinite locally
finite graph $G$ is the imagine of a homeomorphic mapping of the
unit circle $S^1$ in $|G|$, the Freudenthal compactification of $G$.
A circle of $G$ is Hamiltonian if it meets every vertex (and then
every end) of $G$. In this paper, we study a method for finding
Hamiltonian circles of graphs. We illustrate this by extending
several results on finite graphs to Hamiltonian circles in infinite
graphs. For example, we prove that the prism of every 3-connected
cubic graph has a Hamiltonian circle, extending the result of the
finite case by Paulraja.

\smallskip
\noindent{\bf Keywords:} Hamiltonian circle; infinite graph;
faithful subgraph; prism.
\end{minipage}
\end{center}

\section{Introduction}

In this paper we always assume that $G$ is a \emph{locally finite}
graph, that is, all its vertices have finite degree. We follow
Diestel \cite{Di16} in our basic terminology for infinite graphs.

A 1-way infinite path is called a \emph{ray} of $G$, and the subrays
of a ray are its \emph{tails}. Two rays of $G$ are \emph{equivalent}
if for every finite set $S\subseteq V(G)$, there is a component of
$G-S$ containing tails of both rays. We write $R_1\approx_GR_2$ if
$R_1$ and $R_2$ are equivalent in $G$. The corresponding equivalence
classes of rays are the \emph{ends} of $G$. We denote by
$\varOmega(G)$ the set of ends of $G$. Let $\alpha\in\varOmega(G)$
and $S\subseteq V(G)$ be a finite set. We denote by $C(S,\alpha)$
the unique component of $G-S$ that containing a ray (and a tail of
every ray) in $\alpha$. We let $\varOmega(S,\alpha)$ be the set of
all ends $\beta$ with $C(S,\beta)=C(S,\alpha)$.

To built a topological space $|G|$ we associate each edge $uv\in
E(G)$ with a homeomorphic image of the unit real interval $[0,1]$,
where 0,1 map to $u,v$ and different edges may only intersect at
common endpoints. Basic open neighborhoods of points that are
vertices or inner points of edges are defined in the usual way, that
is, in the topology of the 1-complex. For an end $\alpha$ we let the
basic neighborhood
$\widehat{C}(S,\alpha)=C(S,\alpha)\cup\varOmega(S,\alpha)\cup
E(S,\alpha)$, where $S\subseteq V(G)$ is finite and $E(S,\alpha)$ is
the set of all inner points of the edges between $C(S,\alpha)$ and
$S$. This completes the definition of $|G|$, called the Freudenthal
compactification of $G$. In \cite{Di16} it is shown that if $G$ is
connected and locally finite, then $|G|$ is a compact Hausdorff
space.

An \emph{arc} of $G$ is the imagine of a homeomorphic map of the
unit interval $[0,1]$ in $|G|$; and a \emph{circle} is the imagine
of a homeomorphic map of the unit circle $S^1$ in $|G|$. A circle of
$G$ is \emph{Hamiltonian} if it meets every vertex (and then every
end) of $G$.

Diestel \cite{Di10} launched the ambitious project of extending
results on Hamiltonian cycles in finite graphs to Hamiltonian
circles in infinite graphs. He specifically conjectured that the
square of every 2-connected locally finite graph has a Hamiltonian
circle \cite{Di05}, in order to obtain a unification of Fleischner's
theorem \cite{Fl}. This was confirmed by Georgakopoulos \cite{Ge09}.

Georgakopoulos (see \cite{Di10}) then conjectured that the line
graph of every 4-edge-connected graph has a Hamiltonian circle.
Bruhn (see \cite{Di10}) conjectured that Tutte's theorem on
Hamiltonian cycles in 4- connected planar graphs can be extended to
Hamiltonian circles. These conjectures are open but significant
progress has been made by Lehner \cite{Le} on the former and by
Bruhn and Yu \cite{BrYu} on the latter. Several other results in
this area can be found in \cite{CuWaYu,HaLePo,He15,He16}.

In the present paper, we study a method for finding Hamiltonian
circles, which is closely related to the faithful subgraphs, the end
degrees and the Hamiltonian curves (see Section 2). We apply this by
extending several results on finite graphs. Specially we prove that
the prism of every 3-connected cubic graph has a Hamiltonian circle,
extending the finite result by Paulraja \cite{Pa}.

The paper is organized as follows: In Section 2, we give some
additional concepts, following which we present our main result. In
Section 3, we give a proof of our main result. In Section 4, we give
some properties of faithful subgraphs. In Sections 5, 6 and 7, we
extend some results on finite graphs by applying our result.

\section{Main Result}

Before giving our main result, we need some additional terminology.
A subgraph $F$ is called \emph{faithful} to $G$ if\\
(1) every end of $G$ contains a ray of $F$; and\\
(2) for any two rays $R_1,R_2$ of $F$, $R_1\approx_FR_2$ if and only
if $R_1\approx_GR_2$.\\
If $F\leq G$, then for every finite set $S\in V(F)$, each component
of $F-S$ is contained in a component of $G-S$. Thus the condition
(2) can be replaced by `for any two rays $R_1,R_2$ of $F$,
$R_1\approx_GR_2$ implies $R_1\approx_FR_2$'.

The \emph{(vertex-)degree} of an end $\alpha\in\varOmega(G)$ is the
maximum number of vertex-disjoint rays in $\alpha$; and the
edge-degree of $\alpha$ is the maximum number of edge-disjoint rays
in $\alpha$. We denote by $d(\alpha)$ the degree of an end $\alpha$.
We refer the reader to \cite{BrSt} for some properties on the end
degrees of graphs.

We define a \emph{curve} of $G$ as the imagine of a continuous map
of the unit interval $[0,1]$ in $|G|$. A curve is \emph{closed} if
$0,1$ map to the same point; and is \emph{Hamiltonian} if it is
closed and meets every vertex of $G$ exactly once. In other words, a
Hamiltonian curve is the imagine of a continuous map of the unit
circle $S^1$ in $|G|$ that meets every vertex of $G$ exactly once.
Thus a Hamiltonian curve may repeat ends. Note that a Hamiltonian
circle is a Hamiltonian curve but not vice versa. We refer the
reader to \cite{KuLiTh} for some properties on the Hamiltonian
curves.

Now we give a necessary and sufficient condition for the existence
of Hamiltonian circles.

\begin{theorem}\label{ThMain}
For an infinite locally finite graph $G$, the following three
statements are equivalence:\\
(1) $G$ has a Hamiltonian circle.\\
(2) $G$ has a spanning faithful subgraph $F$ with a Hamiltonian
curve and for every $\alpha\in\varOmega(F)$, $d(\alpha)=2$.\\
(3) $G$ has a spanning faithful subgraph $F$ with a Hamiltonian
curve and for every $\alpha\in\varOmega(F)$, $d(\alpha)\leq 3$.
\end{theorem}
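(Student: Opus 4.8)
The plan is to prove the cyclic chain $(1)\Rightarrow(2)\Rightarrow(3)\Rightarrow(1)$. The implication $(2)\Rightarrow(3)$ is immediate, since an end of degree $2$ has degree $\le 3$, so the very same subgraph $F$ witnesses both statements. The two substantial implications are $(3)\Rightarrow(1)$ and $(1)\Rightarrow(2)$, and I expect the latter to carry the real difficulty.

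For $(3)\Rightarrow(1)$ I would first reduce from $G$ to $F$: since $F$ is spanning and faithful, its vertices are exactly those of $G$, and faithfulness identifies the ends of $F$ with the ends of $G$ and lets one transport a circle of $F$ to a circle of $G$ (this is the step that uses faithfulness, and I would isolate it as a lemma, presumably among the properties proved in Section~4). It then suffices to upgrade the Hamiltonian \emph{curve} of $F$ to a Hamiltonian \emph{circle} of $F$. The key is a degree count at the ends. A Hamiltonian curve meets every vertex once, so one checks it traverses each edge it meets in full and is injective away from the ends; at each vertex it behaves like an arc and uses exactly two edges. If the curve passed through an end $\alpha$ at two parameter-separated times, each passage would contribute two tails converging to $\alpha$, and by injectivity off the ends these four tails would be pairwise disjoint rays in $\alpha$, forcing $d(\alpha)\ge 4$ and contradicting $d(\alpha)\le 3$. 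Hence for every end the parameter-preimage is connected; collapsing these (at most countably many) nondegenerate arcs turns the parametrisation into a continuous bijection from $S^1$ onto the image, which by compactness is a homeomorphism. The image, being closed and containing every vertex, contains every end, so it is a Hamiltonian circle.

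For $(1)\Rightarrow(2)$ I would start from a Hamiltonian circle $C$ and let $D$ be the set of edges it traverses. As above $D$ is spanning with all vertices of degree $2$, i.e. a disjoint union of finite cycles and double rays, and $C$ passes through each end of $G$ exactly once along two tails of $D$; in particular each end of $G$ contains exactly two disjoint rays of $D$. The obstruction is that $D$ need not be faithful: the two tails of $C$ at an end $\alpha$ may lie in different components of $D$ (the two-way infinite ladder $\mathbb{Z}\times K_2$, whose two horizontal double rays already form a Hamiltonian circle, shows this genuinely happens), so that $D$ both splits $\alpha$ into two ends of degree $1$ and violates condition $(2)$ of faithfulness. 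To repair this I would, for each such split end, add to $D$ an infinite family of pairwise disjoint paths joining its two tails. Such paths exist because the two tails are equivalent in $G$ and hence inseparable by any finite vertex set, so a Menger-type argument for ends yields infinitely many disjoint connections; choosing them inside $C(S,\alpha)$ for an exhausting sequence of finite sets $S$ keeps them marching towards $\alpha$. The enlarged subgraph $F$ is spanning, still carries $C$ as a Hamiltonian curve (which uses only edges of $D\subseteq F$), and merges the two tails into a single end that the ladder-like structure keeps at degree exactly $2$.

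The main obstacle is precisely this repair step in $(1)\Rightarrow(2)$. Because $C$ is Hamiltonian, the internal vertices of the connecting paths already lie on the circle, so adding the paths raises their degrees and threatens to create extra rays in $\alpha$, to merge $\alpha$ with other ends, or to spoil faithfulness elsewhere. The delicate point is to choose the connections deep enough (inside $C(S,\alpha)$, internally avoiding the ends already processed) and sparse enough that the resulting end has degree \emph{exactly} $2$, while conditions $(1)$ and $(2)$ of faithfulness hold simultaneously at every end. I would handle this by treating the countably many split ends one at a time along an exhaustion of $G$, controlling at each stage both the end-degree bookkeeping and the separation of the remaining ends, and I expect the properties of faithful subgraphs developed in Section~4 to be the right tool for making this rigorous.
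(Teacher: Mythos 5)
Your treatment of $(2)\Rightarrow(3)$ and $(3)\Rightarrow(1)$ matches the paper: the reduction to $F$ is the paper's Lemma \ref{LeFaithfulCircle}, and the upgrade from curve to circle is Lemma \ref{LeEndDegreeCircle} (though the paper argues via $|N_G(C(T,\alpha))|\ge 4$ and the end-degree characterization of Lemma \ref{LeDegreeEnd}, rather than claiming the curve approaches a repeated end along four genuine rays -- near an accumulation point of ends the curve need not contain rays converging to $\alpha$, so your phrasing would need the same repair).

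The implication $(1)\Rightarrow(2)$ is where your plan has a genuine gap. You take $F$ to be the $2$-factor $D=E(C)$ plus repairs, starting from the claim that each end of $G$ contains exactly two disjoint tails of $D$. That claim is false in general: $D$ decomposes into the double rays corresponding to the maximal end-free arcs of $C$, and each such double ray converges to the two ends at the endpoints of its arc. An end $\alpha$ that is an accumulation point of $\varOmega(G)$ in the circular order of $C$ (and when $\varOmega(G)$ is uncountable -- e.g.\ a Cantor set, as happens already for Hamiltonian circles in squares of trees -- \emph{every} end is such a point) has no adjacent arc at all, hence contains no ray of $D$ whatsoever. So there are no ``two tails'' to join, the set of ends needing repair can be uncountable, and a one-at-a-time repair along a countable exhaustion cannot reach them; moreover condition (1) of faithfulness already fails for $D$ at these ends, not just condition (2). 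The paper avoids this entirely by going in the opposite direction: instead of growing $F$ from the sparse $2$-factor, it shrinks $G$, taking an increasing sequence of cycles $C_i$ (which exist by Theorem \ref{ThKuLiTh}) and deleting at stage $i$ all edges ``crossed'' with the $V(C_i)$-intervals of the circle (only finitely many such edges exist). The limit $F$ still contains every $C_i$, so every finite vertex set lies on a cycle of $F$ and Theorem \ref{ThKuLiTh} gives the Hamiltonian curve, while the deleted crossing edges pinch each end's attachment down to the two interval endpoints $\{a,b\}$, which yields both faithfulness and $d(\alpha_F)=2$ uniformly over all ends, with no case distinction between isolated and accumulation ends. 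If you want to salvage your construction you would have to add to $D$ not just rungs between two tails but enough of $G$ to supply rays into every accumulation end, at which point you are essentially rebuilding the paper's $F$ from the other side.
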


The proof is postponed to the next section. We remark that in
\cite{KuLiTh}, the authors gave a characterization of Hamiltonian
circles. Notice that a 2-factor $F$ is a set of edges in $G$ such
that each vertex of $G$ is incident to exactly two edges in $F$.

\begin{theorem}[K\"undgen et al. \cite{KuLiTh}]
Let $G$ be an infinite locally finite graph. Then every Hamiltonian
circle of $G$ corresponds to a 2-factor $F$ of $G$ such that\\
(1) every finite cut intersects $F$ a positive even number of times,
and \\
(2) for each two distinct edges $e_1,e_2\in F$, $G$ has a finite cut
$M$ such that $F\cap M=\{e_1,e_2\}$.\\
Conversely, if a 2-factor $F$ satisfies (1)(2), then the closure of
$F$ is a Hamiltonian circle of $G$.
\end{theorem}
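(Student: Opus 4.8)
The plan is to argue the two implications separately inside the compact Hausdorff space $|G|$, using throughout the standard fact that any arc or circle in $|G|$ meeting the interior of an edge $e$ contains all of $e$; hence every circle $C$ is the closure of its edge set $E(C)=\{e\in E(G):e\subseteq C\}$, with $C\setminus E(C)$ a set of ends. For the forward direction I set $F:=E(C)$ for a given Hamiltonian circle $C$. Since $C$ is a homeomorphic image of $S^1$ meeting each vertex $v$ exactly once, a small neighbourhood of $v$ in $C$ is an arc with $v$ as an interior point, so exactly two edges at $v$ lie on $C$ and $F$ is a $2$-factor. To check (1), fix a finite cut $M=E(Y,\overline Y)$ with $Y,\overline Y\neq\emptyset$; since $M$ is finite, every end of $G$ lies strictly on one side, so a connected subset of $|G|$ can pass between the sides only through the midpoint of an edge of $M$. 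As $C$ uses each edge at most once it meets only the finitely many midpoints coming from $F\cap M$; removing their preimages from $S^1$ yields arcs mapping alternately into the two sides, and going once around $S^1$ forces $|F\cap M|$ to be even, and positive because $C$ meets vertices on both sides.

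For (2), given distinct $e_1,e_2\in F$ I delete their midpoints $m_1,m_2$ from $C$, splitting $C\cong S^1$ into two arcs whose closures $P_1,P_2$ satisfy $P_1\cup P_2=C$ and $P_1\cap P_2=\{m_1,m_2\}$. Put $X:=V(G)\cap P_1$ and $\overline X:=V(G)\cap P_2$. The interior of any edge of $F$ other than $e_1,e_2$ is connected and avoids $m_1,m_2$, hence lies in a single one of the two open arcs, so its endpoints lie on the same side; therefore $F\cap E(X,\overline X)=\{e_1,e_2\}$. It remains to show the cut $M:=E(X,\overline X)$ is finite. If it were infinite there would be distinct edges $x_ny_n$ with $x_n\in X$ and $y_n\in\overline X$; by local finiteness no vertex occurs infinitely often, so by compactness of $|G|$ I may pass to a subsequence with $x_n\to\alpha$ and $y_n\to\beta$ for ends $\alpha,\beta$, and $\alpha\in P_1$, $\beta\in P_2$ since $P_1,P_2$ are closed. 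If $\alpha\neq\beta$, some finite $S\subseteq V(G)$ separates them, i.e.\ $C(S,\alpha)\neq C(S,\beta)$, forcing all but finitely many edges $x_ny_n$ to join distinct components of $G-S$, which is impossible; hence $\alpha=\beta\in P_1\cap P_2=\{m_1,m_2\}$, contradicting that $\alpha$ is an end. Thus $M$ is the desired finite cut.

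For the converse, let $F$ be a $2$-factor satisfying (1) and (2). By (1) and the Diestel--K\"uhn characterisation of the topological cycle space (see \cite{Di16}), that a set of edges meeting every finite cut an even number of times is an edge-disjoint union of edge sets of circles, the closure $\overline F$ is a disjoint union of circles $\{C_i\}_{i\in I}$. Each $C_i$ meets a vertex with degree $0$ or $2$, and $\deg_F v=2$ for every $v$, so every vertex lies on exactly one $C_i$. To see $|I|=1$, suppose $i\neq j$ and pick $e_1\in E(C_i)$, $e_2\in E(C_j)$; by (2) there is a finite cut $M$ with $F\cap M=\{e_1,e_2\}$. Since $E(C_i)\subseteq F$ and $e_2\notin E(C_i)$, we get $E(C_i)\cap M=\{e_1\}$, an odd number, contradicting that the circle $C_i$ meets the finite cut $M$ evenly by the parity argument of (1). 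Hence $\overline F$ is a single circle, and as $F$ is spanning it is a Hamiltonian circle.

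The step I expect to be the main obstacle is the finiteness of the cut in (2): it is the one place that genuinely uses the compactness of $|G|$ together with the fact that the endvertices of edges cannot converge to two distinct ends, and it must be combined with careful verification of the separation and alternation properties of finite cuts in $|G|$ underlying the parity argument in (1).
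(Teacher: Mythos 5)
This theorem is quoted in the paper from K\"undgen, Li and Thomassen \cite{KuLiTh} and is given \emph{no proof} there, so there is no internal argument to compare yours against; your proposal has to be judged on its own merits. Judged so, it is correct and follows what is essentially the standard route for this result: the forward direction via the midpoint/alternation parity argument for (1) and the splitting of the circle at the two midpoints $m_1,m_2$ for (2), with compactness of $|G|$ yielding finiteness of the cut $E(X,\overline X)$; the converse via the Diestel--K\"uhn characterisation of the topological cycle space (evenness on finite cuts gives membership, hence a decomposition into edge-disjoint circuits), with condition (2) ruling out more than one circuit. You correctly identified the genuinely delicate step, the finiteness of the cut in (2), and your argument for it is sound: the limit of the $x_n$ lies in the closed arc $P_1$, the limit of the $y_n$ in $P_2$, distinct ends are separated by a finite vertex set (incompatible with the adjacency of $x_n$ and $y_n$), and $P_1\cap P_2=\{m_1,m_2\}$ contains no end.

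A few points worth tightening. First, your passage to convergent subsequences uses sequential compactness; $|G|$ is compact Hausdorff, which alone does not give this, so you should invoke metrizability of $|G|$ for connected locally finite $G$ (see \cite{Di16}), or replace the sequence argument by an open-cover argument around the finitely many candidate limit points. Second, in the converse the decomposition yields \emph{edge-disjoint} circuits; that the circles are vertex-disjoint is a consequence of $F$ being a 2-factor (each circle uses $0$ or $2$ edges at every vertex --- your phrase ``meets a vertex with degree $0$ or $2$'' should be read this way), and they may still share ends, though none of this affects your counting argument, which only uses that $E(C_i)\cap M=\{e_1\}$ is odd while a circle meets every finite cut evenly. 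Third, you never actually use the positivity in (1) in the converse; note that positivity is what forces $G$ to be connected (a disconnected graph has an empty finite cut), and connectedness is tacitly assumed both by the cycle-space machinery and by the compactness of $|G|$, so it deserves one explicit sentence rather than silence.
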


\section{Proof of Theorem \ref{ThMain}}

For a finite graph $G$, if $G$ has a spanning subgraph $F$ that has
a Hamiltonian cycle, then $G$ itself has a Hamiltonian cycle. But
this is not true for Hamiltonian circles. The main reason is that we
have to guarantee injectivity at the ends in Hamiltonian circles. We
first show that the existence of Hamiltonian circles is stable for
faithful spanning subgraphs.

\begin{lemma}\label{LeFaithfulCircle}
Let $G$ be an infinite locally finite graph, and let $F$ be a
faithful spanning subgraph of $G$. If $F$ has a Hamiltonian circle,
then $G$ has a Hamiltonian circle.
\end{lemma}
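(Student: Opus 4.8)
The plan is to transport the Hamiltonian circle of $F$ into $|G|$ through the natural map induced by the inclusion $F\leq G$, and to use faithfulness precisely to make this map well behaved at the ends. Concretely, I would define a map $\phi\colon|F|\to|G|$ that is the identity on every vertex and on the interior of every edge of $F$ (each such edge being an edge of $G$), and that sends an end $\omega$ of $F$ to the unique end of $G$ containing the rays of $\omega$. This is well defined because every ray of $F$ lies in exactly one end of $G$, and the values on the three types of points (vertices, inner edge-points, ends) land in the corresponding three types of points of $|G|$.

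Next I would verify that $\phi$ is injective and continuous, which is where the faithfulness conditions enter. Injectivity on vertices and edge-interiors is immediate since $F$ is a spanning subgraph; injectivity on ends is exactly the content of condition (2): if two rays of $F$ are equivalent in $G$ then they are already equivalent in $F$, so no two distinct ends of $F$ can be collapsed by $\phi$. (Condition (1) guarantees in addition that $\phi$ is onto $\varOmega(G)$, although this is not strictly needed for the argument below.) For continuity the only nontrivial points are the ends: given an end $\omega$ of $F$ with $\phi(\omega)=\alpha$ and a basic neighbourhood $\widehat{C}(S,\alpha)$ of $\alpha$ in $|G|$ (with $S\subseteq V(G)=V(F)$ finite), I would check that $\phi$ carries the basic neighbourhood $\widehat{C}(S,\omega)$ of $\omega$ in $|F|$ into $\widehat{C}(S,\alpha)$. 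This reduces to the observation that, since $F\leq G$, the component of $F-S$ containing $\omega$ is contained in the component of $G-S$ containing $\alpha$, so the vertices, ends and boundary edges appearing in $\widehat{C}(S,\omega)$ all land inside the corresponding sets defining $\widehat{C}(S,\alpha)$.

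With $\phi$ continuous and injective, the remainder is soft topology. Let $C\subseteq|F|$ be a Hamiltonian circle of $F$, so $C$ is a homeomorphic image of $S^1$ and hence compact. Since $F$ carries a spanning circle it is connected, so $G$ is connected and $|G|$ is compact Hausdorff. Then $\phi|_C\colon C\to|G|$ is a continuous injection from a compact space into a Hausdorff space, hence a homeomorphism onto its image; in particular $\phi(C)$ is itself a homeomorphic image of $S^1$, i.e.\ a circle of $G$. Because $\phi$ fixes every vertex and $C$ meets every vertex of $F=G$, the circle $\phi(C)$ meets every vertex of $G$; being compact it is closed, so it contains the closure of $V(G)$ and therefore meets every end as well. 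Thus $\phi(C)$ is a Hamiltonian circle of $G$.

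I expect the main obstacle to be the verification at the ends: both the continuity of $\phi$ there and, more importantly, ruling out that two distinct ends of $F$ merge in $|G|$. This is exactly the failure of injectivity that faithfulness is designed to prevent, and it is the reason the naive ``take any spanning subgraph with a Hamiltonian circle'' argument, valid for finite graphs, breaks down in the infinite setting. Once $\phi$ is known to be a continuous injection, the compact-to-Hausdorff argument is entirely routine.
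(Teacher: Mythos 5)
Your proof is correct and takes essentially the same approach as the paper's: both define the map induced by the end-bijection (identity on vertices and edge-interiors), use faithfulness to get injectivity at the ends and the containment $C(S,\omega)\subseteq C(S,\alpha)$ of basic neighbourhoods to get continuity, and then conclude that the image of the circle is a circle of $G$. The only cosmetic differences are that the paper checks continuity via convergent sequences rather than neighbourhoods and leaves the final compact-to-Hausdorff homeomorphism step implicit, which you spell out.
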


\begin{proof}
We define a map $\pi: \varOmega(F)\rightarrow\varOmega(G)$ such that
for an end $\alpha\in\varOmega(F)$, $\pi(\alpha)$ is the end of $G$
containing all rays in $\alpha$. By the definition of the faithful
subgraphs, $\pi$ is a bijection between $\varOmega(F)$ and
$\varOmega(G)$ (see \cite{Ge09}). Let $\alpha\in\varOmega(F)$ and
$S\subseteq V(G)$ be finite. Since $F\leq G$, the component
$C(S,\alpha)$ of $F-S$ is contained in $C(S,\pi(\alpha))$. If there
is an end $\beta\in\varOmega(S,\alpha)$, then every ray in $\beta$
has a tail contained in $C(S,\alpha)$, which is contained in
$C(S,\pi(\alpha))$. This implies that
$\pi(\beta)\in\varOmega(S,\pi(\alpha))$. It follows that
$\pi(\varOmega(S,\alpha))\subseteq\varOmega(S,\pi(\alpha))$.

Now let $\sigma_F: S^1\rightarrow|F|$ be a Hamiltonian circle of
$F$. We define $\sigma_G: S^1\rightarrow|G|$ such that
$$\sigma_G(p)=\left\{\begin{array}{ll}
  \pi(\sigma_F(p)), & \mbox{if }\sigma_F(p)\in\varOmega(F);\\
  \sigma_F(p),      & \mbox{otherwise}.
\end{array}\right.$$
Clearly the map $\sigma_G$ is injective and meets all vertices of
$V(G)$. Now we prove that it is continuous.

Since $\sigma_F$ is homeomorphic, $\sigma_G$ is continuous at point
$p$ if $\sigma_F(p)$ is a vertex or is an inner point of an edge.
Now we assume that $\sigma_F(p)=\alpha\in\varOmega(F)$. Let
$\boldsymbol{p}=(p_i)_{i=0}^\infty$ be a sequence of points in $S^1$
converges to $p$ and let $S\subseteq V(G)$ be a finite set. Since
$\sigma_F$ is continuous, the neighborhood $\widehat{C}(S,\alpha)$
of $\alpha$ contains almost all terms of $\boldsymbol{p}$ (that is,
there exists $j$ such that $\sigma(p_i)\in\widehat{C}(S,\alpha)$ for
all $i\geq j$). Recall that $C(S,\alpha)\subseteq C(S,\pi(\alpha))$,
$E(S,\alpha)\subseteq E(S,\pi(\alpha))$ and
$\pi(\varOmega(S,\alpha))\subseteq\varOmega(S,\pi(\alpha))$. It
follows that $\widehat{C}(S,\pi(\alpha))$ contains almost all terms
of $(\sigma_G(p_i))_{i=0}^\infty$, and thus
$(\sigma_G(p_i))_{i=0}^\infty$ converges to $\alpha$. This implies
that $\sigma_G$ is continuous and is a Hamiltonian circle of $G$.
\end{proof}

We will make use of K\"onig's Infinity Lemma.

\begin{lemma}[K\"onig \cite{Ko}, see also \cite{Di16}]\label{LeKo}
Let $V_0,V_1,V_2,\ldots$ be an infinite sequence of disjoint
non-empty finite sets, and let $G$ be a graph on
$\bigcup_{i=0}^\infty V_i$. Assume that every vertex in $V_i$ has a
neighbor in $V_{i-1}$, $i\geq 1$. Then $G$ has a ray
$R=v_0v_1v_2\ldots$ with $v_i\in V_i$ for all $i\geq 0$.
\end{lemma}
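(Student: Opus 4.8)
The plan is to prove the lemma by a backward-tracing argument combined with an iterated pigeonhole, which is the standard route to K\"onig's Infinity Lemma. First I would fix, for every vertex $v\in V_i$ with $i\geq 1$, a single neighbour $f(v)\in V_{i-1}$; such a choice exists by the hypothesis. Iterating $f$ from any $v\in V_n$ produces a finite path $P(v)=v\,f(v)\,f^2(v)\cdots f^n(v)$ that descends through exactly one vertex of each $V_i$ for $0\leq i\leq n$ and terminates in $V_0$. Since the sets $V_i$ are disjoint and non-empty for all $i$, there are infinitely many vertices in total, and hence infinitely many such descending paths $P(v)$.

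The core of the argument is to build the desired ray level by level, maintaining at each stage a vertex lying on infinitely many of the paths $P(v)$. Call a vertex \emph{popular} if it lies on infinitely many of these paths. Every path $P(v)$ ends in $V_0$, and $V_0$ is finite, so by pigeonhole some $v_0\in V_0$ is popular. For the inductive step I would show: if $v_n\in V_n$ is popular, then it has a neighbour $v_{n+1}\in V_{n+1}$ that is also popular. Indeed, among the infinitely many paths passing through $v_n$, at most one (namely $P(v_n)$ itself) starts at level $n$; every other such path starts at some level $m\geq n+1$, and therefore contains the vertex immediately above $v_n$, which lies in $V_{n+1}$ and is adjacent to $v_n$ by the definition of $f$. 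As $V_{n+1}$ is finite, a second application of pigeonhole yields a single $v_{n+1}\in V_{n+1}$, adjacent to $v_n$, that lies on infinitely many of these paths, hence is popular.

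Iterating this step produces a sequence $v_0,v_1,v_2,\ldots$ with $v_i\in V_i$ and $v_i$ adjacent to $v_{i+1}$ for all $i$. Because the $V_i$ are pairwise disjoint, the $v_i$ are automatically distinct, so $R=v_0v_1v_2\cdots$ is a genuine ray with $v_i\in V_i$, as required.

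The step I expect to be the main (and essentially the only) obstacle is the bookkeeping of ``infinitely many witnesses'' through the induction: one must verify carefully that the set of paths through the chosen $v_n$ stays infinite, and that discarding the lone path $P(v_n)$ still leaves infinitely many paths, each supplying a genuine $V_{n+1}$-neighbour of $v_n$ to which the pigeonhole can be applied. Everything else is routine; in particular, no appeal to the local finiteness of $G$ or to the topology of $|G|$ used elsewhere in the paper is needed here, only the finiteness of each individual $V_i$.
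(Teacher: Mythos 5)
Your argument is correct. Note that the paper does not prove this lemma at all: it is quoted as a classical result of K\"onig with a reference to Diestel's book, so there is no in-paper proof to compare against. Your proof is precisely the standard textbook argument (the one given in the cited reference): fix a choice function $f$ of down-neighbours, consider the descending paths $P(v)$, and propagate the ``lies on infinitely many paths'' invariant upward by pigeonhole on each finite level $V_{n+1}$, discarding the single path that starts at level $n$. All the delicate points you flag are handled correctly --- in particular, every path through a popular $v_n$ other than $P(v_n)$ must start at some level $m\geq n+1$ and hence contains the vertex $w\in V_{n+1}$ with $f(w)=v_n$, which is adjacent to $v_n$ --- so the induction goes through and the resulting sequence is a genuine ray since the $V_i$ are disjoint.
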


Let $G$ be a graph and $S\subseteq V(G)$. We set
$$N_G(S)=\bigcup_{v\in S}N_G(v)\backslash S,\ Z_G(S)=\{v\in S:
N_G(v)\backslash S\neq\emptyset\}, \mbox{ and } I_G(S)=S\backslash
Z_G(S).$$ Note that $Z_G(S)=N_G(V(G)\backslash S)$. For a graph $F$
with $V(F)\subseteq V(G)$, we use $N_G(F)$, $Z_G(F)$, and $I_G(F)$
instead of $N_G(V(F))$, $Z_G(V(F))$, and $I_G(V(F))$, respectively.

\begin{lemma}\label{LeDegreeEnd}
Let $G$ be an infinite locally finite graph and
$\alpha\in\varOmega(G)$. Then the following three statements are
equivalent:\\
(1) $d(\alpha)\leq k$;\\
(2) for every finite $S\subseteq V(G)$, there is a finite
$T\subseteq V(G)$, $S\subseteq T$, such that $|Z_G(C(T,\alpha))|\leq
k$;\\
(3) for every finite $S\subseteq V(G)$, there is a finite
$T\subseteq V(G)$, $S\subseteq T$, such that $|N_G(C(T,\alpha))|\leq
k$.
\end{lemma}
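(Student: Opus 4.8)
The plan is to prove the cyclic chain of implications $(1)\Rightarrow(2)\Rightarrow(3)\Rightarrow(1)$, which yields all three equivalences at once. Two of these three steps are elementary counting arguments, while the remaining one, $(1)\Rightarrow(2)$, carries all the weight and is where I expect to use K\"onig's Infinity Lemma (Lemma~\ref{LeKo}). Throughout I will exploit two ``shift'' operations that trade the inner boundary $Z_G$ against the outer boundary $N_G$ of the $\alpha$-region. The \emph{inward shift} replaces a separator $T$ by $T\cup Z_G(C(T,\alpha))$: since the interior $I_G(C(T,\alpha))$ has all its neighbours inside $C(T,\alpha)$, the new $\alpha$-component lies in $I_G(C(T,\alpha))$ and therefore satisfies $N_G(C(T\cup Z_G(C(T,\alpha)),\alpha))\subseteq Z_G(C(T,\alpha))$. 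The \emph{outward shift} enlarges a region $C(X,\alpha)$ to $R=C(X,\alpha)\cup N_G(C(X,\alpha))$; because every vertex of $C(X,\alpha)$ has all its outside-neighbours in $N_G(C(X,\alpha))\subseteq R$, one gets $Z_G(R)\subseteq N_G(C(X,\alpha))$, and $R=C(N_G(R),\alpha)$ is again a component of a graph obtained by deleting the finite set $N_G(R)$.

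For $(2)\Rightarrow(3)$ I will apply the inward shift directly: given finite $S$, take $T\supseteq S$ with $|Z_G(C(T,\alpha))|\le k$ from $(2)$ and put $T'=T\cup Z_G(C(T,\alpha))\supseteq S$; the computation above gives $|N_G(C(T',\alpha))|\le|Z_G(C(T,\alpha))|\le k$. For $(3)\Rightarrow(1)$ I argue by contradiction: if $d(\alpha)\ge k+1$ there are $k+1$ vertex-disjoint rays of $\alpha$, and I take $S$ to be the finite set of their initial vertices. Applying $(3)$ to this $S$ produces a finite $T\supseteq S$ with $|N_G(C(T,\alpha))|\le k$. Each ray starts at a vertex of $S\subseteq T$, hence outside $C(T,\alpha)$, yet has a tail inside $C(T,\alpha)$; so each ray has a last vertex before it first enters $C(T,\alpha)$, and this vertex lies in $N_G(C(T,\alpha))$. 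Disjointness of the rays makes these $k+1$ vertices distinct, contradicting $|N_G(C(T,\alpha))|\le k$.

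The heart of the argument is $(1)\Rightarrow(2)$, which I will prove in contrapositive form: assuming there is a finite $S$ with $|Z_G(C(T,\alpha))|\ge k+1$ for \emph{every} finite $T\supseteq S$, I will construct $k+1$ vertex-disjoint rays in $\alpha$. The construction has two stages. First (Stage A) I convert the hypothesis into a genuine connectivity statement: no set of at most $k$ vertices separates $S$ from $\alpha$. Indeed, if some finite $X$ with $|X|\le k$ separated $\alpha$ from $S$, then $N_G(C(X,\alpha))\subseteq X$ has at most $k$ vertices, and the outward shift would exhibit a finite $T'\supseteq S$ with $|Z_G(C(T',\alpha))|\le k$, contradicting the hypothesis; to keep $S$ off the new boundary when realizing $R$ as an $\alpha$-component I will first enlarge $S$ to its closed neighbourhood $S\cup N_G(S)$, so that no vertex of $S$ is adjacent to the deep region. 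Then (Stage B), fixing an exhausting sequence $T_0\subseteq T_1\subseteq\cdots$ of finite sets with $\bigcup_n T_n=V(G)$ (balls around a root, enlarged by $S$) and the nested regions $C_n=C(T_n,\alpha)$, Stage A together with Menger's theorem gives, for every $m$, a family of $k+1$ vertex-disjoint paths from a fixed $(k+1)$-element frontier near $S$ into $C_m$. Since every such path must cross each inner boundary $Z_G(C_n)$ with $n\le m$, and these boundaries are finite by local finiteness, K\"onig's Infinity Lemma (Lemma~\ref{LeKo}) applied to the finitely many path-traces at successive levels lets me select a coherent infinite system whose union is the desired $k+1$ disjoint rays of $\alpha$.

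The main obstacle is Stage A, the passage from the inner-boundary hypothesis to a genuine $(k+1)$-separator bound. The subtlety is that a single poorly chosen $T$ can make $Z_G(C(T,\alpha))$ large even when a cheap separator exists --- for instance when a low-degree straddling vertex is left on the wrong side of the cut --- so $|Z_G(C(T,\alpha))|\ge k+1$ for one $T$ says nothing by itself. It is precisely the fact that the bound holds for \emph{all} $T\supseteq S$, combined with the outward shift turning any cheap separator into a ``good'' low-$Z$ cut, that forces high connectivity; here local finiteness is essential, both to keep all boundaries finite and to rule out infinite-degree hub examples for which the statement would fail. Getting the placement of $S$ in the outward shift exactly right via the closed-neighbourhood shielding, and verifying that the Menger paths genuinely thread all the shells, are the two points I expect to demand the most care.
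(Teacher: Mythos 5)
You follow the same route as the paper: the cyclic chain $(1)\Rightarrow(2)\Rightarrow(3)\Rightarrow(1)$, an inward boundary shift for $(2)\Rightarrow(3)$, a first-entry argument for $(3)\Rightarrow(1)$, and shells plus Menger plus K\"onig for the hard implication. Your two easy implications are correct (your $(3)\Rightarrow(1)$ is in fact cleaner than the paper's, whose first-entry vertices $u_i$ lie in $Z_G(C(T,\alpha))$ and so contradict (2) rather than the assumed (3), whereas your predecessor vertices correctly land in $N_G(C(T,\alpha))$). Your Stage A --- the outward shift $C(X,\alpha)\cup N_G(C(X,\alpha))$ combined with the closed-neighbourhood shielding $S\cup N_G(S)$ --- is also correct, and it supplies exactly the justification that the paper compresses into ``By Menger's Theorem, we can see that\ldots''.

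The genuine gap is in Stage B, at the K\"onig step. Lemma~\ref{LeKo} requires each level to be a \emph{finite} set, and your levels are not finite: the $k+1$ disjoint paths that Menger produces in $G$ are not confined to any finite vertex set, because on their way from $S$ to $C(T_m,\alpha)$ they may make arbitrarily long detours through components of $G-T_n$ other than $C(T_n,\alpha)$, and such components can be infinite (they may carry other ends). Consequently the collection of path systems, and equally of their truncations at level $n$, is infinite; finiteness of the boundaries $Z_G(C(T_n,\alpha))$ bounds only \emph{where} the paths cross, not \emph{how many} systems there are. If instead ``path-traces'' is read as the finite crossing data on the boundaries, the levels become finite, but a coherent sequence of traces does not reconstruct actual rays, since it forgets the connecting segments. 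The paper resolves precisely this tension by restricting its path systems to the finite induced subgraphs $G[S_{i+1}]$, $S_{i+1}=S_i\cup Z_G(C(S_i,\alpha))$, which makes each $\mathcal{U}_i$ finite; but then Menger must be applied \emph{inside} $G[S_{i+1}]$, and your Stage A only excludes small separators in $G$ --- a priori, the connectivity it guarantees could run through the components that $G[S_{i+1}]$ does not see. Closing this requires a further structural observation (for instance: every component $D$ of $G-S_{i+1}$ other than $C(S_{i+1},\alpha)$ has $N_G(D)\subseteq Z_G(C(S_j,\alpha))$ for a single $j\le i$, or $N_G(D)\subseteq S$, so such components can never link different shells), which your proposal does not contain; as written, the appeal to K\"onig's Infinity Lemma does not go through.
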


\begin{proof}
(1) $\Rightarrow$ (2). Suppose that there exists a finite set
$S\subseteq V(G)$, such that for every finite set $T\subseteq V(G)$
containing $S$, $|Z_G(C(T,\alpha))|\geq k+1$. We take such an $S$
with $|S|\geq k+1$. Set $S_0=S$, and for $i=1,2,\ldots$, set
$S_i=S_{i-1}\cup Z_G(C(S_{i-1},\alpha))$. By Menger's Theorem, we
can see that $G[S_{i+1}]$ has $k+1$ vertex-disjoint paths between
$S_0$ and $C(S_i,\alpha)$. Let $\mathcal{U}_i$ be the set of the
unions of $k+1$ vertex-disjoint paths between $S_0$ and
$C(S_i,\alpha)$. Since $S_{i+1}$ is finite, we have that
$\mathcal{U}_i$ is finite for every $i\geq 0$.

We define a graph $\mathcal{G}$ on
$\bigcup_{i=0}^\infty\mathcal{U}_i$ such that
$U_{i-1}\in\mathcal{U}_{i-1}$ is adjacent to $U_i\in\mathcal{U}_i$
if and only if the $k+1$ paths of $U_{i-1}$ are the subpaths of the
$k+1$ paths of $U_i$. Clearly every vertex in $\mathcal{U}_i$ has a
neighbor in $\mathcal{U}_{i-1}$. By Lemma \ref{LeKo}, $\mathcal{G}$
has a ray $\mathcal{R}=U_0U_1U_2\ldots$ with $U_i\in\mathcal{U}_i$,
$i\geq 0$. It follows that $\bigcup_{i=0}^\infty U_i$ is the union
of $k+1$ vertex-disjoint rays in $\alpha$, implying that the degree
of $\alpha$ is at least $k+1$, a contradiction.

(2) $\Rightarrow$ (3). This can be deduced by the fact that
$|N_G(C(T,\alpha))|\leq |Z_G(C(T\backslash Z_G(T),\alpha))|$.

(3) $\Rightarrow$ (1). Suppose that $d(\alpha)\geq k+1$. Let
$R_1,\ldots,R_{k+1}$ be $k+1$ rays in $\alpha$, and let $S$ be the
set of the origins of the $k+1$ rays. Let $T$ be an arbitrary finite
set with $S\subseteq T\subseteq V(G)$. It follows that $C(T,\alpha)$
contains a tail of every ray $R_i$, $i=1,\ldots,k+1$. Let $u_i$ be
the first vertex along $R_i$ appearing in $C(T,\alpha)$. Thus
$u_i\in Z_G(C(T,\alpha))$, implying that $|Z_G(C(T,\alpha))|\geq
k+1$, a contradiction.
\end{proof}

\begin{lemma}\label{LeEndDegreeCircle}
If every end of $G$ has degree at most 3, then every Hamiltonian
curve of $G$ is also a Hamiltonian circle.
\end{lemma}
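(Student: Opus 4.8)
The plan is to show that a Hamiltonian curve $\sigma\colon S^1\to|G|$ fails to be a Hamiltonian circle only if it is non-injective, and that the sole possible source of non-injectivity---repetition of an end---forces that end to have degree at least $4$. First I would record the reduction. The image $\sigma(S^1)$ is compact, hence closed in $|G|$, and it contains every vertex; since the vertices are dense and their limits are exactly the ends, $\sigma^{-1}(\alpha)\neq\emptyset$ for every $\alpha\in\varOmega(G)$. As $\sigma$ already meets each vertex exactly once, it becomes a continuous bijection onto its image as soon as it is injective on ends, and a continuous injection from the compact space $S^1$ into the Hausdorff space $|G|$ is a homeomorphism onto its image. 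Thus it suffices to prove $|\sigma^{-1}(\alpha)|=1$ for every end $\alpha$ with $d(\alpha)\le 3$; for then $\sigma$ is a homeomorphic image of $S^1$ meeting every vertex, i.e. a Hamiltonian circle.

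Next I would prove the contrapositive of the key claim: if $\sigma$ meets an end $\alpha$ in at least two points, then $d(\alpha)\ge 4$. I would pick distinct $p,p'\in\sigma^{-1}(\alpha)$, which split $S^1$ into two arcs $J_1,J_2$, and then isolate four pairwise disjoint \emph{one-sided approach arcs}: a short sub-arc of each $J_i$ running into $p$ and a short sub-arc of each $J_i$ running into $p'$, chosen short enough to be pairwise disjoint in $S^1$. The strategy is to extract from each of these four approaches a ray of $G$ lying in $\alpha$, and to argue that the four rays are vertex-disjoint.

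For the extraction of a ray from one approach arc $I$ (with $\sigma(t)\to\alpha$ as $t$ tends to the $\alpha$-endpoint of $I$), I would fix an exhaustion $S_0\subseteq S_1\subseteq\cdots$ of $V(G)$ by finite sets. Continuity yields, for each $n$, a terminal sub-arc of $I$ whose image lies in the basic neighbourhood $\widehat C(S_n,\alpha)$, so that every vertex $\sigma$ meets there lies in $C(S_n,\alpha)$; hence the vertices met along $I$ descend into $\alpha$ to unbounded depth. Using the finite separators supplied by Lemma \ref{LeDegreeEnd}, together with the fact that a connected subset of $|G|$ joining $\alpha$ to a point outside $\widehat C(S_n,\alpha)$ must cross the finite vertex boundary of $C(S_n,\alpha)$, I would show that $I$ meets infinitely many vertices of increasing depth, and then assemble these into a genuine ray $R\subseteq\alpha$ by König's Infinity Lemma (Lemma \ref{LeKo}), taking the $n$-th level to be the vertices met by $I$ near depth $n$ and using the arc of $\sigma$ between consecutive visited vertices to supply the edges of $R$. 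By construction every vertex of $R$ is met by $\sigma$ inside $I$.

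Finally, since the four approach arcs are pairwise disjoint and $\sigma$ meets each vertex of $G$ exactly once, the vertex sets of the four extracted rays are pairwise disjoint; as all four rays lie in $\alpha$, the end $\alpha$ contains four vertex-disjoint rays, whence $d(\alpha)\ge 4$, contradicting $d(\alpha)\le 3$. This contradiction gives $|\sigma^{-1}(\alpha)|\le 1$ and completes the proof. I expect the main obstacle to be the ray extraction of the previous paragraph: converting the purely topological statement that $\sigma$ converges to $\alpha$ along $I$ into a combinatorial ray requires ruling out pathological behaviour (the curve sliding back and forth along an edge, or dipping through other ends between consecutive visited vertices) and verifying that consecutive visited vertices are genuinely adjacent in $G$, and it is precisely here that the finite separators of Lemma \ref{LeDegreeEnd} and König's Lemma must do the work.
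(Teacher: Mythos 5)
Your high-level strategy is sound and genuinely different from the paper's. The paper never constructs a single ray: having fixed two vertices separating the two copies of $\alpha$ on the curve, it observes that for \emph{every} finite $T\supseteq S$ the two $T$-intervals containing copies of $\alpha$ have their interiors inside $\widehat{C}(T,\alpha)$, so their four endpoints lie in $N_G(C(T,\alpha))$; Lemma \ref{LeDegreeEnd} (the equivalence of $d(\alpha)\le k$ with the existence of small separators $N_G(C(T,\alpha))$) then gives $d(\alpha)\ge 4$ directly. Your plan instead builds the four witnessing rays explicitly from the four one-sided approaches. The disjointness argument at the end is correct (a common vertex of two rays would be visited in two disjoint sub-arcs of $S^1$), so everything hinges on the extraction step, exactly as you say.

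That extraction step is where there is a genuine gap, and the mechanism you sketch would not work as stated. You propose to run K\"onig's Lemma with levels consisting of ``vertices met by $I$ near depth $n$'' and edges ``supplied by the arc of $\sigma$ between consecutive visited vertices.'' Two problems. First, consecutive visited vertices are adjacent in $G$ only when the piece of curve between them avoids ends (the components of $|G|$ minus the vertices and ends are single open edges, so an end-free gap between two vertex-visits lies in one edge); but the curve may pass through other ends of $\varOmega(S_n,\alpha)$ between consecutive visits, in which case the two vertices need not be adjacent, and around each such intermediate end the curve visits infinitely many vertices, so your levels need not be finite either. Thus the hypothesis of Lemma \ref{LeKo} (each level finite and nonempty, every vertex with a neighbour one level down) is not available from the data you cite. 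Second, Lemma \ref{LeDegreeEnd} supplies separators for $\alpha$ itself, not the adjacency information inside the visited vertex set that K\"onig's Lemma needs. The statement you want --- that each approach arc yields a ray in $\alpha$ all of whose vertices are visited on that arc --- is true, but proving it requires either a genuinely more careful combinatorial argument (e.g.\ decomposing the arc into its maximal end-free stretches, showing each such stretch ending at an end contains a connected subgraph with vertices arbitrarily deep in that end, and applying the comb argument of Lemma \ref{LeComb}), or an appeal to the nontrivial topological fact that closed connected subsets of $|G|$ are arc-connected together with the standard structure theory of arcs. Since you explicitly defer precisely this step, the proof is not complete; the paper's route via Lemma \ref{LeDegreeEnd} is worth internalizing because it bypasses ray construction entirely.
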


\begin{proof}
Let $C$ be a Hamiltonian curve of $G$. It sufficient to show that
$C$ passes through each end exactly once. Suppose that it passes
through an end $\alpha$ at least twice. Let $S$ be a set of two
vertices that separating two copies of $\alpha$ on $C$. For an
arbitrary finite set $T$ with $S\subseteq T\subseteq V(G)$, $T$
divide $C$ in to $|T|$ intervals (maximal curves of $C$ that is
internally disjoint from $T$). At least two of the intervals
containing a copy of $\alpha$, say $I_1=C[u_1,v_1]$,
$I_2=C[u_2,v_2]$, where $u_1,u_2,v_1,v_2\in T$. Since $|G|$ is
compact, we have that
$I_1\backslash\{u_1,v_1\},I_2\backslash\{u_2,v_2\}$ are both
contained in $\widehat{C}(T,\alpha)$. It follows that
$u_1,u_2,v_1,v_2\in N_G(C(T,\alpha))$, implying that
$|N_G(C(T,\alpha))|\geq 4$. By Lemma \ref{LeDegreeEnd},
$d(\alpha)\geq 4$, a contradiction.
\end{proof}

In \cite{KuLiTh}, the authors obtained some necessary and sufficient
conditions for a graph $G$ to have a Hamiltonian curve. We list one
of the conditions which we will use in our proof.

\begin{theorem}[K\"undgen et al. \cite{KuLiTh}]\label{ThKuLiTh}
An infinite locally finite graph $G$ has a Hamiltonian curve if and
only if every finite set $S\subseteq V(G)$ is contained in a cycle
of $G$.
\end{theorem}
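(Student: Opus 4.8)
For the ``only if'' direction, suppose $f\colon S^1\to|G|$ is a Hamiltonian curve and let $S\subseteq V(G)$ be finite; I may assume $|S|\ge 3$ by enlarging $S$. Since $f$ meets each vertex exactly once, the preimages of the vertices of $S$ are $|S|$ distinct points of $S^1$, cutting $S^1$ into arcs $A_1,\dots,A_k$ whose images run between consecutive vertices $v_i,v_{i+1}$ of $S$ in the cyclic order induced by $f$, with $f(\mathrm{int}\,A_i)\cap S=\emptyset$. The plan is to replace each $f(A_i)$ by a genuine finite $v_i$--$v_{i+1}$ path in $G$. Writing $V_i$ for the set of vertices whose $f$-preimage lies in $A_i$, injectivity of $f$ on vertices forces the sets $V_i$ to be pairwise disjoint except at the shared endpoints $v_i$; hence if I can produce a finite $v_i$--$v_{i+1}$ path $P_i$ inside $G[V_i]$ for each $i$, the $P_i$ are automatically internally disjoint and $\bigcup_i P_i$ is a cycle through $S$. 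The existence of such a path reduces to showing that $v_i$ and $v_{i+1}$ lie in the same component of $G[V_i]$, which I would extract from the connectedness of the compact set $f(A_i)$ in $|G|$: a continuous image of an interval cannot jump between components of $G[V_i]$ without either passing through a vertex outside $V_i$ or escaping every basic neighbourhood of the ends it meets, both of which are excluded.

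For the ``if'' direction, assume every finite vertex set lies on a cycle; in particular $G$ is connected. Enumerate $V(G)=\{v_1,v_2,\dots\}$, put $S_n=\{v_1,\dots,v_n\}$, and fix for each $n$ a finite cycle $C_n\supseteq S_n$. Each $C_n$ induces a cyclic order on $S_n$, and restricting to $S_m$ for $m\le n$ gives a cyclic order of $S_m$. I would organise these restrictions into a tree whose level-$m$ nodes are the cyclic orders of $S_m$ that are induced by $C_n$ for infinitely many $n$, with a level-$m$ order joined to each level-$(m+1)$ order refining it. A pigeonhole argument shows that every node has a child, so this tree is infinite with finite levels; K\"onig's Infinity Lemma (Lemma \ref{LeKo}) then yields a coherent branch, that is, a single cyclic order $\rho$ on all of $V(G)$ whose restriction to each $S_m$ is induced by infinitely many of the $C_n$.

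It remains to realise $\rho$ as a Hamiltonian curve, and this is where I expect the real difficulty to lie. I would embed the countable cyclically ordered set $(V(G),\rho)$ into $S^1$ order-preservingly and define $f$ on the chosen vertex-points; the open arcs of $S^1$ between consecutive occupied points must then be filled continuously. Where the two bounding vertices are adjacent in $G$ the gap is filled by the corresponding edge, while the remaining gaps must be filled by arcs that run out to an end and back. The crux is to prove that such filling arcs exist and that the resulting map is continuous at every accumulation point. Here I would use compactness of $|G|$ together with the fact that $\rho|_{S_m}$ is induced by arbitrarily large cycles $C_n$: the segment of such a $C_n$ linking two consecutive vertices of $S_m$ must eventually leave every finite set, so its vertices accumulate only at ends, forcing the limiting arcs to converge to ends and supplying the continuity. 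Establishing this convergence cleanly---and in particular guaranteeing that two vertices consecutive in $\rho$ with no vertex between them are genuinely joinable in $|G|$, rather than separated by a spurious gap---is the main obstacle. Once continuity is secured, $f$ is by construction a closed curve meeting every vertex of $G$ exactly once, i.e.\ a Hamiltonian curve, completing the equivalence.

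I therefore view the forward implication as essentially a careful but routine use of the compactness of $|G|$ and the injectivity of the curve on vertices, whereas the backward implication is the substantive half: the combinatorial limit order is handed to us by K\"onig's Lemma, and the work lies entirely in converting that order into a continuous map and verifying its behaviour at the ends.
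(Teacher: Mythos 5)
First, a remark on the comparison you asked for: the paper does not prove this statement at all --- it is quoted from K\"undgen, Li and Thomassen \cite{KuLiTh} and used as a black box --- so your argument can only be judged on its own terms, and on those terms both directions have genuine gaps.

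The ``only if'' direction rests on a false claim, namely that $v_i$ and $v_{i+1}$ lie in the same component of $G[V_i]$, where $V_i$ is the set of vertices whose $f$-preimage lies in the arc $A_i$. Here is a counterexample. Let $V(G)=\{a_i,b_i,c_i,d_i: i\geq 0\}$, with the four rays $a_0a_1\ldots$, $b_0b_1\ldots$, $c_0c_1\ldots$, $d_0d_1\ldots$ and the cross edges $a_ic_i$, $b_ic_i$, $a_id_i$, $b_id_i$ for every $i\geq 0$. This graph is locally finite with a single end $\omega$, and the $2$-regular spanning subgraph formed by the two double rays $\ldots a_1a_0c_0c_1\ldots$ and $\ldots b_1b_0d_0d_1\ldots$ yields a Hamiltonian curve that runs $a_0a_1\ldots\omega\ldots b_1b_0d_0d_1\ldots\omega\ldots c_1c_0a_0$ (visiting $\omega$ twice, which a curve may do). For $S=\{a_0,b_0,c_0\}$ the arc from $a_0$ to $b_0$ meets exactly the vertices $\{a_i\}\cup\{b_i\}$, and since $G$ has no edge $a_ib_j$, the induced subgraph on this set is a disjoint union of two rays: there is no $a_0$--$b_0$ path inside it. (A cycle through $S$ still exists, e.g.\ $a_0d_0b_0c_0a_0$, but it cannot be found by routing each consecutive pair of $S$ through the vertices the curve visits between them.) The underlying problem is that the connectedness of the compact set $f(A_i)$ may be achieved only ``through an end'', via connecting paths of $G$ whose internal vertices are visited in other arcs; your topological argument cannot exclude this. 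A correct proof must allow the paths $P_i$ to leave $V_i$ and then do real work to make them internally disjoint, and this is precisely where the argument in \cite{KuLiTh} is nontrivial.

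The ``if'' direction is a plan rather than a proof. Extracting a coherent cyclic order $\rho$ by K\"onig's Lemma is fine, but converting $\rho$ into a continuous map --- choosing the embedding of $(V(G),\rho)$ into $S^1$, filling the gaps, and proving continuity at accumulation points --- is the substantive content of the theorem, and you explicitly leave it open. One concrete inaccuracy in the sketch: if two vertices are genuinely consecutive on a curve, with no vertex mapped strictly between them, then the intervening image is a nonempty connected vertex-free subset of $|G|$ and hence lies in a single open edge, so the two vertices must be adjacent in $G$; a gap that ``runs out to an end and back'' without meeting further vertices is impossible. Thus the filling arcs through ends can only occur where vertices of $\rho$ accumulate, and guaranteeing that the limit order has adjacent consecutive pairs and coherent accumulation behaviour is exactly the part that still has to be proved. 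As it stands, neither implication is established.
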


Now we give the proof Theorem \ref{ThMain}.

\begin{proof}
The assertion $(2)\Rightarrow(3)$ is trivial and the assertion
$(3)\Rightarrow(1)$ was deduced by Lemmas \ref{LeFaithfulCircle} and
\ref{LeEndDegreeCircle}. Now we show that $(1)\Rightarrow(2)$. Let
$C$ be a Hamiltonian circle of $G$ (with a given orientation).
Recall that there is a bijection between the end set of $G$ and that
of any faithful subgraph $F$ of $G$. Therefore if $E(C)\subseteq
E(F)$, then we have a Hamiltonian circle of $F$ by using each end of
$F$ instead of the corresponding end of $G$. In this meaning, we may
say $C$ is a Hamiltonian circle of $F$ without ambiguity. Also note
that if we remove finitely many of edges from $G$, then the
resulting graph is faithful to $G$.

For any finite subset $S\subseteq V(G)$, $S$ divides $C$ into $|S|$
arcs, called \emph{$S$-intervals}. An edge $uv\in E(G)$ with
$u,v\notin S$ is \emph{crossed} with $S$ if $u,v$ are contained in
two distinct $S$-intervals. We claim that $G$ has only finitely many
of edges crossed with $S$. If there are infinitely many of edges
between some two $S$-intervals, then some end of $G$ will appear in
both $S$-intervals, a contradiction. Since there are only finitely
many of $S$-intervals, we have that there are only finitely many of
edges crossed with $S$.

Set $V(G)=\{v_1,v_2,\ldots\}$. Let $C_1$ be a cycle of $G$
containing $v_1$, $S_1=V(C_1)$, and $G_1$ be the graph obtained from
$G$ by removing all edges crossed with $S_1$. Clearly $G_1$ is
faithful to $G$ and $C$ is a Hamiltonian circle of $G_1$ as well.
Now for $i=2,3,\ldots$, let $C_i$ be a cycle of $G_{i-1}$ containing
$S_{i-1}\cup\{v_i\}$, $S_i=V(C_i)$ and $G_i$ be the graph obtained
from $G_{i-1}$ by removing all edges crossed with $S_i$. Note that
for each $i\geq 2$, $G_i$ is faithful to $G_{i-1}$ and $C$ is a
Hamiltonian circle of $G_i$. By Theorem \ref{ThKuLiTh}, the cycle
$C_{i+1}$ exists. Moreover, for any edge $uv\in E(G_i)$ with $u,v\in
S_i$, $uv\in E(G_j)$ for all $j\geq i$.

Now let $F$ be the spanning subgraph of $G$ such that for any edge
$v_iv_j\in E(G)$, $v_iv_j\in E(F)$ if and only if $v_iv_j\in
E(G_{\max\{i,j\}})$. It remained to show that $F$ is faithful to
$G$, $F$ has a Hamiltonian curve and every end of $F$ has degree 2.

For any finite set $S\subseteq V(G)$, let $j=\max\{i: v_i\in S\}$.
Then $C_j$ is a cycle of $G_j$ containing $S$, and clearly
$E(C_j)\subseteq E(F)$. By Theorem \ref{ThKuLiTh}, $F$ has a
Hamiltonian curve.

Let $R_1,R_2$ be two rays of $F$ with $R_1\approx_GR_2$, and let
$S\subseteq V(G)$ be an arbitrary finite set. For convenient we
assume $|S|\geq 3$. Let $R'_1,R'_2$ be the tails of $R_1,R_2$
contained in $F-S$. Set $j=\max\{i: v_i\in S\}$ (so $S\subseteq
S_j$). Since $E(G)\backslash E(G_j)$ is finite, $G_j$ is faithful to
$G$. Recall that $C$ is a Hamiltonian circle of $G_j$. Let
$\alpha\in\varOmega(G)$, $\alpha_j\in\varOmega(G_j)$ be such that
$\{R_1,R_2\}\subseteq\alpha_j\subseteq\alpha$. It follows that there
is a unique $S_j$-intervals, say $I_j=C[a,b]$, that passes through
$\alpha_j$. Let $u_1\in V(R'_1),u_2\in V(R'_2)$, be two vertices
contained in $I_j\backslash\{a,b\}$. By the definition of $G_j$,
$\{a,b\}$ is a cut of $G_j$ separating $\{u_1,u_2\}$ and
$S_j\backslash\{a,b\}$. Let $k$ be such that
$S_j\cup\{u_1,u_2\}\subseteq S_k$. Thus $C_k$ is a cycle of $G_k$
containing $S_j\cup\{u_1,u_2\}$. Now $C_k$ contains a
$(u_1,u_2)$-path that vertex-disjoint with $S_j$. Since
$E(C_k)\subseteq E(F)$, $u_1,u_2$ are connected in $F-S_j$, and
then, in $F-S$. It follows that $R_1\approx_FR_2$. Thus $F$ is
faithful to $G$.

Let $\alpha_F\in\varOmega(F)$ and $S\subseteq V(G)$ be an arbitrary
finite set. Since $F$ has a Hamiltonian curve, we have
$d(\alpha_F)\geq 2$. Let $\alpha\in\varOmega(G)$ with
$\alpha_F\subseteq\alpha$, and let $S_j,G_j,\alpha_j$ and $I_j$ be
as above. Similarly as above, we can prove that for any two vertices
$u_1,u_2$ contained in $I_j\backslash\{a,b\}$, $u_1,u_2$ are
connected in $F-S_j$. It follows that
$C(S_j,\alpha_F)=F[V(I)\backslash\{a,b\}]$. Note that
$N_F(V(I)\backslash\{a,b\})=\{a,b\}$. By Lemma \ref{LeDegreeEnd}, we
have $d(\alpha_F)\leq 2$.
\end{proof}

\section{Properties of faithful subgraphs}

\begin{lemma}\label{LeFaithfulFaithful}
Suppose that $D\leq F\leq G$. Then any two of the following
statements imply the third one: (1) $D$ is faithful to $F$; (2) $F$
is faithful to $G$; (3) $D$ is faithful to $G$.
\end{lemma}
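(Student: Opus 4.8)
Let me recall what needs proving. We have $D \le F \le G$, and the faithfulness relation has two conditions: (1) every end of the bigger graph contains a ray of the smaller subgraph (surjectivity of ends), and (2) two rays of the smaller subgraph are equivalent there iff they are equivalent in the bigger graph. Note the paper's remark that for a subgraph the "only if" direction of (2) is automatic, so faithfulness really amounts to the surjectivity condition (1) plus the nontrivial direction of (2): rays equivalent in the bigger graph stay equivalent in the smaller.

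Let me think about how I'd organize this. There are three implications to prove: any two of $\{(1)\text{ $D$ faithful to $F$}, (2)\text{ $F$ faithful to $G$}, (3)\text{ $D$ faithful to $G$}\}$ yield the third. Let me examine the structure.

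First I would prove $(1)\wedge(2)\Rightarrow(3)$, which is the "composition" direction and should be the easiest. For the end-surjectivity part: given an end $\alpha$ of $G$, by (2) it contains a ray of $F$, hence an end $\alpha_F$ of $F$ maps into $\alpha$; by (1) that $\alpha_F$ contains a ray of $D$, and this ray lies in $\alpha$, giving surjectivity onto ends of $G$. For the equivalence part: take rays $R_1, R_2$ of $D$ with $R_1 \approx_G R_2$. Since $D \le F \le G$ and $F$ is faithful to $G$, equivalence in $G$ of two $F$-rays forces equivalence in $F$, so $R_1 \approx_F R_2$; then since $D$ is faithful to $F$, $R_1 \approx_D R_2$. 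This is just chaining the two nontrivial directions.

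Next I would do the two "cancellation" directions, which are where the subtlety lives. Consider $(2)\wedge(3)\Rightarrow(1)$: I want $D$ faithful to $F$. For surjectivity, given an end $\alpha_F$ of $F$, push it forward to an end $\alpha$ of $G$ (using $F \le G$); by (3) some $D$-ray lies in $\alpha$, but I must argue it actually lies in $\alpha_F$, not just in $\alpha$ — here I use that $F$ is faithful to $G$, so the end map $\varOmega(F) \to \varOmega(G)$ is a bijection and a $D$-ray equivalent in $G$ to rays of $\alpha_F$ must be $F$-equivalent to them by (2). For the equivalence direction of (1): take $D$-rays with $R_1 \approx_F R_2$; then $R_1 \approx_G R_2$ automatically (subgraph direction), so by (3) $R_1 \approx_D R_2$. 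The symmetric direction $(1)\wedge(3)\Rightarrow(2)$ runs analogously, now deducing surjectivity of $\varOmega(F)\to\varOmega(G)$ and the $F$-equivalence condition from the $D$-level data.

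The main obstacle I anticipate is keeping the end-bijection bookkeeping honest in the cancellation directions: the delicate point is that an end of the smaller graph determines an end of each larger graph via the inclusion-induced map, and faithfulness is exactly what makes these maps bijective, so I must be careful to invoke the correct bijection (e.g. $\varOmega(F)\cong\varOmega(G)$ in the $(2)\wedge(3)$ case) before concluding that a ray "living in $\alpha$" in fact "lives in $\alpha_F$." Everything else is routine chaining of the two equivalence directions, exploiting that the $R_1 \approx_G R_2 \Rightarrow R_1 \approx_F R_2$ type implications compose, while the reverse implications hold automatically for subgraphs. I would present all three implications compactly, emphasizing only the surjectivity-transfer step, since the equivalence-transfer steps are immediate from the definitions.
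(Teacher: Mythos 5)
Your proposal is correct and follows essentially the same route as the paper: the composition direction $(1)\wedge(2)\Rightarrow(3)$ by chaining, and the two cancellation directions by choosing representative rays of the smaller subgraph in each relevant end and transferring equivalence through them, with the key surjectivity-transfer step handled exactly as in the paper's argument. No gaps.
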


\begin{proof}
(1)(2) $\Rightarrow$ (3). Let $\alpha^G$ be an arbitrary end of $G$.
Since $F$ is faithful to $G$, $F$ has a ray $R^F\in\alpha^G$. Let
$\alpha^F$ be the end of $F$ with $R^F\in\alpha^F$. Since $D$ is
faithful to $F$, $D$ has a ray $R^D\in\alpha^F$. Thus
$R^D\approx_FR^F$, implying that $R^D\approx_GR^F$, that is,
$R^D\in\alpha^G$.

Now let $R^D_1,R^D_2$ be two rays of $D$ with $R^D_1\approx_GR^D_2$.
Therefore $R^D_1,R^D_2$ are rays of $F$ as well. Since $F$ is
faithful to $G$, $R^D_1\approx_FR^D_2$. Since $D$ is faithful to
$F$, $R^D_1\approx_DR^D_2$. It follows that $D$ is faithful to $G$.

(1)(3) $\Rightarrow$ (2). Let $\alpha^G$ be an end of $G$. Since $D$
is faithful to $G$, $D$ has a ray $R^D$ contained in $\alpha^G$,
which is also a ray of $F$ since $D\leq F$.

Now let $R^F_1,R^F_2$ be two rays of $F$ with $R^F_1\approx_GR^F_2$.
Let $\alpha^F_i$ be the end of $F$ containing $R^F_i$, and let
$R^D_i$ be a ray of $D$ contained in $\alpha^F_i$, $i=1,2$. It
follows that $R^F_i\approx_FR^D_i$. Since $F\leq G$,
$R^F_i\approx_GR^D_i$. Recall that $R^F_1\approx_GR^F_2$. We have
$R^D_1\approx_GR^D_2$. Since $D$ is faithful to $G$,
$R^D_1\approx_DR^D_2$. Since $D$ is faithful to $F$,
$R^D_1\approx_FR^D_2$. This implies that $R^F_1\approx_FR^F_2$. Thus
$F$ is faithful to $G$.

(2)(3) $\Rightarrow$ (1). Let $\alpha^F$ be an end of $F$. Since $F$
is faithful to $G$, there is an end $\alpha^G$ of $G$ that including
$\alpha^F$. Since $D$ is faithful to $G$, $D$ has an ray $R^D$
contained in $\alpha^G$. Note that $D\leq F$, $R^D$ is also a ray of
$F$. Let $R^F$ be any ray in $\alpha^F$. We have $R^F\in\alpha^G$,
and thus $R^D\approx_GR^F$. Since $F$ is faithful to $G$,
$R^D\approx_FR^F$, and thus $R^D\in\alpha^F$.

Now let $R^D_1,R^D_2$ be two rays of $D$ with $R^D_1\approx_FR^D_2$.
Since $F$ is faithful to $G$, $R^D_1\approx_GR^D_2$. Since $D$ is
faithful to $G$, $R^D_1\approx_DR^D_2$. Thus $D$ is faithful to $F$.
\end{proof}

A \emph{comb} of $G$ is the union of a ray $R$ with infinitely many
disjoint finite paths having precisely their first vertex on $R$;
the last vertices of the paths are the \emph{teeth} of the comb; and
the ray $R$ is the \emph{spine} of the comb. We will use the
following Star-Comb Lemma in our paper.

\begin{lemma}[Diestel \cite{Di16}]\label{LeComb}
If $U$ is an infinite set of vertices in a connected graph, then the
graph contains either a comb with all teeth in $U$ or a subdivision
of an infinite star with all leaves in $U$.
\end{lemma}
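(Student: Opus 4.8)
The plan is to reduce to the case of a tree and then run a rooted branching argument. Since any comb or star-subdivision that lives in a subgraph of $G$ is also one in $G$, and since every connected graph has a spanning tree, I would first fix a spanning tree $T$ of $G$ and prove the statement with $T$ in place of $G$; this loses nothing and gives me the tree order to work with. Root $T$ at an arbitrary vertex $r$, and for a vertex $v$ write $T_v$ for the subtree consisting of $v$ together with all its descendants. Call $v$ \emph{heavy} if $U\cap V(T_v)$ is infinite, and note that $r$ is heavy because $U\subseteq V(T_r)$. Crucially, this setup uses no local finiteness, so the proof will apply to arbitrary connected graphs.

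The dichotomy I would use is on how $U$ branches through $T$: either (Case A) some vertex $v$ has infinitely many children $c_1,c_2,\ldots$ for which each $V(T_{c_i})$ meets $U$, or (Case B) every vertex has only finitely many children whose subtree meets $U$. In Case A I pick $u_i\in U\cap V(T_{c_i})$ and take the path from $v$ to $u_i$ in $T$ for each $i$. Since distinct children $c_i$ lie in distinct components of $T-v$, these paths meet only in $v$, so their union is a subdivision of an infinite star centred at $v$ with all leaves $u_i\in U$. This settles Case A.

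In Case B I would build the comb. First observe that every heavy vertex has a heavy child: if $v$ is heavy then the infinite set $U\cap V(T_v)$ is contained in $\{v\}\cup\bigcup_c\bigl(U\cap V(T_c)\bigr)$ over the children $c$ of $v$, and by the Case-B hypothesis only finitely many of the sets $U\cap V(T_c)$ are non-empty, so one of them must be infinite. Starting from $r$ and repeatedly choosing a heavy child, I obtain a ray $R=v_0v_1v_2\ldots$ of heavy vertices, which serves as the spine. For the teeth, I would show that for infinitely many indices $i$ there is a vertex of $U$ in $V(T_{v_i})\setminus V(T_{v_{i+1}})$; otherwise the sets $U\cap V(T_{v_i})$ would be eventually constant in $i$, contradicting the fact that $\bigcap_i V(T_{v_i})=\emptyset$ (no vertex is a descendant of every $v_i$, since the depths $v_i$ tend to infinity) while each $U\cap V(T_{v_i})$ is infinite. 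For each such $i$ I pick such a $u_i\in U$ and let the $i$-th tooth be the path from $v_i$ to $u_i$ in $T$; this path lies entirely inside $V(T_{v_i})\setminus V(T_{v_{i+1}})$, hence meets $R$ only in its first vertex $v_i$, and for $i<j$ the region $V(T_{v_j})\setminus V(T_{v_{j+1}})\subseteq V(T_{v_{i+1}})$ is disjoint from $V(T_{v_i})\setminus V(T_{v_{i+1}})$, so the teeth are pairwise disjoint finite paths. Their union with $R$ is the required comb with all teeth in $U$.

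The one delicate point, and the step I would treat most carefully, is the extraction of infinitely many teeth in Case B: the heavy ray by itself need not pass through or even near $U$, so one must rule out the degenerate possibility that essentially all of $U$ persists arbitrarily far down the single chosen branch. The nested-intersection argument above is exactly what excludes this. I would also note explicitly that König's Infinity Lemma is \emph{not} needed here, since the heavy ray is produced by a direct greedy choice of heavy children rather than by a compactness argument, which is what keeps the proof valid for graphs that are not locally finite.
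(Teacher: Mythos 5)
Your proof is correct, but note that the paper itself does not prove this statement at all: it is the star--comb lemma, imported directly from Diestel's textbook \cite{Di16}, so the only thing to compare your argument against is the standard textbook proof. Your route is essentially that standard argument, organized cleanly: reduce to a rooted spanning tree $T$, then split according to whether some vertex has infinitely many children whose subtrees meet $U$ (giving the subdivided star) or not (giving the comb). In the comb case your ``heavy child'' descent produces the spine directly, and the nested-slab extraction of teeth is sound: if only finitely many indices $i$ had $U\cap\bigl(V(T_{v_i})\setminus V(T_{v_{i+1}})\bigr)\neq\emptyset$, then an infinite subset of $U$ would lie in $\bigcap_{i\geq N}V(T_{v_i})$, which is empty because every vertex has finite depth while vertices of $T_{v_i}$ have depth at least $i$. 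The disjointness claims also check out: the tooth path from $v_i$ to $u_i$ stays inside $V(T_{v_i})\setminus V(T_{v_{i+1}})$, hence meets the spine only in $v_i$, and distinct teeth live in disjoint slabs. Crucially, nothing in your argument uses local finiteness, as required here (the star alternative cannot be dropped at this level of generality, even though the paper later discards it for its locally finite applications). Two cosmetic points you should make explicit: the existence of a spanning tree of an arbitrary connected graph requires Zorn's lemma, and when $v_i\in U$ your tooth is a trivial path, which is acceptable under the usual convention (explicit in Diestel's formulation) that the comb's paths may be trivial, i.e.\ teeth may lie on the spine; the paper's definition of a comb is consistent with this reading. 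Your closing remark is also accurate: K\"onig's infinity lemma is not needed, and your greedy choice of heavy children is precisely what replaces it, which is what keeps the proof valid for graphs with vertices of infinite degree.
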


Since a locally finite graph $G$ contains no infinite stars, Lemma
\ref{LeComb} always yields a comb of $G$. If $F$ is a connected
spanning subgraph of $G$, then for every ray $R$ of $G$, the spine
$R'$ of a comb of $F$ with all teeth in $V(R)$ is a ray in $F$ with
$R\approx_GR'$. Therefore a connected spanning subgraph $F$ is
faithful to $G$ if and only if for any two rays $R_1,R_2$ of $F$,
$R_1\approx_GR_2$ implies $R_1\approx_FR_2$.

\begin{lemma}\label{LeFaithfulSpanning}
Suppose that $D$ is a connected spanning subgraph of $G$, then for
any graph $F$ with $D\leq F\leq G$, $D$ is faithful to $F$ and $F$
is faithful to $G$.
\end{lemma}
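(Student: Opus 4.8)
The plan is to reduce the whole statement to the single fact that $D$ is faithful to $G$, and then to transfer faithfulness to the intermediate graph $F$ by means of Lemma~\ref{LeFaithfulFaithful}. The starting observation is that, since $D$ is connected and spanning, it is a connected spanning subgraph of both $F$ and $G$; indeed $D\leq F$ forces $F$ itself to be connected and spanning. Hence the criterion recorded in the paragraph just before the lemma applies to each of $D\leq F$, $D\leq G$ and $F\leq G$: for a connected spanning subgraph, condition~(1) of faithfulness is automatic via the Star-Comb Lemma~\ref{LeComb}, and faithfulness is equivalent to the single reverse implication ``$R_1\approx$-equivalent in the bigger graph $\Rightarrow$ equivalent in the smaller one''.

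First I would establish that $D$ is faithful to $G$. Condition~(1) is immediate: given an end $\alpha$ of $G$ with a ray $R\in\alpha$, applying Lemma~\ref{LeComb} to the infinite set $V(R)$ inside the connected graph $D$ produces a comb whose spine $R'$ is a ray of $D$ with $R'\approx_GR$, so $R'\in\alpha$. For condition~(2) it suffices to show that $R_1\approx_GR_2$ implies $R_1\approx_DR_2$ for rays $R_1,R_2$ of $D$. Here I would again invoke the Star-Comb Lemma, now on the infinite set $U=V(R_1)\cup V(R_2)$, and try to argue that the resulting comb in $D$ must pick up teeth from both $V(R_1)$ and $V(R_2)$, so that its spine is $\approx_D$-equivalent to each of $R_1$ and $R_2$, yielding $R_1\approx_DR_2$.

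Once $D$ is known to be faithful to $G$, the remaining two conclusions are short. For $D$ faithful to $F$: condition~(1) holds exactly as above (as $D$ is a connected spanning subgraph of $F$), and for condition~(2) I only need that $R_1\approx_FR_2$ implies $R_1\approx_DR_2$; but $F\leq G$ puts each component of $F-S$ inside a component of $G-S$, so $R_1\approx_FR_2$ gives $R_1\approx_GR_2$, whence $R_1\approx_DR_2$ by the faithfulness of $D$ to $G$. With $D$ faithful to $F$ and $D$ faithful to $G$ both in hand, the assertion that $F$ is faithful to $G$ follows at once from the implication $(1)(3)\Rightarrow(2)$ of Lemma~\ref{LeFaithfulFaithful}, so no separate argument is needed for it.

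The step I expect to be the main obstacle is the reverse equivalence $R_1\approx_GR_2\Rightarrow R_1\approx_DR_2$ used in proving that $D$ is faithful to $G$. The delicate point is that a single application of the Star-Comb Lemma to $V(R_1)\cup V(R_2)$ could in principle return a comb whose teeth lie almost entirely in one of the two rays, which would fail to connect $R_1$ and $R_2$ through $D$; forcing the teeth to be shared between the two rays (or iterating the comb construction so as to produce a common limit) is precisely where the connectivity and spanning hypotheses must be exploited, and it is the heart of the whole lemma.
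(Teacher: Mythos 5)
Your overall reduction is sound, but the step you yourself flagged as ``the main obstacle'' is not a delicate point that can be finessed: it is false. A connected spanning subgraph of $G$ need not be faithful to $G$, so no application of the Star--Comb Lemma (iterated or otherwise) can deliver the implication $R_1\approx_GR_2\Rightarrow R_1\approx_DR_2$. Concretely, let $G$ be the infinite ladder with vertex set $\{u_i,v_i: i\geq 1\}$ and edges $u_iu_{i+1}$, $v_iv_{i+1}$ and $u_iv_i$, and let $D$ be the spanning tree consisting of the two rays $u_1u_2\ldots$ and $v_1v_2\ldots$ together with the single rung $u_1v_1$. Then $D$ is connected and spanning, and the two rays are equivalent in $G$ (the ladder has exactly one end), but deleting $\{u_1,v_1\}$ separates their tails in $D$, so they are inequivalent in $D$. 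Thus $D$ is not faithful to $G$; taking $F=G$ (or $F=D$) this even contradicts the lemma as literally printed, so no blind proof of the literal statement can succeed.

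What resolves the discrepancy is that the paper's own proof never proves ``$D$ is faithful to $G$'': it invokes ``Since $D$ is faithful to $G$'' twice as an assumption. In other words, the lemma is really meant (and is later applied) under the hypothesis that $D$ is a \emph{faithful} connected spanning subgraph of $G$ --- in its applications $D$ is a normal spanning tree, which is faithful by Lemma~\ref{LeNormalFaithful} --- and the printed statement simply omits this hypothesis. Granting it, the rest of your argument is correct and essentially equivalent to the paper's: $F\leq G$ gives $\approx_F\Rightarrow\approx_G$, faithfulness of $D$ to $G$ gives $\approx_G\Rightarrow\approx_D$, hence $D$ is faithful to $F$; and then $F$ faithful to $G$ follows from the implication $(1)(3)\Rightarrow(2)$ of Lemma~\ref{LeFaithfulFaithful}. (The paper proceeds in the opposite order, proving $F$ faithful to $G$ directly by a comb argument and deducing $D$ faithful to $F$ afterwards, but the content is the same.) So the gap in your proposal sits exactly where you suspected, and it cannot be closed by a better argument: that step has to be taken as a hypothesis, not proved.
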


\begin{proof}
Let $R^F_1,R^F_2$ be two rays of $F$ with $R^F_1\approx_GR^F_2$. Let
$\alpha^G$ be the end of $G$ containing $R^F_1,R^F_2$, let
$R^D\in\alpha^G$ be a ray of $D$. By Lemma \ref{LeComb}, $D$ has a
comb with all teeth in $V(R^F_1)$. Let $R^D_1$ be the spine of the
comb. Thus $R^D_1$ is a ray of $D$ and $R^F_1\approx_FR^D_1$. Since
$F\leq G$, $R^F_1\approx_GR^D_1$ and then $R^D\approx_GR^D_1$. Since
$D$ is faithful to $G$, $R^D\approx_DR^D_1$. Since $D\leq F$,
$R^D\approx_FR^D_1$, and then $R^D\approx_FR^F_1$. By a similar
analysis, we have $R^D\approx_FR^F_2$, and thus
$R^F_1\approx_FR^F_2$. This implies that $F$ is faithful to $G$.

Now let $R^D_1,R^D_2$ be two rays of $D$ with $R^D_1\approx_FR^D_2$.
Since $F$ is faithful to $G$, $R^D_1\approx_GR^D_2$. Since $D$ is
faithful to $G$, $R^D_1\approx_DR^D_2$. It follows that $D$ is
faithful to $F$.
\end{proof}

\section{Prisms of 3-connected cubic graphs.}

The prism of a graph $G$ is the Cartesian product $G\square K_2$.
Prisms over 3-connected planar graphs are examples of 4-polytopes.
In 1973, Rosenfeld and Barnette \cite{RoBa} showed that every cubic
planar 3-connected graph has a Hamiltonian prism, under the
assumption of the Four Color Conjecture, which is open at that time.
Fleischner \cite{Fl89} found in 1989 a proof avoiding the the Four
Color Theorem. Eventually, Paulraja \cite{Pa} showed that planarity
is inessential here.

\begin{theorem}[Paulraja \cite{Pa}]\label{ThPa}
If $G$ is a finite 3-connected cubic graph, then $G\square K_2$ is
Hamiltonian.
\end{theorem}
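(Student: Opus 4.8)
The plan is to deduce Hamiltonicity of the prism from a suitable \emph{spanning} subgraph of $G$ and then to assemble the Hamilton cycle layer by layer inside $G\square K_2$. Write the two copies of a vertex $v$ as $v',v''$, joined by the \emph{rung} $v'v''$. Since $G$ is $3$-connected it is bridgeless, so by Petersen's theorem it has a perfect matching $M$; then $F:=E(G)\setminus M$ is a $2$-factor, a disjoint union of cycles $C_1,\dots,C_k$. The basic building block is that for any cycle $C$ and any two prescribed positions on it, the cylinder $C\square K_2$ has a Hamilton cycle using \emph{exactly} the two rungs at those positions: traverse one copy of $C$ from the first position to the second, cross a rung, traverse the other copy back, and cross the second rung. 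This yields, for each $i$, a Hamilton cycle $H_i$ of $C_i\square K_2$ whose two rungs may be placed where convenient.

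The cycles are then stitched together along $M$. \emph{Merging step:} if $vw\in M$ with $v\in C_i$, $w\in C_j$, and $H_i$ uses the rung $v'v''$ while $H_j$ uses $w'w''$, then deleting these two rungs and inserting the two matching copies $v'w'$ and $v''w''$ of $vw$ turns $H_i\cup H_j$ into a single cycle covering both cylinders; indeed $H_i-v'v''$ is a Hamilton path of $C_i\square K_2$ from $v'$ to $v''$, $H_j-w'w''$ a Hamilton path from $w'$ to $w''$, and the two new edges concatenate these into one cycle. To globalise, form the quotient multigraph $\mathcal{Q}$ whose nodes are $C_1,\dots,C_k$ and whose edges are the members of $M$ running between distinct cycles; $\mathcal{Q}$ is connected because $G$ is. Choosing a spanning tree $T$ of $\mathcal{Q}$ and performing one merge along each edge of $T$ would fuse all the $H_i$ into a single Hamilton cycle of $G\square K_2$.

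The difficulty — and the step I expect to be the real work — is that a cylinder cannot host arbitrarily many rungs. In $C_n\square K_2$ every vertex has degree three (two cycle edges and one rung), and a short count shows that leaving a rung unused forces both incident cycle edges at its two endpoints to be used; propagating this, every \emph{connected} Hamilton cycle of $C_n\square K_2$ uses either exactly two rungs (at arbitrary positions) or, when $n$ is even, all $n$ rungs, any intermediate pattern splitting off a proper sub-cycle. Hence a cycle $C_i$ whose degree in $T$ exceeds $2$ cannot supply the required rungs from the two-rung block, and one must either route $M$ so that $T$ has maximum degree $2$ (a Hamilton path of $\mathcal{Q}$, which need not exist) or use the all-rungs Hamilton cycle on the high-degree cylinders, which forces those cycles to be \emph{even}.

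This is exactly where $3$-connectivity must enter, and where I would invest the main effort: to prove that every $3$-connected cubic graph admits a spanning \emph{even cactus} — a connected spanning subgraph $H\le G$ every block of which is a bridge or an even cycle, with the local condition (at each cut vertex of degree $3$, at least one incident block is an even cycle) needed to make the surgery go through. Granting such an $H$, the prism $H\square K_2\subseteq G\square K_2$ is Hamiltonian by induction on the number of blocks: pick a leaf block $B$ meeting the rest of $H$ in a single cut vertex $w$, obtain inductively a Hamilton cycle of the smaller prism that uses the rung $w'w''$, and splice in $B\square K_2$ there — replacing $w'w''$ by the forced path through the new vertices when $B$ is a bridge, or by the all-rungs Hamilton path of $B\square K_2$ from $w'$ to $w''$ when $B$ is an even cycle. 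Since mere bridgelessness is known to be insufficient for the conclusion, the whole weight of the hypothesis rests on this structural existence statement, which I expect to be the hard part of the argument.
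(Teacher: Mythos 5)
Your proposal does not contain a proof: it reduces Paulraja's theorem to the statement that every $3$-connected cubic graph has a spanning even cactus (with a suitable local attachment condition), and then explicitly defers that statement as ``the hard part''. But that structural existence result \emph{is} the theorem, in essence; it is exactly what this paper proves (in its harder, infinite form) in Sections 5.1 and 5.2, first extracting a spanning $2$-connected bipartite subgraph of the $3$-connected cubic graph (Theorem \ref{LeFaithfulBipartite}) and then a spanning cactus of that $2$-connected subcubic subgraph (Theorem \ref{LeFaithfulCactus}) --- the cycles of the cactus are automatically even because the host is bipartite. The splicing of prisms over even cacti that you sketch at the end is the (known) easy half, cited in the paper as Lemma \ref{LeCaKaRoRy} of \v{C}ada et al.; note that the paper itself does not reprove Theorem \ref{ThPa} but cites Paulraja, and its own contribution is precisely the infinite analogue of the structural lemma you left unproved. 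So the architecture you chose mirrors the actual proof, but the mathematical content is missing.

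A secondary, concrete error: your ``basic building block'' is false. For non-adjacent positions $a,b$ on a cycle $C$, the cylinder $C\square K_2$ has \emph{no} Hamilton cycle using exactly the two rungs at $a$ and $b$: a path in one copy of $C$ from $a$ to $b$ necessarily misses every interior vertex of the other $a$--$b$ arc, so the two-rung construction covers all vertices only when $a$ and $b$ are adjacent (the two paths being $C$ minus the edge $ab$, one in each copy). Consequently the parenthetical ``(at arbitrary positions)'' in your case analysis is wrong, and even the maximum-degree-$2$ branch of your first stitching plan (a Hamilton path of $\mathcal{Q}$) fails, since the two matching edges leaving a cycle $C_i$ need not land at adjacent vertices of $C_i$. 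This slip is harmless only because you abandon that plan; but it illustrates why the even-cactus route, which controls exactly where blocks attach to one another, is what is really needed, and why its existence statement carries the full weight of $3$-connectivity.
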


As an application of our main result, we show that Paulraja's
theorem can be extended to infinite graphs.

\begin{theorem}\label{ThCubic}
If $G$ is an infinite 3-connected cubic graph, then $G\square K_2$
has a Hamiltonian circle.
\end{theorem}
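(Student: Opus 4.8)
The plan is to verify condition~(3) of Theorem~\ref{ThMain} for the prism $P=G\square K_2$: I will produce a spanning faithful subgraph $F\le P$ that has a Hamiltonian curve and all of whose ends have degree at most $3$, and then quote Theorem~\ref{ThMain} to obtain the Hamiltonian circle. Several of the needed checks are cheap. Since $K_2$ is finite, $\varOmega(P)=\varOmega(G)$, and if $F$ is \emph{connected} and spanning then Lemma~\ref{LeFaithfulSpanning} (applied with $D=F$, the ambient graph $=P$) shows at once that $F$ is faithful to $P$. By Theorem~\ref{ThKuLiTh}, the Hamiltonian-curve requirement for $F$ reduces to showing that every finite subset of $V(P)$ lies on a finite cycle of $F$, and the end-degree requirement will be read off from Lemma~\ref{LeDegreeEnd}. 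Thus the whole problem is to construct one sufficiently thin, yet sufficiently rich, connected spanning $F$.

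To build $F$ I would discretize with Paulraja's finite theorem (Theorem~\ref{ThPa}). Fix an enumeration $V(G)=\{v_1,v_2,\dots\}$ and a nested exhaustion by finite connected sets $T_1\subseteq T_2\subseteq\cdots$ with $\bigcup_n T_n=V(G)$, arranged so that every component of $G-T_n$ is infinite and the components refine as $n$ grows. For each $n$ I would turn $G$ into a finite $3$-connected cubic graph $G_n$ by \emph{capping}: each component $C$ of $G-T_n$ meets $T_n$ in an edge-cut of size at least $3$ (by $3$-connectivity and Menger), and I replace $C$ by a finite cubic gadget attached along that cut so that $G_n$ stays cubic and $3$-connected; when the cut has size exactly $3$ this is simply the contraction of $C$ to one cubic vertex. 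Paulraja's theorem then gives a Hamilton cycle $\Gamma_n$ of $G_n\square K_2$, whose trace over $T_n$ is a spanning linear forest of $G[T_n]\square K_2$ with its path-ends on the capped boundary. Realizing each cap-segment of $\Gamma_n$ by an actual finite path inside the corresponding component $C$ turns $\Gamma_n$ into a genuine finite cycle $\widetilde\Gamma_n$ of $P$ meeting all of $T_n\times K_2$.

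The limit is extracted with K\"onig's Infinity Lemma (Lemma~\ref{LeKo}): the level-$n$ objects are the finitely many possible traces of the cycles $\widetilde\Gamma_m$ ($m\ge n$) on the fixed finite prism over $T_n$, refined to also record how the segments link through the next shell, with adjacency meaning compatibility of traces; a ray of this tree selects a coherent family of traces whose union $F$ is a spanning subgraph of $P$ built from genuine edges of $P$. Coherence makes the edges near any fixed finite set stabilize, so that the stabilized trace over $T_n$, together with the (eventually fixed) linking pattern, is a finite cycle of $F$ covering $T_n\times K_2$; since every finite $S\subseteq V(P)$ lies in some $T_n\times K_2$, this supplies the cycle through $S$ required by Theorem~\ref{ThKuLiTh}, and it simultaneously forces $F$ to be connected so that Lemma~\ref{LeFaithfulSpanning} applies.

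The hard part is the end-degree bound. An end $\alpha$ of $F$ corresponds to an end of $G$, and by Lemma~\ref{LeDegreeEnd} controlling $d_F(\alpha)$ amounts to bounding, for cofinally many separating sets $T_n$, the number of edges of $F$ crossing the cut around $C(T_n,\alpha)$; each such crossing is a place where the finite cycles $\Gamma_n$ enter the capped region for $\alpha$. Paulraja's theorem gives no control over how often a Hamilton cycle enters a given cap, so a priori the limit could send many rays into $\alpha$, and since a cycle crosses a cut an even number of times the real danger is a cap that is always crossed four or more times (note one cannot hope, e.g. in the hexagonal lattice, to isolate an end by a size-$3$ edge-cut, so the crude ``contract to a cubic vertex'' trick is not always available). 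The crux of the proof is therefore to choose the caps and the Hamilton cycles $\Gamma_n$ \emph{consistently along the exhaustion} so that each end-region is eventually traversed as a single segment, i.e.\ its cut is crossed exactly twice; this is exactly the point where a clever selection (or local rerouting through the rungs, propagated coherently through the K\"onig tree) must be made, and I expect it to be the main technical obstacle. Once every end of $F$ has degree at most $3$, Lemma~\ref{LeEndDegreeCircle} upgrades the Hamiltonian curve of $F$ to a Hamiltonian circle of $F$, and Lemma~\ref{LeFaithfulCircle} (equivalently Theorem~\ref{ThMain}) lifts it to a Hamiltonian circle of $P=G\square K_2$, completing the proof.
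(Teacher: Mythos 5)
Your proposal correctly identifies the target (condition (3) of Theorem~\ref{ThMain}) but leaves the decisive step unproved, and you say so yourself: the end-degree bound for the limit subgraph $F$ is ``the main technical obstacle'' and is only described as something a ``clever selection'' should achieve. This is not a routine detail that can be deferred; it is exactly the difficulty that distinguishes the infinite theorem from Paulraja's finite one. A K\"onig-limit of Paulraja cycles gives no control here: each finite Hamilton cycle $\Gamma_n$ may cross the edge-cut around a fixed end-region any (even) number of times, the number of crossings can grow with $n$, and compatibility of traces does nothing to cap it, so the limit $F$ can have ends of arbitrarily large degree, its Hamiltonian curve can revisit an end, and Theorem~\ref{ThMain}(3) simply does not apply. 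The paper avoids this trap entirely by never taking limits of Hamilton cycles. Instead it builds, by structural (not compactness) arguments, a faithful spanning \emph{even cactus} $F$ of $G$ --- first a faithful spanning $2$-connected bipartite subgraph (Theorem~\ref{LeFaithfulBipartite}), then a faithful spanning cactus of that (Theorem~\ref{LeFaithfulCactus}). For a cactus every end has degree $1$ automatically, so every end of $F\square K_2$ has degree $2$ (Lemmas~\ref{LeDegreeEndPrism} and~\ref{LeCactusHamiltonDegree}), and the Hamiltonian curve of $F\square K_2$ comes from the finite theorem of \v{C}ada et al.\ on prisms of even cacti (Lemma~\ref{LeCaKaRoRy}), not from Paulraja's theorem at all. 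In other words, the paper trades ``Hamilton cycles of finite approximations'' for ``a globally thin faithful subgraph whose prism is trivially well-behaved at the ends''; your plan has no substitute for this.

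There is a second genuine error: you claim that if $F$ is connected and spanning then Lemma~\ref{LeFaithfulSpanning} ``shows at once that $F$ is faithful to $P$.'' Connectedness plus spanning does \emph{not} imply faithfulness, and the lemma does not prove it: its proof explicitly invokes ``since $D$ is faithful to $G$,'' so the intended hypothesis is that $D$ is a connected \emph{faithful} spanning subgraph (the statement as printed omits this word). For a counterexample to your reading, take $G=\mathbb{Z}\times\mathbb{Z}$ (one end) and let $T$ be the spanning tree consisting of the row $\mathbb{Z}\times\{0\}$ together with all vertical columns: $T$ is connected and spanning, but two distinct columns give rays that are equivalent in $G$ and inequivalent in $T$, so $T$ is not faithful. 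Thus faithfulness of your limit $F$ would require its own argument, and this is again nontrivial: it is precisely what the paper's Lemmas~\ref{LeFaithfulSubgraph1}, \ref{LeFaithfulSubgraph2} and \ref{LeFaithfulSubgraph3} are engineered for --- the construction must guarantee that every component of $G$ minus a finite layer sends all its neighbors into a single connected piece of the next layer, a property your capped approximations do not obviously deliver.
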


\subsection{From 3-connected cubic graphs to 2-connected bipartite graphs}

In this subsection, we will show that every 3-connected cubic graph
has a faithful spanning 2-connected bipartite subgraphs.

Let $G$ be a graph and $S\subseteq V(G)$. We say that $S$ is
$k$-connected in $G$ if each two vertices in $S$ are connected by
$k$ internally disjoint paths of $G$; and $S$ is $k$-edge-connected
in $G$ if each two vertices in $S$ are connected by $k$
edge-disjoint paths of $G$. We remark that for the case of $G$ being
subcubic, $S$ is $k$-connected in $G$ if and only if $S$ is
$k$-edge-connected in $G$, for $k=1,2,3$.

\begin{lemma}\label{LeCoBiSu}
Let $G$ be a subcubic graph, $S$ be a finite subset of $V(G)$ and
$D$ be a finite 2-connected bipartite subgraph of $G$. If $S\cap
V(D)\neq\emptyset$ and $S$ is 3-connected in $G$, then $G$ has a
finite 2-connected bipartite subgraph $F$ with $D\leq F$ and
$S\subseteq V(F)$.
\end{lemma}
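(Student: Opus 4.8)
The plan is to induct on $|S\setminus V(D)|$. If $S\subseteq V(D)$ we simply take $F=D$. Otherwise the key step is to absorb a single vertex: given a finite $2$-connected bipartite subgraph $D$ with some $t\in S\cap V(D)$ and a chosen $s\in S\setminus V(D)$, I would construct a finite $2$-connected bipartite subgraph $F'$ with $D\leq F'$ and $s\in V(F')$. Since $S$ is $3$-connected in $G$ and $s,t\in S$, there are three internally disjoint $s$--$t$ paths $Q_1,Q_2,Q_3$. For each $i$ let $R_i$ be the initial segment of $Q_i$ from $s$ up to the first vertex $a_i$ lying in $V(D)$; then the $R_i$ are internally disjoint from $D$ and pairwise meet only in $s$.

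The first point to establish is that $a_1,a_2,a_3$ are \emph{distinct}. Because the interiors of the $Q_i$ are disjoint, two of the $a_i$ can coincide only if both equal $t$. Suppose $a_i=a_j=t$ for some $i\neq j$. Then the final edges of $R_i$ and $R_j$ are two distinct edges joining $t$ to vertices outside $V(D)$, while $D$ being $2$-connected forces $\deg_D(t)\geq 2$, i.e.\ at least two further edges at $t$ inside $D$. This gives $\deg_G(t)\geq 4$, contradicting that $G$ is subcubic. Hence at most one $a_i$ equals $t$, and the three vertices $a_i$ are pairwise distinct.

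The main obstacle is maintaining \emph{bipartiteness} when attaching $s$, and this is where the three paths are needed. Fix a proper $2$-colouring $c\colon V(D)\to\{0,1\}$ and set $\epsilon_i=c(a_i)\oplus(|R_i|\bmod 2)$ for each $i$. For two indices $i\neq j$ the ear $R_i\cup R_j$ is an $a_i$--$a_j$ path through $s$ whose interior is disjoint from $D$; colouring $s$ and extending along the two paths is consistent with $c$ precisely when $\epsilon_i=\epsilon_j$, since each path forces the colour of $s$ to be $\epsilon_i$ respectively $\epsilon_j$. As $\epsilon_1,\epsilon_2,\epsilon_3\in\{0,1\}$, the pigeonhole principle yields indices $i\neq j$ with $\epsilon_i=\epsilon_j$, and by the previous paragraph $a_i\neq a_j$. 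Thus $F'=D\cup(R_i\cup R_j)$ admits a proper $2$-colouring.

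Finally, $F'$ arises from the $2$-connected graph $D$ by adding an ear between two \emph{distinct} vertices $a_i,a_j$ whose interior consists of new vertices (including $s$), so $F'$ is again $2$-connected, and it is clearly finite. Now $S\cap V(F')$ contains $(S\cap V(D))\cup\{s\}$, so $|S\setminus V(F')|<|S\setminus V(D)|$, and applying the induction hypothesis to $F'$ (which together with $S$ satisfies all the hypotheses, $S$ being unchanged) produces the desired $F$. The crux is the interplay of the two assumptions: $3$-connectedness of $S$ supplies three paths so that a parity match is forced by pigeonhole, while subcubicity combined with $2$-connectedness of $D$ guarantees that the matched pair lands at two distinct vertices of $D$, so that a genuine ear is added rather than a cycle hung at a single cut vertex.
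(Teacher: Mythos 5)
Your proof is correct and takes essentially the same approach as the paper: the paper chooses $F\supseteq D$ maximal with respect to the number of vertices of $S$ it contains and absorbs a missing vertex $v$ by attaching two of three paths from $v$ to $F$, which is exactly your induction step in extremal form. Your write-up additionally spells out the two points the paper's phrase ``we can add two of the paths'' leaves implicit, namely the pigeonhole/parity argument guaranteeing a bipartiteness-preserving ear and the use of subcubicity (together with $\deg_D(t)\geq 2$) to get distinct attachment points.
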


\begin{proof}
We let $F$ be a finite 2-connected bipartite subgraph of $G$ with
$D\leq F$ and $F$ contains vertices of $S$ as many as possible. We
will show that $F$ contains all vertices in $S$. Since $S\cap
V(D)\neq\emptyset$, we have $S\cap V(F)\neq\emptyset$. Assume that
$S\backslash V(F)\neq\emptyset$. Let $u\in S\cap V(F)$ and $v\in
S\backslash V(F)$. Since $S$ is 3-connected in $G$, there are 3
internally disjoint paths between $u$ and $v$. It follows that there
are three paths $P_1,P_2,P_3$ from $v$ to $F$ such that they have
the only vertex $v$ in common. Thus we can add two of the paths to
$F$ to obtain a 2-connected bipartite graph containing more vertex
in $S$ than $F$, a contradiction.
\end{proof}

\begin{lemma}\label{LeComponentBipartite}
Let $G$ be a subcubic graph, and $\mathcal{U}$ be a finite class of
finite subsets of $V(G)$. Suppose that each $U\in\mathcal{U}$ is
3-connected in $G$. Then $G$ has a finite subgraph $F$ such that\\
(1) every subset $U\in\mathcal{U}$ is contained in one component of
$F$; and \\
(2) every component of $F$ is either an isolated vertex or a
2-connected bipartite graph.
\end{lemma}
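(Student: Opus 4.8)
The plan is to bootstrap from Lemma~\ref{LeCoBiSu} to the multi-set version by processing the sets in $\mathcal{U}$ one at a time. First I would note that each $U\in\mathcal{U}$ with $|U|\geq 2$, being $3$-connected in $G$, sits inside some finite $2$-connected bipartite subgraph of $G$: indeed, since $U$ is $3$-connected, any two vertices $u,v\in U$ are joined by three internally disjoint paths of $G$, and the union of any two of them is a cycle, hence a finite $2$-connected bipartite subgraph $D_0$ (it is a cycle, so it is bipartite provided its length is even; if it is odd I would instead take three paths and use two of the resulting three cycles — at least one of them is even, or else combine paths to adjust parity). This gives a ``seed'' $2$-connected bipartite $D_0$ meeting $U$, after which Lemma~\ref{LeCoBiSu} applied with $S=U$ absorbs all of $U$ into a single finite $2$-connected bipartite subgraph.

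The main construction then proceeds inductively over an enumeration $U_1,U_2,\ldots,U_m$ of the members of $\mathcal{U}$ (the class is finite, so this terminates). I would maintain a finite subgraph $F_t$ whose non-trivial components are $2$-connected and bipartite, such that $U_1,\ldots,U_t$ are each contained in a single component. To add $U_{t+1}$: if $U_{t+1}$ is disjoint from $V(F_t)$, create a fresh seed $D_0$ inside $U_{t+1}$ as above and apply Lemma~\ref{LeCoBiSu} with $D=D_0$, $S=U_{t+1}$ to get a new $2$-connected bipartite component containing all of $U_{t+1}$. If instead $U_{t+1}$ meets some existing component $C$ of $F_t$, then $C$ is already a finite $2$-connected bipartite subgraph with $C\cap U_{t+1}\neq\emptyset$, so Lemma~\ref{LeCoBiSu} applies directly with $D=C$, $S=U_{t+1}$, enlarging $C$ to a $2$-connected bipartite subgraph that swallows $U_{t+1}$. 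In either case I set $F_{t+1}$ to be $F_t$ with the affected component replaced by the enlarged one. Since each application of Lemma~\ref{LeCoBiSu} only enlarges a component and never destroys $2$-connectedness or bipartiteness, conditions (1) and (2) are preserved, and $F:=F_m$ works once isolated vertices (the singleton members of $\mathcal{U}$, or leftover vertices) are adjoined as isolated-vertex components.

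The subtle point I expect to be the main obstacle is the interaction between components when $U_{t+1}$ meets \emph{two or more} distinct components $C,C'$ of $F_t$ simultaneously: a single invocation of Lemma~\ref{LeCoBiSu} only guarantees a $2$-connected bipartite graph $F$ with $C\leq F$, but there is no a priori reason the growth started from $C$ will also engulf $C'$ into the same $2$-connected block while keeping everything bipartite. I would handle this by first merging: since $U_{t+1}$ is $3$-connected in $G$ and meets both $C$ and $C'$, there are internally disjoint $G$-paths linking a vertex of $C$ to a vertex of $C'$ through $U_{t+1}$, so I can first form a single finite $2$-connected bipartite graph $D^\ast$ containing $C\cup C'$ (taking two disjoint connecting paths and checking parity, rerouting through $G$ to fix bipartiteness if a cycle of odd length would otherwise appear), and only then apply Lemma~\ref{LeCoBiSu} with $D=D^\ast$ and $S=U_{t+1}$. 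The recurring technical burden throughout is the \emph{parity control}: every time I build or merge using paths of $G$, I must ensure the resulting graph is bipartite, which in a subcubic graph I can arrange because the three internally disjoint paths guaranteed by $3$-connectedness give enough freedom to select a pair whose union closes up to even cycles.
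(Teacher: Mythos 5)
Your first two steps are sound: the seed construction works (among the three cycles formed by pairs of the three internally disjoint $u$--$v$ paths, two of the paths have lengths of the same parity, so at least one of the cycles is even), and a single application of Lemma~\ref{LeCoBiSu} then absorbs all of $U$. The gap is in the inductive step. Lemma~\ref{LeCoBiSu} gives you \emph{no control over which vertices the enlarged subgraph passes through}: when you grow a component to swallow $U_{t+1}$, the resulting $2$-connected bipartite graph may intersect other, previously finished components $C'$ of $F_t$ (even when $U_{t+1}$ itself is disjoint from them). If the overlap is a single vertex you create a cut vertex; if it is larger, the two already-fixed bipartitions need not agree and an odd cycle can appear. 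So ``replace the affected component by the enlarged one'' is not well defined, and condition (2) is not preserved by your update.

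You correctly identify the multi-component collision as the main obstacle, but your proposed fix --- merge $C$ and $C'$ first via two disjoint paths, ``rerouting through $G$ to fix bipartiteness'' --- is exactly the step that needs a proof and does not obviously have one. Once $C$ and $C'$ are built their bipartitions are frozen, so the parity of the cycle created by two connecting paths is forced by where they attach; the three internally disjoint paths between a vertex of $U_{t+1}\cap V(C)$ and one of $U_{t+1}\cap V(C')$ need not yield three disjoint $C$--$C'$ connectors with distinct attachment vertices on both sides (which is what a pigeonhole parity argument would require), and in a subcubic graph there is essentially no slack to reroute. The paper avoids this entirely with a different architecture: it takes a \emph{maximum} $2$-connected bipartite subgraph $D$ first, uses maximality together with subcubicity to show that every component $H$ of $G-D$ sends at most two edges to $D$, and then recurses on each $H$ separately via a minimal counterexample; maximality again forces each recursive solution to stay inside its own $H$, so distinct pieces can never collide. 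To salvage your one-set-at-a-time scheme you would need a comparable global maximality argument, at which point you have essentially reconstructed the paper's proof.
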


\begin{proof}
We first deal with the case that $G$ is finite. Let $G$ be a
counterexample as small as possible. If $G$ has no even cycle, then
each two vertices of $G$ are not 3-connected in $G$, implying that
all subset $U\in\mathcal{U}$ are singleton. Thus we can take $F$ as
an empty graph on $V(G)$. Now we assume that $G$ contains an even
cycle.

Let $D$ be a 2-connected bipartite subgraph of $G$ with order as
large as possible. By Lemma \ref{LeCoBiSu}, for every
$U\in\mathcal{U}$, if $U\cap V(D)\neq\emptyset$, then $U\subseteq
V(D)$. Let $H$ be an arbitrary component of $G-D$.

Since $D$ is 2-connected, each vertex of $D$ has at most one
neighbor in $H$. If $|E_G(H,D)|\geq 3$, then there are three paths
from some vertex $v\in V(H)$ to $D$ such that they have the only
vertex $v$ in common. Thus we can add two of the paths to $D$ to
obtain a 2-connected bipartite subgraph of $G$ larger than $D$, a
contradiction. Thus we conclude that $|E_G(H,D)|\leq 2$. Specially,
any set $U\in\mathcal{U}$ cannot contain vertices from distinct
components of $G-D$.

Set $\mathcal{U}_H=\{U\in\mathcal{U}: U\subseteq V(H)\}$. If
$|E_G(H,D)|\leq 1$, then let $H'=H$; if $|E_G(H,D)|=2$, saying
$E_G(H,D)=\{uu',vv'\}$ with $u,v\in V(H)$ and $u',v'\in V(D)$, then
let $P$ be a path of $D$ from $u'$ to $v'$, and $H'=H\cup
P\cup\{uu',vv'\}$. It follows that every subset in $\mathcal{U}_H$
is 3-connected in $H'$. By the minimality of $G$, $H'$ has a
subgraph $F_H$ such that every subset in $\mathcal{U}_H$ is
contained in one component of $F_H$, and every component of $F_H$ is
either an isolated vertex or a 2-connected bipartite graph. If
$|E_G(H,D)|=2$ and $P$ is contained in an even cycle of $H'$, then
we can get a 2-connected bipartite subgraph of $G$ larger than $D$,
a contradiction. This implies that $F_H$ is contained in $H$ (with
possibly some isolated vertices in $V(P)$). Now
$$F=D\cup\bigcup\{F_H: H \mbox{ is a component of } G-D\}$$
is a subgraph of $G$ satisfying the requirement.

Now we consider the case that $G$ is infinite. For each two vertices
$u,v$ that contained in a common subset in $\mathcal{U}$, we choose
three paths $P_1^{uv},P_2^{uv},P_3^{uv}$ of $G$ connecting $u$ and
$v$. Let $G'$ be the graph consists of all vertices in
$\bigcup\mathcal{U}$ and all the chosen paths as above. Then $G'$ is
finite and each $U\in\mathcal{U}$ is 3-connected in $G'$ as well.
Thus $G'$ contains a subgraph $F$ satisfying the requirement.
\end{proof}

Let $\mathcal{U}$ be a partition of the vertex set of a graph $G$.
The \emph{quotient graph} $G/\mathcal{U}$ is the multi-graph on
$\mathcal{U}$ such that for each $U_1,U_2\in\mathcal{U}$, $U_1U_2\in
E(G/\mathcal{U})$ if and only if $E_G(U_1,U_2)\neq\emptyset$, and
the multiplicity of $U_1U_2$ is the number of edges in
$E_G(U_1,U_2)$. We call a subdivision of a $K_{1,3}$ a
\emph{$Y$-graph}; and a \emph{$\varTheta$-graph} is a 2-connected
(multi-)graph with two vertices of degree 3 and all other vertices
of degree 2.

\begin{lemma}\label{LeConnectedNeighborhood}
Let $G$ be a 3-connected cubic graph, and $S$ be a finite subset of
$V(G)$. Then there is a finite subset $T$ of $V(G)$ such that $S\cup
N_G(S)\subseteq T$ and for every component $H$ of $G-T$, $N_G(H)$ is
3-connected in $G-S$.
\end{lemma}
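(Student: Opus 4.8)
The plan is to phrase everything in terms of edge-connectivity. Since $G$ is subcubic, two vertices lie in a common set that is $3$-connected in $G-S$ exactly when they are joined by $3$ edge-disjoint paths in $G-S$, i.e.\ cannot be separated in $G-S$ by an edge set of size at most $2$ (as remarked just before Lemma~\ref{LeCoBiSu}); and being so joined is an equivalence relation on $V(G)\setminus S$. Call its classes the \emph{$3$-edge-connected classes} of $G-S$, and call an edge of $G-S$ \emph{mixed} if its two ends lie in different classes. Put $W=N_G(S)$. The key observation is that \emph{every edge cut of $G-S$ of size at most $2$ has each of its two sides incident with $S$}: if a side $A$ had no edge to $S$, then in $G$ the only edges leaving $A$ would be the at most two cut edges, contradicting that a $3$-connected cubic graph is $3$-edge-connected. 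Since $G$ is locally finite and cubic, at most $3|S|$ edges join $V(G)\setminus S$ to $S$, so only finitely many classes are incident with $S$; call these the \emph{anchored} classes.

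I will show that there are only finitely many mixed edges; grant this for now and let $M$ be the finite set of their ends. Set $T=S\cup W\cup M$. Then $T$ is finite and contains $S\cup N_G(S)$, as required. For a component $H$ of $G-T$, every edge of $H$ is non-mixed (both ends of a mixed edge lie in $M\subseteq T$), so $H$, being connected, meets exactly one class $X$; and $N_G(H)\subseteq X$, since a neighbour $a$ of $H$ lying outside $X$ would create a mixed edge from $a$ to some $h\in V(H)$, forcing the impossible $h\in M\subseteq T$. Thus $N_G(H)$ lies in a single $3$-edge-connected class and is therefore $3$-connected in $G-S$.

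It remains to bound the number of mixed edges, which is the crux. Since any two distinct classes span at most two edges of $G-S$ (three or more edges between them, together with the internal $3$-edge-connectivity of each, would make every vertex of one $3$-edge-connected to every vertex of the other, so the classes would coincide), it suffices to show that there are only finitely many classes. Suppose not. Form the quotient multigraph $Q$ obtained by contracting each class to a point; it is connected on each of the finitely many components of $G-S$, so some component-quotient is infinite. Every edge cut of $Q$ of size at most $2$ is an edge cut of $G-S$ of the same size, hence by the key observation has each side containing an anchored class. Since there are only finitely many anchored classes, I would deduce, by applying K\"onig's Infinity Lemma (Lemma~\ref{LeKo}) to the levels of $Q$ measured from the finite anchored set, that $Q$ is both locally finite and free of rays, and hence finite, a contradiction. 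Concretely, an infinite quotient must yield either a class with infinitely many pairwise-separated neighbouring classes or an infinite path of classes; in either case one extracts a cut of $G-S$ of size at most $2$ one of whose sides contains no anchored class, contradicting the key observation.

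The expected main obstacle is precisely this finiteness step: organising the size-at-most-$2$ separations of the infinite graph $G-S$ into its $3$-edge-connected decomposition and verifying, in the locally finite infinite setting, that propagating the key observation through K\"onig's Lemma forces both local finiteness of $Q$ and the absence of rays. Everything else, namely the reduction to edge-connectivity, the key observation, and the verification that $T=S\cup N_G(S)\cup M$ meets the requirements, is routine once finiteness of the set of mixed edges is in hand.
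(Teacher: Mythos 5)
Your framework is the same as the paper's: partition $V(G)\setminus S$ into the $3$-edge-connectivity classes of $G-S$, reduce the whole lemma to showing that there are only finitely many classes, and then take a finite $T$ absorbing $S$, $N_G(S)$ and all between-class adjacencies. The parts you call routine are indeed correct: the key observation, the bound of two edges between any two distinct classes, and the verification that $T=S\cup N_G(S)\cup M$ works (the paper instead takes $T=S\cup\bigcup_{U\in\mathcal{U}}Z_G(U)$, an immaterial difference). But the crux --- finiteness of the set of classes --- is exactly what you do not prove, and your sketch of it cannot be completed as written. First, the appeal to K\"onig's Infinity Lemma is inverted: Lemma \ref{LeKo} \emph{produces} a ray in an infinite leveled graph; it cannot ``deduce that $Q$ is locally finite and free of rays''. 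The usable statement is the contrapositive, via the Star--Comb Lemma (Lemma \ref{LeComb}): if $Q$ were infinite, it would contain a vertex of infinite degree or a ray, and each alternative must then be refuted by hand.

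Second, and decisively, refuting those two alternatives requires a structural fact about $Q$ that you never establish and that does not follow from what you have proved: the paper's Claim that any two \emph{distinct} classes $U_1,U_2$ are separated, \emph{as whole sets}, by a single edge cut of $G-S$ of size at most $2$ (proved by rerouting any putative connection through the $3$-edge-connectedness inside each class). From this the paper deduces that $Q$ contains no $\varTheta$-graph, hence every block of $Q$ is a $K_1$, a $K_2$ or a cycle, with edge multiplicity at most $2$. That block structure is what drives both cases: for an infinite-degree class $U_0$ it forces $Q-U_0$ to have infinitely many components, each attached to $U_0$ by at most $2$ edges --- your phrase ``infinitely many \emph{pairwise-separated} neighbouring classes'' quietly assumes this, whereas the Star--Comb Lemma only gives a subdivided star, not pairwise separation; for a ray it supplies the $\leq 2$-edge cuts of $G-S$ that chop off tails, which is what ``one extracts a cut of size at most $2$'' needs. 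Your weaker multiplicity bound genuinely does not suffice: everything you have established (the key observation, at most two edges between classes, finitely many anchored classes) is consistent with $Q$ being, say, the grid on $\mathbb{N}\times\mathbb{N}$ with the corner and one further vertex anchored --- an infinite connected quotient, full of rays, in which every edge cut of size at most $2$ does have both sides anchored. Such a quotient cannot actually occur, but only because of the separation Claim you are missing; hence no argument from your stated facts alone can reach the desired contradiction, and the heart of the lemma remains unproved.
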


\begin{proof}
Since $G-S$ is subcubic, two vertices are connected by 3
edge-disjoint paths if and only if they are connected by 3
internally-disjoint paths. We define an equivalence relation on
$V(G)\backslash S$ such that for any two vertices $u,v\in
V(G)\backslash S$, $u\sim v$ if and only if $u,v$ are 3-connected in
$G-S$ (i.e., $u,v$ can not be separated by an edge-cut of size at
most 2). Let $\mathcal{U}$ be the quotient set of $V(G)\backslash S$
by the equivalence relation.

We claim that for each two equivalence classes
$U_1,U_2\in\mathcal{U}$, $G-S$ has an edge-cut of size at most 2
separating $U_1$ and $U_2$. Let $u_1\in U_1,u_2\in U_2$, and let $M$
be an edge-cut of size at most 2 separating $u_1$ and $u_2$. Suppose
that there are $u'_1\in U_1,u'_2\in U_2$ that are not separated by
$M$. Since $u_i,u'_i$ are 3-connected in $G-S$, $i=1,2$, $u_i,u'_i$
are connected in $G-S-M$, implying that $u_1,u_2$ are not separated
by $M$ in $G-S$, a contradiction. Thus $M$ is an edge-cut of $G-S$
separating $U_1$ and $U_2$.

We now show that $\mathcal{U}$ is finite. Let
$\mathcal{G}=(G-S)/\mathcal{U}$ be the quotient graph. We have that
$\mathcal{G}$ contains no $\varTheta$-graph; otherwise the two
equivalence classes corresponding the vertices of degree 3 in the
$\varTheta$-graph cannot be separated by an edge-cut of $G-S$ of
size at most 2. It follows that each block of $\mathcal{G}$ is a
$K_1$, a $K_2$ or a cycle. Specially, the multiplicity of every edge
of $\mathcal{G}$ is at most 2.

Let $\mathcal{H}$ be a component of $\mathcal{G}$. If $\mathcal{H}$
is infinite, then by Lemma \ref{LeComb}, $\mathcal{H}$ has either an
infinite star, or a ray. If $\mathcal{H}$ has an infinite star, say
with center $U_0\in\mathcal{U}$. Then $\mathcal{H}-U_0$ has infinite
number of components. Since $S$ is finite, there is a component
$\mathcal{L}$ of $\mathcal{H}-U_0$ such that $E_G(S,\bigcup_{U\in
V(\mathcal{L})}U)=\emptyset$. It follows that $S$ and $\bigcup_{U\in
V(\mathcal{L})}U$ are separated by an edge-cut of $G$ of size at
most 2, a contradiction. If $\mathcal{G}$ has a ray, say
$\mathcal{R}=U_1U_2\ldots$, then there is a tail $\mathcal{T}$ of
$\mathcal{R}$ such that $E_G(S,\bigcup_{U\in
V(\mathcal{T})}U)=\emptyset$. It follows that $S$ and $\bigcup_{U\in
V(\mathcal{T})}U$ are separated by an edge-cut of $G$ of size at
most 2, also a contradiction. Thus we conclude that $\mathcal{H}$ is
finite. Clearly $\mathcal{G}$ has finite number of components,
implies $\mathcal{G}$ is finite, and so is
$\mathcal{U}=V(\mathcal{G})$.

For every equivalence class $U\in\mathcal{U}$, $U$ has at most 2
neighbors in each equivalence class
$U'\in\mathcal{U}\backslash\{U\}$, and has finitely number of
neighbors in $S$. This implies that $Z_G(U)$ is finite. Now let
$$T=S\cup\bigcup_{U\in\mathcal{U}}Z_G(U).$$ Clearly $N_G(S)\subseteq T$
and every component $H$ of $G-T$ is contained in $I_G(U)$ for some
$U\in\mathcal{U}$, implying that $N_G(H)$ is 3-connected in $G-S$.
\end{proof}

In the following, we write $F\unlhd G$ if $F$ is a spanning subgraph
of $G$; write $F\leq^FG$ if $F$ is a faithful subgraph of $G$; and
write $F\unlhd^FG$ if $F$ is a faithful spanning subgraph of $G$.

\begin{lemma}\label{LeConnectedBipartite}
Let $G$ be a 3-connected cubic graph, and $D$ be a finite subgraph
of $G$ each component of which is either an isolated vertex or a
2-connected bipartite graph. Then $G$ has a finite 2-connected
bipartite subgraph $F$ with $D\leq F$.
\end{lemma}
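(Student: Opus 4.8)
The plan is to reduce everything to Lemma~\ref{LeCoBiSu}, whose only limitation is that it must start from a single $2$-connected bipartite subgraph. Since $G$ is $3$-connected, any two vertices are joined by three internally disjoint paths, so \emph{every} finite subset of $V(G)$ is $3$-connected in $G$; hence once all of $D$ has been placed inside one $2$-connected bipartite subgraph $D'$, a single application of Lemma~\ref{LeCoBiSu} with $S=V(D)$ finishes the job. The real content is therefore to amalgamate the components of $D$ into one $2$-connected bipartite graph \emph{while keeping every edge of $D$}. Write the components of $D$ as the $2$-connected bipartite ones $D_1,\dots,D_k$ together with finitely many isolated vertices. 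If $k=0$ I first build a seed: a $3$-connected cubic graph is $2$-connected and is not a single cycle, so it contains a subdivision of the theta graph, whose three defining paths have lengths $a,b,c$; since $a+b,\,b+c,\,c+a$ cannot all be odd, $G$ contains an even cycle, a $2$-connected bipartite subgraph to play the role of $D_1$.

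The engine for amalgamation is a parity argument in the spirit of the proof of Lemma~\ref{LeCoBiSu}. Let $A,B$ be disjoint $2$-connected bipartite subgraphs; each has at least four vertices, so no two vertices separate $A$ from $B$ in the $3$-connected graph $G$, and by Menger's Theorem there are three vertex-disjoint $A$--$B$ paths $Q_1,Q_2,Q_3$, internally disjoint from $A\cup B$, with distinct endpoints $a_i\in V(A)$, $b_i\in V(B)$. Fixing proper $2$-colourings $c_A,c_B$ and setting $\delta_i\equiv c_A(a_i)+c_B(b_i)+|Q_i|\pmod 2$, two of the $\delta_i$ coincide, say $\delta_1=\delta_2$. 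Then, after possibly swapping the colour classes of $B$, the graph $A\cup B\cup Q_1\cup Q_2$ has a proper $2$-colouring (equivalently, the cycle through $Q_1,Q_2$ is even), so it is bipartite; as $A,B$ are $2$-connected and attached at two distinct vertices on each side, it is also $2$-connected. Starting from $D_1$ and absorbing $D_2,\dots,D_k$ one at a time in this way produces a single finite $2$-connected bipartite $D'$ containing every edge of $D_1\cup\cdots\cup D_k$.

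It remains to absorb the isolated vertices. The set $S=V(D')\cup\{\text{isolated vertices of }D\}$ is finite, is $3$-connected in $G$, and meets $V(D')$, so Lemma~\ref{LeCoBiSu}, applied with $D'$ as the starting $2$-connected bipartite subgraph, yields a finite $2$-connected bipartite $F$ with $D'\le F$ and $S\subseteq V(F)$. Then $D_1\cup\cdots\cup D_k\le D'\le F$ while every isolated vertex of $D$ lies in $V(F)$, so $D\le F$, as required.

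The main obstacle is preserving bipartiteness through the amalgamation: a single connecting path can always be made parity-consistent, but $2$-connectivity forces two connections, and two fixed paths may close an odd cycle—the three-paths-and-pigeonhole step is exactly what defeats this. A subtler technical point is that the paths $Q_1,Q_2$ used to merge $A$ with $D_j$ might run through a not-yet-merged component $D_\ell$ and impose colours there clashing with the forced bipartition of $D_\ell$ (producing new odd cycles once $D_\ell$'s edges are reinstated). I expect this to be the genuinely delicate step. The safest route is to keep the invariant that each $D_\ell$ is either wholly contained in the current subgraph or vertex-disjoint from it, which forces the merging paths to avoid $V(D_\ell)$; since deleting a finite vertex set from $G$ need not preserve $3$-connectivity, this avoidance must be argued rather than assumed, and I would secure it by first passing, in the manner of the finite case of Lemma~\ref{LeComponentBipartite}, to a finite subgraph of $G$ in which the relevant sets remain $3$-connected and all merges can be carried out simultaneously under a single global $2$-colouring.
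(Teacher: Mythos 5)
Your merging gadget is correct as far as it goes: for two \emph{disjoint} $2$-connected bipartite subgraphs $A,B$ of a $3$-connected cubic graph, Menger's theorem does give three internally disjoint $A$--$B$ paths, the pigeonhole on the parities $\delta_i$ does let two of them be added so that $A\cup B\cup Q_1\cup Q_2$ is bipartite (after possibly flipping $c_B$) and $2$-connected, and the even-cycle seed as well as the final application of Lemma~\ref{LeCoBiSu} to sweep up the isolated vertices are fine. But the issue you relegate to your closing paragraph is not a technical refinement --- it is the whole problem, and your proposal does not resolve it. Nothing forces the Menger paths joining $A$ to $D_j$ to avoid the still-unabsorbed components $D_\ell$; once a path runs through some $D_\ell$, your invariant is already violated, the colours imposed on $V(D_\ell)\cap(Q_1\cup Q_2)$ need not agree with either bipartition of $D_\ell$, and odd cycles can become unavoidable when the edges of $D_\ell$ must later be reinstated. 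Nor can the avoidance be ``argued'': it can genuinely fail, since $D$ may contain a component that separates two others in $G$ (think of nested even cycles in a planar $3$-connected cubic graph), so that \emph{every} path between those two meets the third. Your proposed repairs --- keep the invariant, pass to a finite subgraph in the manner of Lemma~\ref{LeComponentBipartite}, ``carry out all merges simultaneously under a single global $2$-colouring'' --- are statements of what would need to be proved, not proofs; in particular, passing to a finite ambient subgraph changes nothing, because inside it the paths between two components may still be forced through a third.

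What fills this hole in the paper is a mechanism your proposal lacks. The paper first \emph{enlarges} $D$: by Lemmas~\ref{LeConnectedNeighborhood} and~\ref{LeComponentBipartite} it places additional disjoint pieces (isolated vertices or $2$-connected bipartite graphs) inside $G-D$, so that afterwards every component of the complement has its entire neighbourhood inside a single component of the enlarged graph. This makes the quotient multigraph obtained from the \emph{induced} subgraph $G[V(D)]$ by contracting each component $3$-edge-connected; that is, distinct components are now joined by enough \emph{direct} edges of $G$, and no connection ever has to travel through a third component. A $\varTheta$-subgraph of this quotient, lifted back into $G$ by replacing each contracted component with a path or a $Y$-graph inside it, yields an even cycle $C$ meeting each component in a single path (non-trivial, because in a cubic graph a vertex of a $2$-connected component sends at most one edge out of that component). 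For such a $C$, bipartiteness of $D\cup C$ is automatic --- the cycle space of the merged component is generated by the even cycles of the components together with the even cycle $C$ --- so no parity coordination between components is needed at all, and induction on the number of components finishes. In short, your pigeonhole trick solves the two-component problem in isolation; the substance of the lemma is manufacturing connections that never cut through the components being preserved, and that is precisely the step your proposal leaves open.
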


\begin{proof}
We first deal with the case that for every component $H$ of $G-D$,
$N_G(H)$ is contained in a non-trivial component of $D$. For this
case we will show that $G$ has a finite 2-connected bipartite
subgraph $F$ with $D\unlhd F$.

Suppose that the assertion is not true, and that $D$ is a
counterexample with smallest number of components. If $D$ has only
one component, then $F=D$ satisfies the requirement. So we assume
that $D$ has at least two components.

Let $\mathcal{L}$ be the set of components of $D$,
$\mathcal{U}=\{V(L): L\in\mathcal{L}\}$, and
$\mathcal{G}=G[V(D)]/\mathcal{U}$. We have that $\mathcal{G}$ is
3-edge-connected; otherwise and edge-cut of $\mathcal{G}$ of size at
most 2 corresponding to an edge-cut of $G$. It follows that
$\mathcal{G}$ contains a $\varTheta$-graph $\mathcal{H}$.

For every vertex $U=V(L)$ of $\mathcal{H}$: if
$d_{\mathcal{H}}(U)=2$, then we change it with a path of $L$; if
$d_{\mathcal{H}}(U)=3$, then we change it with a $Y$-graph of $L$.
Then we get a $\varTheta$-graph $H$ of $G$. Let $C$ be an even cycle
of $H$. It follows that if $C$ passes through some component $L$ of
$D$, then $C$ passes through $L$ exactly once (that is, $C\cap L$ is
a path).

Let $D'=D\cup C$. Then $D'$ has less component number than $D$.
Since $C$ is an even cycle, we see that $D'$ is bipartite. Note that
the only new component of $D'$ is 2-connected. It follows that $D'$
is contained in a 2-connected bipartite graph $F$ of $G$, and
$D\unlhd F$.

Now we consider the general case. By Lemma
\ref{LeConnectedNeighborhood}, there is a finite subset $T\subseteq
V(G)$ such that $V(D)\cup N_G(D)\subseteq T$ and for every component
$H$ of $G-T$, $N_G(H)$ is 3-connected in $G-D$. It follows that
$T\backslash V(D)$ has a partition $\mathcal{U}$ such that each
$U\in\mathcal{U}$ is 3-connected in $G-D$ and for each component $H$
of $G-T$, $N_G(H)$ is contained in some $U\in\mathcal{U}$. By Lemma
\ref{LeComponentBipartite}, $G-D$ has a subgraph $D'$ such that
every subset $U\in\mathcal{U}$ is contained in a component of $D'$
and every component of $D'$ is either an isolated vertex or a
2-connected bipartite graphs. Now $D\cup D'$ is a subgraph of $G$
such that every component of $D\cup D'$ is either an isolated vertex
or a 2-connected bipartite graph, and for every component $H$ of
$G-(D\cup D')$, $N_G(H)$ is contained in a component of $D\cup D'$.
By the analysis above, we can find a finite 2-connected bipartite
subgraph $F$ of $G$ such that $D\leq F$.
\end{proof}

\begin{lemma}\label{LeFaithfulSubgraph1}
Let $G$ be an infinite locally finite connected graph and
$\mathcal{F}=(F_i)_{i=1}^\infty$ be a sequence of finite connected
subgraphs of $G$ such that $F_i\leq F_{i+1}$ and $N_G(F_i)\subseteq
V(F_{i+1})$. Set $F=\bigcup_{i=1}^\infty F_i$. Suppose that for
every component $H$ of $G-F_{i+1}$, there is a component $D$ of
$F_{i+1}-F_i$ such that $N_G(H)\subseteq V(D)$. Then $F\unlhd^FG$.
\end{lemma}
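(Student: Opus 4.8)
The plan is to reduce the statement to a single ray-separation criterion and then use the hypothesis to control how the rays of a fixed end sit inside the ``far'' components of $G$. First I would check that $F$ is a connected spanning subgraph of $G$. It is connected as the nested union $F=\bigcup_iF_i$ of the connected graphs $F_i$. It is spanning because the absorption condition $N_G(F_i)\subseteq V(F_{i+1})$ together with the connectedness of $G$ gives, by induction on distance, that every vertex at distance $d$ from $V(F_1)$ already lies in $V(F_{1+d})$; hence $V(F)=\bigcup_iV(F_i)=V(G)$. By the remark following Lemma~\ref{LeComb}, for a connected spanning subgraph it suffices to prove that $R_1\approx_GR_2$ implies $R_1\approx_FR_2$ for any two rays $R_1,R_2$ of $F$. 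Moreover, since the $V(F_i)$ increase to $V(G)$, every finite $S\subseteq V(F)$ lies in some $V(F_i)$, and each component of $F-V(F_i)$ is contained in a component of $F-S$; so it is enough to show that for every $i$ the tails of $R_1$ and $R_2$ lie in one component of $F-V(F_i)$.

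The heart of the argument, carried out for a fixed $i$, is to show that the whole of $V(H)$ lies in a single component of $F-V(F_i)$, where $\alpha$ is the common end of $R_1,R_2$ and $H=C(V(F_{i+1}),\alpha)$ is the component of $G-F_{i+1}$ meeting every ray of $\alpha$. Let $D$ be the component of $F_{i+1}-F_i$ with $N_G(H)\subseteq V(D)$ supplied by the hypothesis; note that $D$ is connected in $F$ and disjoint from $V(F_i)$. Take any $y\in V(H)$. Since $F$ is connected and spanning while $V(F_{i+1})\neq\emptyset$ is disjoint from $V(H)$, there is an $F$-path starting at $y$ and leaving $V(H)$; let $x'$ be the last vertex on it lying in $V(H)$ and $w'$ its successor. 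Then $w'\notin V(H)$ is $G$-adjacent to $x'\in V(H)$, and because $H$ is a component of $G-V(F_{i+1})$ this forces $w'\in V(F_{i+1})$, whence $w'\in N_G(H)\subseteq V(D)$. The initial segment of the path from $y$ to $x'$ stays in $V(H)$, hence avoids $V(F_i)$, and the edge $x'w'$ lies in $F$; therefore $y$ is joined to $D$ inside $F-V(F_i)$. As $y$ was arbitrary, all of $V(H)$ lies in the component of $F-V(F_i)$ containing $D$. In particular the tails of $R_1$ and $R_2$ in $H$ share this component, and since $i$ was arbitrary this yields $R_1\approx_FR_2$, which by the reduction above proves $F\unlhd^FG$.

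The step I expect to be the main obstacle is controlling $F$ inside the far component $H$: the induced subgraph $F[V(H)]$ may well be disconnected, so one cannot simply connect two tails by an $F$-path that stays in $H$. The hypothesis is tailored precisely to defeat this. Although $H$ may split apart in $F$, every such piece must reattach to the rest of $F$ through the boundary $N_G(H)$, and this entire boundary is absorbed into one component $D$ of the annulus $F_{i+1}-F_i$. Routing each vertex of $H$ out to its first exit vertex $w'\in N_G(H)\subseteq V(D)$, and then using that $D$ is itself $F$-connected and avoids $V(F_i)$, is what forces all of $V(H)$ — and hence the tails of any two rays of $\alpha$ — into a single component of $F-V(F_i)$. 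I would emphasize that this routing argument uses only the connectedness of $F$ (to exit $H$) and never the potentially-false connectedness of $F[V(H)]$, which is exactly why the edge case of a ray lying entirely in $H$ causes no trouble.
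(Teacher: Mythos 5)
Your proof is correct and takes essentially the same route as the paper's: reduce faithfulness of the connected spanning subgraph $F$ to the ray implication via the remark after the Star--Comb Lemma, locate the component $H$ of $G-F_{i+1}$ containing the tails, and use the hypothesis $N_G(H)\subseteq V(D)$ together with the connectedness of $D$ (which avoids $V(F_i)$) to place both tails in one component of $F-V(F_i)$; the paper merely attaches the two tails to $D$ through the rays' own last-exit vertices from $V(F_{i+1})$, whereas you route every vertex of $H$ there, a slightly stronger but equivalent claim. One small wording fix: in your routing step $x'$ should be the predecessor of the \emph{first} vertex of the path outside $V(H)$ (as your closing paragraph indeed describes), not the last vertex of the path lying in $V(H)$, since otherwise the initial segment from $y$ to $x'$ need not stay in $V(H)$.
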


\begin{proof}
Clearly $\bigcup_{i+1}^\infty V(F_i)=V(G)$ and thus $F\unlhd G$.
Since each $F_i$ is connected, we see that $F$ is connected. Suppose
that $F$ is not faithful to $G$. Let $R_1,R_2$ be two rays of $F$
with $R_1\not\approx_F R_2$ and $R_1\approx_G R_2$. Then there is a
finite set $S\subseteq V(G)$ such that $R_1$ and $R_2$ have tails
contained in distinct component of $F-S$ (we take $S$ that contains
the origins of $R_1$ and $R_2$). Let $F_i$ be a graph in
$\mathcal{F}$ such that $S\subseteq V(F_i)$. For $j=1,2$, let $u_j$
be the last vertices in $R_j$ that contained in $S_{i+1}$, $u^+_j$
be the successor of $u_j$ on $R_j$, and $R'_j$ be the tail of $R_j$
with origin $u^+_j$. It follows that $R'_j$ is contained in
$G-F_{i+1}$. Since $R_1\approx_G R_2$, $R'_1$ and $R'_2$ are
contained in a common component $H$ of $G-F_{i+1}$. By assumption,
there is a component $D$ of $F_{i+1}-F_i$ with $N_G(H)\subseteq
V(D)$. It follows that $u_1,u_2\in V(D)$ and thus the component of
$F-F_i$ containing $D$ contains both $R'_1$ and $R'_2$, a
contradiction.
\end{proof}

\begin{theorem}\label{LeFaithfulBipartite}
Every infinite 3-connected cubic graph has a faithful spanning
2-connected bipartite subgraph.
\end{theorem}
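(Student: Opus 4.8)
The plan is to construct an increasing sequence $F_1\le F_2\le\cdots$ of finite $2$-connected bipartite subgraphs of $G$ and to let $F=\bigcup_{i}F_i$, arranging the construction so that Lemma~\ref{LeFaithfulSubgraph1} applies and forces $F\unlhd^FG$. Concretely, at each step I will secure the three hypotheses of that lemma: $F_i\le F_{i+1}$, $N_G(F_i)\subseteq V(F_{i+1})$, and that every component $H$ of $G-F_{i+1}$ has $N_G(H)$ inside a single component of the annulus $F_{i+1}-F_i$. Because each $F_i$ is $2$-connected (hence connected) and $G$ is connected, the inclusion $N_G(F_i)\subseteq V(F_{i+1})$ already forces $\bigcup_iV(F_i)=V(G)$, so no enumeration of $V(G)$ is needed and spanning comes for free. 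For the base case I take $F_1$ to be any even cycle of $G$: one exists because $\delta(G)=3$, since a longest path yields three chords at an endpoint and hence three cycles, two of whose lengths share a parity, producing an even cycle.

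The inductive step is the core, and for it I intend to reuse the construction carried out inside the proof of Lemma~\ref{LeConnectedBipartite} rather than merely its statement. Running that construction on $D=F_i$: Lemma~\ref{LeConnectedNeighborhood} gives a finite $T\supseteq V(F_i)\cup N_G(F_i)$ for which every component $H$ of $G-T$ has $N_G(H)$ that is $3$-connected in $G-V(F_i)$; Lemma~\ref{LeComponentBipartite}, applied in $G-V(F_i)$ to a partition $\mathcal U$ of $T\setminus V(F_i)$ into parts $3$-connected in $G-V(F_i)$ and capturing each such $N_G(H)$, produces a finite bipartite ``annulus'' $D'\subseteq G-V(F_i)$ whose components are isolated vertices or $2$-connected bipartite graphs and which contains each $N_G(H)$ within one component; finally the vertex-preserving first case of that proof adds only edges to $F_i\cup D'$ to obtain a single finite $2$-connected bipartite graph $F_{i+1}$ with $V(F_{i+1})=V(F_i)\cup V(D')$.

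It remains to verify the three conditions for this $F_{i+1}$. Plainly $F_i\le F_{i+1}$, and $V(F_{i+1})\supseteq T\supseteq N_G(F_i)$ gives $N_G(F_i)\subseteq V(F_{i+1})$. For the annulus condition, first observe that since $N_G(F_i)\subseteq V(F_{i+1})$ no component $H$ of $G-F_{i+1}$ is adjacent to $F_i$, so $N_G(H)$ is disjoint from $V(F_i)$. The construction guarantees that $N_G(H)$ lies in a single component of $F_i\cup D'$; being disjoint from $V(F_i)$, that component must be one of the annulus pieces, i.e.\ a component of $D'$, and since $F_{i+1}$ only adds edges this piece stays inside one component of $F_{i+1}-F_i$. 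Hence $N_G(H)$ is contained in a single component of $F_{i+1}-F_i$. Lemma~\ref{LeFaithfulSubgraph1} now yields $F\unlhd^FG$, and $F=\bigcup_iF_i$ is $2$-connected and bipartite as a nested union of $2$-connected bipartite graphs with compatible bipartitions, completing the proof.

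The step I expect to be most delicate, and the reason Lemma~\ref{LeConnectedBipartite} cannot simply be quoted as a black box, is precisely this annulus condition: I must know both that $F_{i+1}$ adds no vertices beyond $V(F_i)\cup V(D')$ (the vertex-preserving nature of the first case) and that each outer neighbourhood $N_G(H)$ stays within a single component of $F_i\cup D'$. The latter is exactly the property established inside the proof of Lemma~\ref{LeConnectedBipartite}, and it in turn rests on each outer component of $G-T$ attaching to only one part of $\mathcal U$, so that the connecting paths of Lemma~\ref{LeComponentBipartite} cannot split a neighbourhood across two annulus components. The write-up will therefore need to quote these internal features of the earlier construction explicitly.
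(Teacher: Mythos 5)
Your proposal is correct and is essentially the paper's own proof: the paper constructs the same sequence $(F_i)_{i=1}^\infty$ (an even cycle as $F_1$; then Lemma~\ref{LeConnectedNeighborhood} and Lemma~\ref{LeComponentBipartite} to produce the annulus $D_i$ with $T\setminus V(F_i)\subseteq V(D_i)$; then Lemma~\ref{LeConnectedBipartite} to obtain $F_{i+1}$ with $F_i\cup D_i\unlhd F_{i+1}$), and it concludes with Lemma~\ref{LeFaithfulSubgraph1} exactly as you do. Your insistence on the vertex-preserving first case of Lemma~\ref{LeConnectedBipartite} is precisely what the paper invokes when it writes $F_i\cup D_i\unlhd F_{i+1}$ rather than $\leq$, so your version only makes explicit what the paper's citation leaves implicit.
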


\begin{proof}
Let $G$ be an infinite 3-connected cubic graph. We construct a
sequence $\mathcal{F}=(F_i)_{i=1}^\infty$ of finite 2-connected
bipartite subgraph of $G$ such that for $i\geq 1$,\\
(1) $F_i\leq F_{i+1}$ and $N_G(F_i)\subseteq V(F_{i+1})$;\\
(2) for every component $H$ of $G-F_{i+1}$, there is a component $D$
of $F_{i+1}-F_i$ such that $N_G(H)\subseteq V(D)$.

Let $F_1$ be an even cycle of $G$. Suppose we already have $F_i$,
$i\geq 1$. By Lemma \ref{LeConnectedNeighborhood}, there is a finite
$T\subseteq V(G)$ such that $V(F_i)\cup N_G(F_i)\subseteq T$ and for
every component $H$ of $G-T$, $N_G(H)$ is 3-connected in $G-F_i$. By
Lemma \ref{LeComponentBipartite}, $G-F_i$ has a finite subgraph
$D_i$ such that every component of $D_i$ is either an isolated
vertex or a 2-connected bipartite graph, and for every component $H$
of $G-F_i-D_i$, $N_G(H)$ is contained in a component of $D_i$. By
adding isolated vertices to $D_i$, we can take $D_i$ such that
$T\backslash V(F_i)\subseteq V(D_i)$. By Lemma
\ref{LeConnectedBipartite}, $G$ has a 2-connected bipartite subgraph
$F_{i+1}$ with $F_i\cup D_i\unlhd F_{i+1}$. It follows that $F_i\leq
F_{i+1}$, $N_G(F_i)\subseteq V(F_{i+1})$, and for every component
$H$ of $G-F_{i+1}$, there is a component $D$ of $F_{i+1}-F_i$ such
that $N_G(H)\subseteq V(D)$.

By Lemma \ref{LeFaithfulSubgraph1}, $F=\bigcup_{i=1}^\infty F_i$ is
a faithful spanning 2-connected bipartite subgraph of $G$.
\end{proof}

\subsection{From 2-connected subcubic graphs to cacti}

A (finite or infinite) \emph{cactus} is a subcubic graph $G$
consists of a class $\mathcal{C}$ of cycles and
a class $\mathcal{P}$ of paths, such that\\
(1) each two cycles in $\mathcal{C}$ are vertex-disjoint;\\
(2) each two paths in $\mathcal{P}$ are vertex-disjoint; and\\
(3) the graph obtained from $G$ by contracting all cycles in
$\mathcal{C}$ is a tree. \\
The cactus is \emph{even} if each cycle in $\mathcal{C}$ is even. We
call a vertex $v$ of $G$ a \emph{c-vertex} if it is a cut-vertex of
$G$; and a \emph{d-vertex} otherwise. Clearly a d-vertex of a
nontrivial cactus is either of degree 1 or of degree 2 and is
contained in a cycle. We notice that if $G_1,G_2$ are two disjoint
cacti and $v_1,v_2$ are d-vertices of $G_1,G_2$, respectively, then
the graph obtained from $G_1\cup G_2$ by adding an edge $v_1v_2$ is
a cactus.

\begin{lemma}[\v{C}ada et al. \cite{CaKaRoRy}]\label{LeCaKaRoRy}
If $G$ is a finite even cactus, then $G\square K_2$ is Hamiltonian.
\end{lemma}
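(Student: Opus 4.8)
The plan is to strengthen the statement to a rooted version that survives an induction, and then to close the resulting path into a cycle. For a vertex $v$ of $G$ write $v^0,v^1$ for its two copies in $G\square K_2$, and call each matching edge $v^0v^1$ a \emph{rung}. I would prove:
\begin{center}
$(\ast)$ for every finite even cactus $G$ and every d-vertex $r$ of $G$, the prism $G\square K_2$ has a Hamiltonian path with endpoints $r^0$ and $r^1$.
\end{center}
Granting $(\ast)$, the lemma follows at once: a nontrivial cactus has a d-vertex $r$ (a leaf, or a degree-$2$ vertex on a cycle), and adjoining the rung $r^0r^1$ to a Hamiltonian $(r^0,r^1)$-path of $G\square K_2$ produces a Hamiltonian cycle.

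I would prove $(\ast)$ by induction on $|V(G)|$, the base case $G=K_1$ being the single rung. For the inductive step I distinguish the two possible types of a d-vertex. If $r$ has degree $1$, let $r'$ be its unique neighbour; deleting the leaf $r$ turns $r'$ into a d-vertex of the smaller even cactus $G-r$, so by induction $(G-r)\square K_2$ has a Hamiltonian $(r'^0,r'^1)$-path, and prefixing the edge $r^0r'^0$ and suffixing $r'^1r^1$ extends it to the desired $(r^0,r^1)$-path of $G\square K_2$.

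The substantial case is when $r$ has degree $2$, so that $r$ lies on an even cycle $C=y_0y_1\cdots y_{n-1}$ with $y_0=r$ and $n$ even. Here I first build an explicit Hamiltonian path of the cycle-prism $C\square K_2$ that \emph{zigzags} between the two levels, $r^0=y_0^0,\,y_1^0,\,y_1^1,\,y_2^1,\,y_2^0,\,y_3^0,\,y_3^1,\dots$, alternately following a cycle edge and a rung. Because $n$ is even this zigzag terminates at $y_0^1=r^1$, giving a Hamiltonian $(r^0,r^1)$-path of $C\square K_2$ in which, for every $y_i\neq r$, the two copies $y_i^0$ and $y_i^1$ occur consecutively, joined by the rung at $y_i$. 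Each vertex $x$ of $C$ of degree $3$ carries a pendant sub-cactus $G_x$ (the piece of $G$ hanging off $C$ at $x$, including $x$); in $G_x$ the vertex $x$ is a leaf, hence a d-vertex, so by induction $G_x\square K_2$ has a Hamiltonian $(x^0,x^1)$-path $Q_x$. I then splice: in the zigzag path I replace, at each such $x$, the rung $x^0x^1$ by $Q_x$. Since distinct pendant sub-cacti are vertex-disjoint and meet $C$ only at their own attachment vertex, each splice replaces one edge by an internally disjoint path, and the result is a single Hamiltonian $(r^0,r^1)$-path of $G\square K_2$.

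I expect the crux to be precisely the even-cycle construction. The point is that the zigzag returns to the level of $r$ after an even number of cycle edges, so it ends at $r^1$ exactly when the cycle is even; for an odd cycle the same scheme would return to $r^0$, which is impossible for a path, and this is exactly where the hypothesis that the cactus is \emph{even} is used. The remaining care is bookkeeping: checking that the zigzag is a genuine Hamiltonian path of $C\square K_2$ (no repeated vertex or edge, in particular along the closing edge $y_{n-1}y_0$), and that the pendant splices, being pairwise independent, preserve both spanning and path-ness.
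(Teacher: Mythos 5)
Your proof is correct, but there is no internal proof to compare it with: the paper does not prove this lemma at all---it imports it as a known theorem of \v{C}ada, Kaiser, Rosenfeld and Ryj\'a\v{c}ek \cite{CaKaRoRy} and uses it as a black box (in Lemma \ref{LeCactusHamiltonDegree}, and again, after cleaving degree-4 vertices, in Lemma \ref{LeSemiHamilton}). So what you have produced is a self-contained replacement for the citation, and it checks out. The rooted strengthening $(\ast)$ is the right inductive statement, and your two cases are exactly the two possibilities the paper records for a d-vertex of a nontrivial cactus: a leaf, or a degree-2 vertex on a cycle. In the leaf case, under the paper's definition of cactus (subcubic, pairwise vertex-disjoint cycles, pairwise vertex-disjoint paths), the neighbour $r'$ of the deleted leaf $r$ is indeed a d-vertex of $G-r$: either it has degree 1 there, or it has degree 2 and lies on a cycle, and subcubicity prevents any further path from being attached at $r'$, so it is not a cut vertex. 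In the cycle case, your zigzag path of $C\square K_2$ contains the rung at every vertex of $C$ other than $r$, evenness of $C$ is used exactly where you say it is, and pendant sub-cacti at distinct attachment vertices are vertex-disjoint (two attachments to the same branch would survive the contraction of all cycles as a cycle, contradicting condition (3) of the paper's definition), so the splices are independent and the spliced walk is a genuine Hamiltonian $(r^0,r^1)$-path. Your argument buys the paper self-containedness; the citation buys brevity, the original source establishing this lemma as one ingredient in a broader study of Hamiltonian decompositions of prisms over cubic graphs.
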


Let $G$ be a block-chain, and $u,v$ be two vertices of $G$. We say
$G$ is a block-chain \emph{connecting $u,v$} if $G$ is
non-separable, or $G$ is separable and $u,v$ are two inner-vertices
of the two distinct end-blocks of $G$.

\begin{lemma}\label{LeBlockChain}
Let $G$ be a subcubic block-chain connecting $u,v$ and $x$ be a
vertex of $G$. Then $G$ has a finite cactus $F$ with $u,v,x\in V(F)$ such that\\
(1) $u,v$ are d-vertices of $F$; and\\
(2) for every component $H$ of $G-F$, $N_G(H)$ is contained in a
cycle of $F$.
\end{lemma}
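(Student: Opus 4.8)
\noindent\emph{Approach.}
I would argue by induction on the number of blocks of $G$, reducing the whole statement to the case of a single $2$-connected block. The assembling of per-block cacti is automatic here because $G$ is subcubic. If $c$ is the cut vertex shared by two consecutive blocks $B$ and $B'$, then the (at most three) edges incident with $c$ split into a nonempty part inside $B$ and a nonempty part inside $B'$, so at least one side carries only a single edge at $c$. Hence in the cactus built on that side $c$ is forced to be a leaf and lies on no cycle there; gluing the two block-cacti at $c$ therefore keeps the cycles pairwise vertex-disjoint and keeps $\deg(c)\le 3$, so the union is again a cactus. Moreover $u$ and $v$ are inner vertices of the two end-blocks, so all their incident edges stay inside those end-blocks; thus if $u,v$ are made d-vertices of the end-block cacti they remain d-vertices of the whole $F$. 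It therefore suffices to treat one block $B$ (either $K_2$, which is immediate, or $2$-connected) with two prescribed terminals $s,t$ and the vertex $x$, producing a cactus $F\subseteq B$ that contains $s,t,x$, has $s,t$ as d-vertices, and satisfies condition (2).

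For a single $2$-connected block I would induct on $|V(B)|$, and the key elementary observation is that a \emph{single cycle} already satisfies condition (2): if $C$ is a cycle and $H$ a component of $B-V(C)$, then any vertex adjacent to $H$ but not in $H$ must lie on $C$ (otherwise it would lie in the same component as $H$), so $N_B(H)\subseteq V(C)$. Using $2$-connectivity I first take a cycle $C_0$ through $s$ and $t$. If $x\in V(C_0)$ I put $F=C_0$ and I am done. Otherwise, by Menger's theorem there are two disjoint paths from $x$ to $C_0$ meeting it only at endpoints $p_1,p_2$, whose union is a path $Q$ through $x$ joining $p_1,p_2$. If $p_1,p_2$ lie on a common $s$--$t$ arc of $C_0$, I reroute $C_0$ by replacing the other $p_1$--$p_2$ arc with $Q$, producing a single cycle through $s,t$ and $x$, and again take $F$ to be this cycle.

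\noindent\emph{Main obstacle.}
The hard case is when $p_1,p_2$ \emph{separate} $s$ and $t$ on $C_0$; the prototype is the $\varTheta$-graph with $s,t,x$ on its three internally disjoint paths, where no single cycle can contain $s,t$ and $x$ at once. Here $F$ must carry pendant or second-cycle structure, and condition (2) becomes delicate because a leftover component could attach to a non-cycle vertex of $F$. I would resolve this by recursing on the bridge $H_0$ of $C_0$ containing $x$: since $p_1,p_2\in N_B(H_0)$ separate $s$ from $t$, the graph $H_0$ together with its attachment edges (adding a direct $p_1$--$p_2$ link if necessary) is a \emph{strictly smaller} instance, because $C_0$ contributes at least the two vertices $s,t$ outside $H_0$. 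Solving it yields a cactus through $x$ joining two attachment vertices as d-vertices, which I splice into $C_0$ in place of the $p_1$--$p_2$ sub-arc not containing $s$ or $t$, keeping an $s$--$t$ cycle while reaching $x$; the parts of $H_0$ not used in this cycle are hung off it as pendants, and superfluous edges, both of whose ends already lie in $F$, are dropped so as not to create a forbidden extra cycle through the separating pair.

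The two steps I expect to cost the most work are (i) keeping $F$ a genuine cactus while splicing the recursively built piece into $C_0$, so that no cycle of the smaller instance ends up sharing a vertex with the reused arc of $C_0$ --- this is precisely where the subcubic hypothesis and the single-edge attachments of the bridges are essential --- and (ii) re-verifying condition (2) for the assembled $F$. Step (ii) should go through cleanly: because the separating vertices and the chosen attachment vertices are all kept in $V(F)$, every component of $B-F$ lies within one bridge and cannot reach the separating pair, so it is governed by the inductive condition (2) of the smaller instance and sees only a single cycle of $F$.
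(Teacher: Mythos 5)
Your reduction to single blocks is sound (in a subcubic graph at most one block at each cut vertex is $2$-connected, so the gluings are pendant-edge attachments at d-vertices), and your easy sub-case is fine; this part parallels the paper, which peels off end-blocks one at a time. The genuine gap is in your hard case, which is the crux of the lemma. When $p_1,p_2$ separate $s$ and $t$ on $C_0$, \emph{both} $p_1$--$p_2$ arcs of $C_0$ meet $\{s,t\}$, so ``the $p_1$--$p_2$ sub-arc not containing $s$ or $t$'' that you propose to splice over does not exist --- its nonexistence is exactly what defines this case. Your own $\varTheta$-prototype already shows that no cycle of $B$ contains $s,t,x$ simultaneously, so no splicing operation can ``keep an $s$--$t$ cycle while reaching $x$'' in the sense of putting all three on one cycle; as written, the construction in the decisive case is contradictory rather than merely incomplete.

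What is missing is the mechanism the paper uses, which has three interlocking ingredients. First, an extremal choice: among all cycles $C$ through $s,t$, take one minimizing the segment $P[u',v']$ of a fixed $s$--$t$ path $P$ through $x$, where $u',v'$ are the first vertices of $P[x,s]$ and $P[x,t]$ on $C$; this minimality forces $\{u',v'\}$ to be a $2$-cut separating $x$ from $V(C)\setminus\{u',v'\}$, so the bridge containing $x$ attaches to $C$ \emph{only} at $u',v'$. Second, subcubicity then gives $u',v'$ unique neighbours $u'',v''$ inside the $x$-side component $G'$. Third, one recurses on $G'$ with terminals $u'',v''$ --- strictly inside the bridge, not the attachment vertices --- and sets $F=C\cup F'\cup\{u'u''\}$, deliberately omitting the edge $v'v''$; this single-edge pendant attachment is what keeps $F$ a cactus and makes condition (2) checkable (components on the $x$-side see only cycles of $F'$, components on the $C$-side see only $C$). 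Your variant, recursing on the bridge plus all its attachment edges (and a virtual $p_1p_2$ edge) with $p_1,p_2$ as terminals, cannot be glued back: a connected cactus $F_0$ containing both $p_1$ and $p_2$ together with $C_0$ contains a $\varTheta$-subgraph, so edges must be discarded, and your rule of dropping ``superfluous edges'' can disconnect $F$ or destroy the very cycles that (2) refers to; moreover, without the $2$-cut the bridge may attach to $C_0$ at further vertices $p_3$, and a leftover component adjacent both to $p_3\in V(C_0)$ and to a cycle of $F_0$ violates (2), while your instance need not even be a block-chain and the virtual edge, if used by $F_0$, cannot be realized inside $B$ without re-entering $C_0$.
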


\begin{proof}
The assertion is trivial if $G$ has only two vertices. So we assume
that $|V(G)|\geq 3$. Since $G$ is a block-chain connecting $u,v$,
$G$ has a path $P$ connecting $u,v$ and passing through $x$. Suppose
the assertion is not true. We take a counterexample such that the
path $P$ is as short as possible.

Suppose first that $G$ is separable. Let $B$ be an end-block of $G$
such that $x\notin I_G(B)$. Assume without loss of generality that
$u\in I_G(B)$ and let $u'$ be the cut-vertex of $G$ contained in
$B$. It follows that $u'\in V(P)$. If $B$ is 2-connected, then let
$C$ be a cycle of $B$ containing $u,u'$; otherwise $|V(B)|=2$, let
$C=B$. Set $G'=G-I_G(B)$ and $P'=P[u',v]$. Then $G'$ is a
block-chain connecting $u',v$ and $P'$ is a path connecting $u',v$
passing through $x$ that is shorter than $P$. It follows that $G'$
has a cactus $F'$ such that $u',v$ are d-vertices, and for every
component $H$ of $G'-F'$, $N_{G'}(H)$ is contained in a cycle of
$F'$. It follows that $F=C\cup F'$ is a cactus of $G$ satisfying the
requirement.

Suppose now that $G$ is 2-connected. If $G$ has a cycle $C$ with
$u,v,x\in V(C)$, then $C$ is a cactus of $G$ satisfying the
requirement. So we assume that $u,v,x$ are not contained in any
cycles of $G$. Let $C$ be a cycle containing $u,v$, let $u'$ be the
first vertex on $P[x,u]$ that contained in $C$, and $v'$ be the
first vertex on $P[x,v]$ that contained in $C$ (possibly $u=u'$ or
$v=v'$ or both). We take the cycle $C$ such that $P[u',v']$ is as
short as possible. Let $u''$ be the successor of $u'$, and $v''$ be
the predecessor of $v'$ on $P$.

We show that $\{u',v'\}$ is a cut of $G$ separating $x$ and
$V(C)\backslash\{u',v'\}$. Otherwise there is a path $P'$ between
$P[u'',v'']$ and $C-\{u',v'\}$. Let $w$ be the end-vertex of $P'$ on
$C$. It follows that two of the three vertices $u',v',w$ are
contained in a common segments $\overrightarrow{C}[u,v]$ or
$\overrightarrow{C}[v,u]$. Thus there is a cycle $C'$ with $u,v\in
V(C)$ that contains some vertices appear before $u',v'$ on $P[x,u]$
and $P[x,v]$, a contradiction. This implies that $\{u',v'\}$ is a
cut of $G$ separating $x$ and $V(C)\backslash\{u',v'\}$.

Let $G'$ be the component of $G-\{u',v'\}$ containing $x$. Since $G$
is subcubic, we can see that $N_{G'}(u')=\{u''\}$ and
$N_{G'}(v')=\{v''\}$. Let $P'=P[u'',v'']$. Then $G'$ is a
block-chain connecting $u'',v''$ and $P'$ is a path connecting
$u'',v''$ passing through $x$ that is shorter than $P$. It follows
that $G'$ has a cactus $F'$ such that $u'',v''$ are d-vertices of
$F'$, and for every component $H$ of $G'-F'$, $N_{G'}(H)$ is
contained in a cycle of $F'$. It follows that $F=C\cup
F'\cup\{u'u''\}$ is a cactus of $G$ satisfying the requirement.
\end{proof}

\begin{lemma}\label{LeCycleNeigborhood}
Let $G$ be a 2-connected subcubic graph, $C_0$ be a cycle, and $x$
be a vertex of $G$. Then $G$ has a finite cactus $F$ containing
$C_0$ and $x$ such that for every component $H$ of $G-F$, there is a
cycle $C$ of $F$ other than $C_0$ such that $G[V(H)\cup V(C)]$ is
2-connected.
\end{lemma}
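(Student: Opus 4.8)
The plan is to build the cactus $F$ greedily by repeatedly attaching cycles to $C_0$, much in the spirit of Lemma~\ref{LeBlockChain}, while keeping track of which vertices serve as the ``d-vertex'' attachment points. The key observation is the following local statement: if $C$ is any cycle already placed in $F$ and $H$ is a component of $G-F$ whose neighborhood $N_G(H)$ meets $C$ in more than the required pattern, then because $G$ is $2$-connected and subcubic, $H$ attaches to $C$ through at most two edges (each vertex of $C$ has at most one free neighbor in the subcubic graph), and we can find a cycle inside $G[V(H)\cup V(C)]$ that passes through the two attachment vertices. First I would fix a cycle $C_0$ through $x$ (or, if $x\notin V(C_0)$, first connect $x$ to $C_0$ by a path via $2$-connectivity and absorb it into a cycle). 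Then I would process the components of $G-C_0$ one at a time, maintaining the invariant that the current subgraph is a finite cactus in which $C_0$ is present and the target $2$-connectivity condition holds for every already-resolved component.

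The core step resolves a single component $H$ of $G-F$. By $2$-connectivity of $G$, there are two internally disjoint paths from $H$ to $F$, so $H$ attaches to $F$ through (at least) two distinct vertices; since $G$ is subcubic, $|E_G(H,F)|$ is small, and in fact $H$ attaches to the rest of the graph through a bounded $2$-element cut, say $\{a,b\}\subseteq V(F)$. The plan is to invoke Lemma~\ref{LeBlockChain} on the subcubic block-chain $G[V(H)\cup\{a,b\}]$ connecting $a,b$ (with $x$ replaced by any convenient interior vertex): this yields a finite cactus $F_H$ in which $a,b$ are d-vertices and every further component attaches inside a single cycle of $F_H$. I would then glue $F_H$ onto $F$ at $a,b$, forming a new cycle $C$ together with the path of $F$ between $a$ and $b$, so that $G[V(H)\cup V(C)]$ is $2$-connected exactly as demanded. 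Because each attaching cut has size $2$, this gluing preserves the cactus structure (two d-vertices joined along the chain produce one new cycle), and the condition ``other than $C_0$'' is automatic since the newly created cycle is distinct from $C_0$.

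The main obstacle I anticipate is controlling finiteness and termination: $G$ is infinite, so a naive one-component-at-a-time process need not terminate, and the statement only asserts the existence of a \emph{finite} cactus $F$. The resolution is that the conclusion does not require every component of $G-F$ to be resolved by a separate gluing—rather, each leftover component $H$ of $G-F$ must merely satisfy that $G[V(H)\cup V(C)]$ is $2$-connected for some cycle $C$ of $F$. Thus I would not try to exhaust all components; instead, after selecting $C_0$, I would apply Lemma~\ref{LeConnectedNeighborhood}-type reasoning to find a finite set $T\supseteq V(C_0)$ such that every component $H$ of $G-T$ attaches through a $2$-cut lying in (or controllable by) a single block, and then build one finite cactus covering $T$ by finitely many applications of Lemma~\ref{LeBlockChain}. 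The delicate point to verify is that each leftover infinite component $H$ of $G-F$ still sees a genuine cycle $C\subseteq F$, $C\neq C_0$, with $G[V(H)\cup V(C)]$ $2$-connected; this follows because the two vertices of the attaching cut both lie on a common newly-built cycle of $F$, and $2$-connectivity of $G$ supplies two disjoint $H$-to-$C$ connections. Once finiteness of $T$ and of the number of attachment cycles is established, the cactus axioms (disjoint cycles, tree after contracting cycles) are routine to check, and the proof concludes.
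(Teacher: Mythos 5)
Your proposal has a genuine gap at its core step. You claim that, since $G$ is subcubic and $2$-connected, a component $H$ of $G-F$ ``attaches to the rest of the graph through a bounded $2$-element cut, say $\{a,b\}\subseteq V(F)$.'' This is false. Subcubicity only guarantees that each vertex of a cycle of $F$ sends at most one edge to $G-F$; it does not bound $|N_G(H)|$ by $2$. A component of $G-C_0$ may attach to arbitrarily many vertices of a long cycle $C_0$, and indeed the lemma must cover $3$-connected cubic graphs (that is how it is used via Theorem~\ref{LeFaithfulBipartite} and Theorem~\ref{LeFaithfulCactus}), where \emph{every} component of $G-F$ has at least three neighbours on $F$, so no finite preprocessing can reduce to $2$-element attachment cuts. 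Your fallback --- ``Lemma~\ref{LeConnectedNeighborhood}-type reasoning'' --- does not help either: that lemma and its proof rely essentially on $3$-connectedness (equivalently $3$-edge-connectedness in the subcubic setting), and its conclusion is about sets being $3$-connected in $G-S$, not about $2$-cuts. The case $|N_G(C_0)|\geq 3$ is exactly where the real work lies: the paper handles it by contracting the $2$-connected blocks of $D=G-C_0$ to get a tree, choosing a path $\mathcal{P}$ from $C_0$ to a vertex of branching degree at least $3$ whose interior is a block-chain $G'$, applying Lemma~\ref{LeBlockChain} to $G'$, and then recursing on $G''=G-G'$, for which $N_{G''}(C_0)$ is strictly smaller. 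Finiteness and termination come from induction on $|N_G(C_0)|\leq|V(C_0)|$, not from exhausting components.

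A second, independent problem is your gluing step. You attach the cactus $F_H$ to $F$ at \emph{both} vertices $a,b$ and form ``a new cycle $C$ together with the path of $F$ between $a$ and $b$.'' That path may run through existing cycles of $F$ (including $C_0$ itself), and then the new cycle shares vertices with them, violating the cactus axioms (cycles of a cactus must be pairwise vertex-disjoint, and contracting them must leave a tree). The paper avoids this by attaching each block-chain cactus by a \emph{single} edge (a pendant attachment, which always preserves the cactus structure); the cycle certifying $2$-connectedness of $G[V(H)\cup V(C)]$ for a leftover component $H$ is then a cycle \emph{inside} the cactus produced by Lemma~\ref{LeBlockChain} (whose conclusion places $N(H)$ inside one cycle $C$; together with $|N_G(H)|\geq 2$ and connectedness of $H$ this gives the required $2$-connectedness), not a hybrid cycle created by the gluing. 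As written, your construction neither produces a cactus nor terminates on the hard case, so the argument does not go through.
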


\begin{proof}
Suppose the assertion is not true. We take a counterexample such
that $N_G(C_0)$ is as small as possible.

Suppose first that $G-C_0$ has at least two components. Let
$\mathcal{D}=\{D_1,D_2,\ldots,D_k\}$ be the set of components of
$G-C_0$. Since $G$ is subcubic, any two distinct components in
$\mathcal{D}$ have disjoint neighborhood in $C_0$. For every
component $D_i$, let $G_i=G[V(D_i)\cup V(C_0)]$, and let $x_i$ be a
vertex of $G_i$ such that if $x\in V(G_i)$ then $x_i=x$. Now $G_i$
is 2-connected and $N_{G_i}(C_0)$ is smaller than $N_G(C_0)$. It
follows $G_i$ has a cactus $F_i$ containing $C_0$ and $x_i$, and for
every component $H$ of $G_i-F_i$, there is a cycle $C$ of $F_i$
other than $C_0$ such that $G_i[V(H)\cup V(C)]$ is 2-connected. Thus
$F=\bigcup_{i=1}^kF_i$ is a cactus of $G$ satisfying the
requirement, a contradiction. So we assume that $G-C_0$ has only one
component. Let $D=G-C_0$.

If $D$ is trivial, then the cactus consists of $C_0$ and an edge in
$E_G(C_0,D)$ satisfying the requirement, a contradiction. So we
assume that $D$ has at least two vertices.

Suppose now that $|N_G(C_0)|=2$, say $E_G(C_0,D)=\{uu',vv'\}$. Since
$G$ is 2-connected, $uu'$ and $vv'$ are nonadjacent, and the graph
$G'=D$ is a block-chain connecting $u'$ and $v'$. If $x\in V(G')$,
then let $x'=x$; otherwise let $x'$ be an arbitrary vertex of $G'$.
By Lemma \ref{LeBlockChain}, $G'$ has a cactus $F'$ containing $x'$
such that $u',v'$ are d-vertices of $F'$, and for every component
$H$ of $G'-F'$, $N_{G'}(H)$ is contained in a cycle of $F'$. Thus
$F=C_0\cup F'\cup\{uu'\}$ is a cactus of $G$ satisfying the
requirement, a contradiction. So we assume that $|N_G(C_0)|\geq 3$.

Let $\mathcal{B}$ be the set of 2-connected blocks of $D$. Since $G$
is subcubic, each two blocks in $\mathcal{B}$ are disjoint. Let
$\mathcal{G}$ be the graph obtained from $G$ by contracting each
block in $\mathcal{B}$. We notice that $\mathcal{G}-C_0$ is a tree
each leaf of which has a neighbor in $C_0$. It follows that
$\mathcal{G}$ is 2-connected, and there is a vertex in
$\mathcal{G}-C_0$ that has degree at least 3 in $\mathcal{G}$.
Therefore $\mathcal{G}$ has a path $\mathcal{P}$ between a vertex
$u\in V(C_0)$ and $v\in V(\mathcal{G})\backslash V(C_0)$ such that
$d_{\mathcal{G}}(v)\geq3$ and all internal vertices of $\mathcal{P}$
has degree 2 in $\mathcal{G}$. Thus each internal vertex of
$\mathcal{P}$ is either a vertex of $G$ of degree 2, or obtained by
contracting a block in $\mathcal{B}$ that contains exactly two
cut-vertices of $G-C_0$. Let $G'$ be the subgraph of $G$ induced by
the internal vertices of $\mathcal{P}$ and the vertices of the
blocks that corresponding to an internal vertex of $\mathcal{P}$. It
follows that $G'$ is a block-chain, say connecting $u'$ and $v'$,
where $u'\in N_G(u)$, and $v'\in N_G(v)$. If $x\in V(G')$ then let
$x'=x$, otherwise let $x'$ be an arbitrary vertex of $G'$. By Lemma
\ref{LeBlockChain}, $G'$ has a cactus $F'$ containing $u',v',x'$
such that $u',v'$ are d-vertices of $F'$, and for every component
$H$ of $G'-F'$, $N_{G'}(H)$ is contained in a cycle of $F'$.

Let $G''=G-G'$. Clearly $G''$ is 2-connected, and $N_{G''}(C_0)$ is
smaller than $N_G(C_0)$. If $x\in V(G'')$ then let $x''=x$,
otherwise let $x''$ be an arbitrary of $G''$. It follows that $G''$
has a cactus $F''$ containing $C_0$ and $x''$ such that for every
component $H$ of $G''-F''$, $N_{G''}(H)$ is contained in a cycle of
$F''$. Now $F=F'\cup F''\cup\{uu'\}$ is a cactus of $G$ satisfying
the requirement, a contradiction.
\end{proof}

\begin{lemma}\label{LeFaithfulSubgraph2}
Let $G$ be a connected graph and $\mathcal{F}=(F_i)_{i=1}^\infty$ be
a sequence of finite connected subgraphs of $G$ such that $F_i\leq
F_{i+1}$. Set $F=\bigcup_{i=1}^\infty F_i$. Suppose that $F\unlhd
G$, and for every component $H$ of $G-F_{i+1}$, there is a component
$L$ of $F_{i+1}-F_i$ such that $N_F(H)\subseteq V(L)$ (for $i=1$,
set $L=F_1$). Then $F\unlhd^FG$.
\end{lemma}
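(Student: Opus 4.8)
The plan is to imitate the proof of Lemma~\ref{LeFaithfulSubgraph1}, the only essential change being that the weaker hypothesis $N_F(H)\subseteq V(L)$ (the neighbourhood taken in $F$ rather than in $G$) forces us to track which edges actually lie in $F$. Since each $F_i$ is connected and $F_i\leq F_{i+1}$, the union $F=\bigcup_i F_i$ is connected, and $F\unlhd G$ is assumed. By the remark following Lemma~\ref{LeComb}, it therefore suffices to prove that for any two rays $R_1,R_2$ of $F$, $R_1\approx_G R_2$ implies $R_1\approx_F R_2$.

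Suppose not, and choose rays $R_1,R_2$ of $F$ with $R_1\approx_G R_2$ but $R_1\not\approx_F R_2$. Then there is a finite set $S\subseteq V(G)$, which we may take to contain the origins of $R_1$ and $R_2$, such that the tails of $R_1$ and $R_2$ lie in distinct components of $F-S$. Since $V(F)=V(G)$ and the $F_i$ are nested, pick $F_i$ with $S\subseteq V(F_i)$. For $j=1,2$ let $u_j$ be the last vertex of $R_j$ lying in $V(F_{i+1})$, let $u_j^+$ be its successor on $R_j$, and let $R_j'$ be the tail of $R_j$ starting at $u_j^+$; as $V(F_{i+1})$ is finite and $R_j$ infinite, $u_j$ and $u_j^+$ exist and $R_j'$ is contained in $G-F_{i+1}$. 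Applying $R_1\approx_G R_2$ with the finite separator $V(F_{i+1})$ places tails of both rays, and hence $R_1'$ and $R_2'$, in one common component $H$ of $G-F_{i+1}$.

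Now invoke the hypothesis to obtain a component $L$ of $F_{i+1}-F_i$ with $N_F(H)\subseteq V(L)$. Here is the one point that differs from Lemma~\ref{LeFaithfulSubgraph1}: each edge $u_ju_j^+$ lies on $R_j$ and hence in $E(F)$, while $u_j^+\in V(H)$ and $u_j\in V(F_{i+1})$ is disjoint from $V(H)$; thus $u_j\in N_F(H)\subseteq V(L)$, and so $u_1,u_2\in V(L)$. Since $V(L)\subseteq V(F_{i+1})\setminus V(F_i)$ and $R_j'\subseteq G-F_{i+1}$, both endpoints of the edge $u_ju_j^+$ avoid $V(F_i)$, so this edge lies in $F-F_i$ and connects $u_j$ to the tail $R_j'$; hence $R_1'$ and $R_2'$ both lie in the component of $F-F_i$ that contains $L$. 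Finally $S\subseteq V(F_i)$ gives that $F-F_i$ is a subgraph of $F-S$, so $R_1'$ and $R_2'$ lie in one component of $F-S$, contradicting the choice of $S$. The expected obstacle is precisely this neighbourhood bookkeeping: one must verify that the edges $u_ju_j^+$ witnessing membership in $N_F(H)$ indeed belong to $F$, which is exactly what makes the $F$-neighbourhood hypothesis strong enough although it is weaker than the $G$-neighbourhood hypothesis of Lemma~\ref{LeFaithfulSubgraph1}.
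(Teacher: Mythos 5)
Your proof is correct, but it is not the route the paper takes: the paper never reruns the ray argument for this lemma. Instead it reduces the statement to Lemma \ref{LeFaithfulSubgraph1} by passing to a subsequence: starting from $F'_1=F_1$, given $F'_i=F_r$ it chooses $F_s$ with $N_G(F_r)\subseteq V(F_s)$ and then $F'_{i+1}=F_t$ with $N_G(F_s)\subseteq V(F_t)$ (local finiteness and $F\unlhd G$ make this possible), and then shows that for every component $H$ of $G-F_t$ the set $N_G(H)$ is connected in $F_t-F_r$: any $u,v\in N_G(H)$ lie in $V(F_t)\setminus V(F_s)$, paths $P^u,P^v$ inside $F_t$ attach them to vertices $u',v'$ of $F_s$, these attachment vertices lie in $N_F(H')$ for the component $H'$ of $G-F_s$ containing $H$, and the hypothesis applied at the pair $(F_{s-1},F_s)$ joins $u',v'$ by a path in $F_s-F_{s-1}$. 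This upgrades the $N_F$-hypothesis on the original sequence to the $N_G$-hypothesis on the subsequence, after which Lemma \ref{LeFaithfulSubgraph1} is cited as a black box. You instead inline the argument of Lemma \ref{LeFaithfulSubgraph1} and repair it at the single point where its stronger hypothesis was used, observing that $u_j$ is attached to $H$ by the ray edge $u_ju_j^+\in E(F)$, so that $u_j\in N_F(H)\subseteq V(L)$ already follows from the weaker hypothesis; the rest of the argument (connecting $R'_1$ and $R'_2$ through $L$ inside $F-F_i\subseteq F-S$) goes through verbatim. Both arguments are sound. Yours is shorter and more self-contained, avoids the two-step subsequence bookkeeping, and in effect shows that Lemma \ref{LeFaithfulSubgraph1} remains true with $N_G(H)$ weakened to $N_F(H)$ once $F\unlhd G$ is assumed; the paper's version buys modularity, reusing Lemma \ref{LeFaithfulSubgraph1} unchanged, at the price of an auxiliary connectivity argument. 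One small point, common to both proofs and worth stating explicitly: the reduction to proving ``$R_1\approx_GR_2$ implies $R_1\approx_FR_2$'' (the remark following Lemma \ref{LeComb}) and the existence of $F_i$ containing $S$ use that $G$ is locally finite and $F$ is spanning and connected --- local finiteness being the paper's standing assumption even though the lemma's statement says only ``connected''.
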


\begin{proof}
We take a subsequence $\mathcal{F}'$ of $\mathcal{F}$ as follows:
Let $F'_1=F_1$. Suppose we already have $F'_i=F_r$, and we will get
$F'_{i+1}$. Since $N_G(F_r)$ is finite, there exists $F_s$ such that
$N_G(F_r)\subseteq V(F_s)$; and there is $F_t$ such that
$N_G(F_s)\subseteq V(F_t)$. We let $F'_{i+1}=F_t$.

Let $H$ be an arbitrary component of $G-F_t$. We will show that
$N_G(H)$ is connected in $F_t-F_r$. Let $u,v$ be two vertices in
$N_G(H)$. We have $u,v\in V(F_t)\backslash V(F_s)$. Let $P^u,P^v$ be
two paths of $F_t$ between $u,v$, respectively, to $F_s$. Suppose
that $u',v'$ are the termini of $P^u,P^v$, respectively. Clearly
$P^u-u'$, $P^v-v'$ and $H$ are contained in the same component of
$G-F_s$. By our assumption, $u',v'$ are connected, say by a path
$P$, in $F_s-F_{s-1}$. Now $P^uu'Pv'P^v$ is a path of $F_t-F_r$
connecting $u,v$. It follows that there is a component $L$ of
$F_t-F_r$ such that $N_G(H)\subseteq V(L)$.

Clearly $F=\bigcup_{i=1}^\infty F'_i$ and $N(F'_i)\subseteq
V(F'_{i+1})$. By Lemma \ref{LeFaithfulSubgraph1}, $F\unlhd^FG$.
\end{proof}

\begin{theorem}\label{LeFaithfulCactus}
Every infinite 2-connected subcubic graph has a faithful spanning
cactus.
\end{theorem}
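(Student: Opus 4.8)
The plan is to build, for an infinite $2$-connected subcubic graph $G$, an exhausting sequence $(F_i)_{i=1}^\infty$ of finite cacti and then invoke Lemma~\ref{LeFaithfulSubgraph2}. The construction mirrors the bipartite case (Theorem~\ref{LeFaithfulBipartite}), but using Lemma~\ref{LeCycleNeigborhood} as the engine instead of the bipartite lemmas. First I would fix an enumeration $V(G)=\{v_1,v_2,\ldots\}$ and start with $F_1$ a cycle through $v_1$ (which exists since $G$ is $2$-connected). Suppose $F_i$ has been built as a finite cactus. The key is to grow it so that (a) $v_{i+1}\in V(F_{i+1})$, (b) $N_G(F_i)\subseteq V(F_{i+1})$, and (c) the neighborhood condition of Lemma~\ref{LeFaithfulSubgraph2} holds: every component $H$ of $G-F_{i+1}$ has $N_G(H)$ inside a single component of $F_{i+1}-F_i$.

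To produce $F_{i+1}$ I would apply Lemma~\ref{LeCycleNeigborhood} to each cycle of $F_i$ in turn, or more cleanly, enlarge $F_i$ to capture the finite set $N_G(F_i)\cup\{v_{i+1}\}$ and then run Lemma~\ref{LeCycleNeigborhood} so that every far-away component $H$ of $G-F_{i+1}$ sees $N_G(H)$ lying on one cycle $C\neq C_0$ with $G[V(H)\cup V(C)]$ $2$-connected. The output of Lemma~\ref{LeCycleNeigborhood} is exactly a finite cactus $F$ in which each leftover component $H$ attaches to a single cycle of $F$; that cycle is a genuine new piece of $F_{i+1}-F_i$ once we arrange the construction so that the fresh cactus material is disjoint from $F_i$ except where it is glued at a d-vertex. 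Since the paper's definition of cactus permits gluing two disjoint cacti along d-vertices by a single edge, the union $F_{i+1}$ remains a cactus, and the component of $F_{i+1}-F_i$ containing the relevant new cycle $C$ will contain $N_G(H)$, giving condition (c).

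The main obstacle will be bookkeeping the $2$-connectivity and the cactus structure simultaneously while threading the two distinguished cycles $C_0$ and $C$ of Lemma~\ref{LeCycleNeigborhood} into the inductive interface. Concretely, Lemma~\ref{LeCycleNeigborhood} is stated for a $2$-connected $G$ with a single cycle $C_0$, but $F_i$ is a cactus with many blocks, so I must either apply it block-by-block (on each $2$-connected piece of $G$ hanging off $F_i$, using the corresponding cycle of $F_i$ as $C_0$) or first replace $G$ by finitely many $2$-connected subgraphs induced by the components of $G-F_i$ together with their attaching cycle. Verifying that these local applications patch together into one cactus without creating extra adjacencies between cycles is the delicate point, since cacti forbid two cycles from sharing a vertex.

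Once the sequence is in hand, the conclusion is immediate: $\bigcup F_i$ is spanning because the enumeration forces every $v_j$ into some $F_i$, it is connected as an increasing union of connected graphs, it is a cactus as an increasing union of cacti glued at d-vertices, and Lemma~\ref{LeFaithfulSubgraph2} then yields $F\unlhd^F G$. I would therefore spend the bulk of the write-up on the inductive step, and treat the passage to the limit as a one-line appeal to Lemma~\ref{LeFaithfulSubgraph2}.
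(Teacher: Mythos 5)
Your high-level skeleton is the same as the paper's (iterate Lemma~\ref{LeCycleNeigborhood} over the components of $G-F_i$ to build an increasing sequence of finite cacti, then finish with Lemma~\ref{LeFaithfulSubgraph2}), but the invariant you propose to carry through the induction is wrong on two counts, and repairing it is exactly the content of the paper's proof. First, Lemma~\ref{LeCycleNeigborhood} does not deliver your condition (c): its conclusion is only that each component $H$ of $G-F$ admits \emph{some} cycle $C\neq C_0$ of $F$ with $G[V(H)\cup V(C)]$ $2$-connected; it asserts nothing like $N_G(H)\subseteq V(C)$. Indeed, in the lemma's own recursion (the case $|N_G(C_0)|\geq 3$, where a block-chain $G'$ is split off and one recurses on $G''=G-G'$), a component $H$ of $G''-F''$ containing the branch vertex $v$ has a $G$-neighbor $v'$ lying in the separate piece $F'$, so the $G$-neighborhood of a leftover component genuinely spreads over several components of the new material, and it can also meet $F_i$, since the construction never captures all of $N_G(F_i)$. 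Second, you misquote Lemma~\ref{LeFaithfulSubgraph2}: its hypothesis concerns $N_F(H)$, the neighborhood in the limit subgraph $F$, not $N_G(H)$. That weakening is not cosmetic; it is the only reason the theorem is provable along these lines. (Your condition (b), $N_G(F_i)\subseteq V(F_{i+1})$, is likewise not provided by Lemma~\ref{LeCycleNeigborhood}, which forces only one prescribed vertex into the cactus, and it is not needed by Lemma~\ref{LeFaithfulSubgraph2}.)

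The invariant the paper maintains instead of your (b)--(c) is: every component $H$ of $G-F_{i+1}$ has a designated cycle $C_H$ of $F_{i+1}-F_i$ with $G[V(H)\cup V(C_H)]$ $2$-connected, \emph{and} any edge of $E_G(F_{i+1},H)$ used at the next stage has its $F_{i+1}$-endpoint on $C_H$. The first half is what makes the induction self-sustaining: at the next step Lemma~\ref{LeCycleNeigborhood} is applied inside the $2$-connected graph $G_H=G[V(H)\cup V(C_H)]$ with $C_0=C_H$ — which also disposes of the ``delicate point'' you leave open about applying the lemma block-by-block, since the ambient graph is $2$-connected by the invariant, not by inspection. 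The second half is what verifies the hypothesis of Lemma~\ref{LeFaithfulSubgraph2}: because all future cactus material grown in $H$ stays inside $G_H$, every edge of the final graph $F$ leaving $V(H)$ ends on $C_H$, so $N_F(H)\subseteq V(C_H)$ even though $N_G(H)$ is not controlled at all. Your proposal discards the $2$-connected-attachment invariant in favour of the unobtainable $N_G$-confinement and contains no mechanism forcing later attachment edges onto a single cycle; consequently neither the applicability of Lemma~\ref{LeCycleNeigborhood} at the inductive step nor the hypothesis of Lemma~\ref{LeFaithfulSubgraph2} at the end is justified as written.
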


\begin{proof}
Let $G$ be a 2-connected subcubic graph, with
$V(G)=\{x_1,x_2,\ldots\}$. We construct a sequence
$\mathcal{F}=(F_i)_{i=1}^\infty$ of
finite cactus of $G$ such that for $i\geq 1$,\\
(1) $x_i\in V(F_i)$ and $F_i\leq F_{i+1}$;\\
(2) for every component $H$ of $G-F_{i+1}$, there is a cycle $C$ of
$F_{i+1}-F_i$ such that $G[V(H)\cup V(C)]$ is 2-connected; and if an
edge $uv\in E_G(F_{i+1},H)$ is in $E(F_{i+2})$, then $u\in V(C)$.

Let $F_1$ be a cycle containing $x_1$. Suppose we already have
$F_i$. Let $L$ be an arbitrary component of $G-F_i$, and let $C_L$
be a cycle of $F_i-F_{i-1}$ such that $G_L=G[V(L)\cup V(C_L)]$ is
2-connected (if $i=1$, then set $C_L=F_1$). If $x_i\in V(L)$, then
let $x_L=x_i$; otherwise let $x_L$ be an arbitrary vertex of $L$. By
Lemma \ref{LeCycleNeigborhood}, $G_L$ has a finite cactus $F_L$
containing $C_L$ and $x_L$ such that for every component $H$ of
$G_L-F_L$, there is a cycle $C$ of $F_L$ other than $C_L$ such that
$G_L[V(H)\cup V(C)]$ is 2-connected. Now let
$$F_{i+1}=F_i\cup\bigcup\{F_L: L\mbox{ is a component of }G-F_i\}.$$
It follows that $x_i\in V(F_i)$, $F_i\leq F_{i+1}$, and for every
component $H$ of $G-F_{i+1}$, there is a cycle $C$ of $F_{i+1}-F_i$
such that $G[V(H)\cup V(C)]$ is 2-connected. Moreover, by the
construction above, if an edge $uv\in E_G(F_{i+1},H)$ is in
$E(F_{i+2})$, then $u\in V(C)$.

Let $F=\bigcup_{i=1}^\infty F_i$. Clearly $\bigcup_{i=1}^\infty
V(F_i)=V(G)$ and thus $F\unlhd G$. By Lemma
\ref{LeFaithfulSubgraph2}, $F$ is a faithful spanning cactus of $G$.
\end{proof}

\subsection{Proof of Theorem \ref{ThCubic}}

We need some additional lemmas concerning the Cartesian product of
graphs. In the following lemma, we review the graph $G$ as its first
copy in the Cartesian product $G\square D$.

\begin{lemma}\label{LeFaithfulPrism}
Let $G$ be an infinite locally finite graph and $D$ be a finite
connected graph. Then $G\leq^FG\square D$.
\end{lemma}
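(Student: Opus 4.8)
The plan is to show $G \leq^F G \square D$, meaning $G$ (its first copy) is a faithful subgraph of the product. Since faithfulness has two parts, I must verify that every end of $G \square D$ contains a ray of $G$, and that for any two rays $R_1, R_2$ of $G$, $R_1 \approx_G R_2$ if and only if $R_1 \approx_{G\square D} R_2$. The key structural fact I would exploit is that $D$ is \emph{finite} and connected, so the product differs from $G$ only by finitely many ``layers''; intuitively the ends of $G \square D$ and of $G$ should be in natural bijection. A convenient tool is the projection map $p: G \square D \to G$ sending $(v,d) \mapsto v$, which is a graph homomorphism with finite fibers (each fiber has $|V(D)|$ vertices). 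I expect to reduce everything to the behavior of $p$ on rays and separators.

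First I would establish condition (1). Let $\alpha$ be an end of $G \square D$ and pick a ray $Q = w_0 w_1 w_2 \ldots$ in $\alpha$. Applying the projection $p$ to $Q$ gives a walk in $G$; since $D$ is finite, this walk visits infinitely many distinct vertices of $G$ (otherwise $Q$ would be confined to a finite set $p^{-1}(\text{finite})$, contradicting that $Q$ is a ray). From this projected walk I can extract a ray $R$ of $G$. The natural claim is that $R$, viewed inside $G \square D$ via the first copy, is equivalent to $Q$, hence lies in $\alpha$; this should follow because removing any finite $S \subseteq V(G)$ separates $R$ and $Q$ in the product only if it separates their projections, and I can relate finite separators in $G \square D$ to finite separators in $G$ through $p^{-1}$.

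For condition (2), the forward direction ($R_1 \approx_{G\square D} R_2 \Rightarrow R_1 \approx_G R_2$) is automatic since $G \leq G \square D$ (equivalence in a subgraph implies equivalence in the supergraph), as the excerpt notes for the replaceable form of condition (2). So the real content is: if $R_1 \approx_G R_2$, then $R_1 \approx_{G\square D} R_2$. Here I would take a finite $S^* \subseteq V(G \square D)$ and must show $R_1, R_2$ have tails in one component of $(G \square D) - S^*$. Letting $S = p(S^*) \subseteq V(G)$ be its (finite) projection, I know $R_1, R_2$ lie in one component $C$ of $G - S$. The crucial step is to lift connectivity from $G-S$ to $(G\square D)-S^*$: since $D$ is connected, for each $v \in C$ the whole fiber $\{v\} \times V(D)$ is connected within the product minus $S^*$ provided we avoid the finitely many fibers hit by $S^*$, and adjacent fibers over an edge of $G$ remain joined. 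I expect the main obstacle to be precisely this lifting argument—ensuring that the finitely many deleted vertices in $S^*$ cannot disconnect the product-version of the component $C$, which requires using connectedness of $D$ together with the fact that $S^*$ meets only finitely many fibers so that almost all of $C$'s fibers survive intact and stay linked along the edges of $G[C]$.

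Once both conditions are verified, faithfulness follows directly from the definition, completing the proof that $G \leq^F G \square D$. I would keep the separator bookkeeping lightweight by always passing between $S^* \subseteq V(G\square D)$ and its projection $S = p(S^*)$, and by choosing tails of $R_1, R_2$ far enough out that they avoid all fibers meeting $S^*$.
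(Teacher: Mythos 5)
Your treatment of condition (2) has the two directions swapped, and the swap hides the entire content of the lemma. Since $G$ is the subgraph and $G\square D$ the supergraph, the implication that is automatic (the one given by the paper's remark on the replaceable form of condition (2)) is $R_1\approx_G R_2\Rightarrow R_1\approx_{G\square D}R_2$: given finite $S^*\subseteq V(G\square D)$, the component of $G-(S^*\cap V(G))$ containing tails of both rays is a connected subgraph of $(G\square D)-S^*$, hence lies in one component there. What actually has to be proved is the converse, $R_1\approx_{G\square D}R_2\Rightarrow R_1\approx_G R_2$, and this is \emph{not} automatic for subgraphs in general: two disjoint rays forming the two sides of an infinite ladder are equivalent in the ladder but not in their union, so this direction genuinely needs the product structure and the finiteness of $D$. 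You label precisely this converse as ``automatic \ldots since equivalence in a subgraph implies equivalence in the supergraph,'' but the fact you cite is the implication in the opposite direction; you then spend your fiber-lifting argument proving the easy direction ($\approx_G\Rightarrow\approx_{G\square D}$), which needed no product structure at all. As written, the proposal establishes condition (1) and the trivial half of condition (2), and never proves the half that makes $G$ faithful in $G\square D$.

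The missing direction is where the paper's proof does its work, and it goes by \emph{lifting} separators rather than projecting them: given finite $S\subseteq V(G)$, set $S'=p^{-1}(S)$, which is finite because $|V(D)|<\infty$; then $(G\square D)-S'=(G-S)\square D$, so every component of $(G\square D)-S'$ has the form $H\square D$ with $H$ a component of $G-S$. By hypothesis tails of $R_1,R_2$ lie in one such $H\square D$, and since these rays live in the first copy of $G$, their tails lie in $H$ itself, giving $R_1\approx_G R_2$. (Your sketch for condition (1) is in the same spirit as the paper's --- project a ray, extract a ray of $G$, compare via $p^{-1}$ of separators --- though the extraction and the verification $R\approx_{G\square D}Q$ need the care the paper takes, e.g.\ jumping from the \emph{last} visit of each fiber so that $R$ follows the projected walk in order. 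The fatal issue, however, is the reversed direction in condition (2).)
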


\begin{proof}
Let $G'=G\square D$. For a vertex $v\in V(G')$, we use $\sigma(v)$
to denote the corresponding vertex of $G$ (i.e., $v$ is a copy of
$\sigma(v)$). Let $\alpha'\in\varOmega(G')$ and $R'=u'_1u'_2\ldots$
be a ray of $G'$ contained in $\alpha'$. We will find a ray $R$ of
$G$ with $R\in\alpha'$. Let $u_1=\sigma(u'_1)$. Suppose we already
define $u_i$. Let $j$ be the maximum integer with
$u_i=\sigma(u'_j)$, and let $u_{i+1}=\sigma(u'_{j+1})$. It is easy
to see that $R=u_1u_2\ldots$ is a ray of $G$ and $R\approx_{G'}R'$,
and thus $R\in\alpha'$.

Now let $R_1,R_2$ be two rays of $G$ with $R_1\approx_{G'}R_2$. We
will show that $R_1\approx_GR_2$. Let $S$ be a finite subset of
$V(G)$, and $S'$ be union of the copies of $S$. Thus $S'\subseteq
V(G')$ is finite, and there is a component $H'$ of $G'-S'$ such that
$R_1,R_2$ have tails in $H'$. It follows that $H'=H\square D$ for
some component $H$ of $G-S$. This implies that $H$ contains tails of
both $R_1,R_2$. Therefore $R_1\approx_GR_2$, and $G\leq^FG'$.
\end{proof}

\begin{lemma}\label{LePrismFaithful}
Let $G$ be an infinite locally finite graph, $F\leq G$, and $D$ be a
finite connected graph. If $F\leq^FG$, then $F\square
D\leq^FG\square D$.
\end{lemma}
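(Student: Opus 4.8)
The plan is to establish that $F\square D$ is faithful to $G\square D$ by verifying the two defining conditions of a faithful subgraph directly, leveraging the hypothesis that $F\leq^F G$ and the product structure. The key structural fact I would rely on is that for any finite set $S\subseteq V(G)$, if $S'$ denotes the union of the copies of $S$ across the $D$-layers, then the components of $(G\square D)-S'$ are exactly the products $H\square D$ where $H$ ranges over the components of $G-S$; this is the same observation used in the proof of Lemma~\ref{LeFaithfulPrism}. This correspondence between components upstairs and downstairs is what lets me transfer the ray-equivalence relations between the base graph and the product.

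\textbf{First} I would handle condition (1), that every end of $G\square D$ contains a ray of $F\square D$. Given an end $\alpha'$ of $G\square D$ with a ray $R'$, the projection argument from Lemma~\ref{LeFaithfulPrism} produces a ray $R$ of $G$ with $R\in\alpha'$. Since $F\leq^F G$, the graph $F$ contains a ray $R_F$ equivalent to $R$ in $G$, hence a ray in the corresponding end of $G$. The main point to check is that $R_F$, viewed inside the first copy of $F$ in $F\square D$, still lies in $\alpha'$: because $R_F\approx_G R$ and the component correspondence above shows that $G$-equivalence of rays in the first copy implies $(G\square D)$-equivalence, we get $R_F\approx_{G\square D} R\in\alpha'$, so $R_F$ is a ray of $F\square D$ in $\alpha'$.

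\textbf{Next}, for condition (2), I take two rays $R_1,R_2$ of $F\square D$ with $R_1\approx_{G\square D} R_2$ and must show $R_1\approx_{F\square D} R_2$. Projecting each $R_i$ down to $G$ via the map $\sigma$ yields rays $\bar R_1,\bar R_2$ of $F$ with $\bar R_i\approx_{G\square D} R_i$. Using the component correspondence in the reverse direction — a finite separator $S$ of $G$ lifts to $S'$, and $H\square D$ being the component containing tails of both $R_1,R_2$ forces $H$ to contain tails of $\bar R_1,\bar R_2$ — I obtain $\bar R_1\approx_G \bar R_2$. Since $F\leq^F G$, this upgrades to $\bar R_1\approx_F \bar R_2$, and finally the \emph{forward} component correspondence (applied now with separators of $F$ lifted to $F\square D$) gives $\bar R_1\approx_{F\square D}\bar R_2$, whence $R_1\approx_{F\square D} R_2$.

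\textbf{The delicate step} I expect to be the bookkeeping in condition~(2): one must be careful that projecting a ray of $F\square D$ down to $F$ genuinely produces a ray of $F$ (not merely a walk) lying in the right end, and that separators of $F$ rather than of $G$ are used at the final stage so that the conclusion is $F\square D$-equivalence. The crucial enabling observation throughout is that a finite separator $S$ of the base graph lifts to the finite separator $S'=\bigcup_{d\in V(D)}S\times\{d\}$ of the product, with the product-component structure described above; since $D$ is finite and connected, $S'$ is finite and the components match up cleanly, so every equivalence statement transfers in both directions. Once this correspondence is recorded, the verification of both faithfulness conditions reduces to applying $F\leq^F G$ and reading off the matching of components, with no further obstacle.
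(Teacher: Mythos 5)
Your proposal is correct. Every transfer step you make is supported by the component correspondence $(G\square D)-S'=(G-S)\square D$ (whose components are the graphs $H\square D$ with $H$ a component of $G-S$, using that $D$ is connected and finite), and your chain of equivalences for condition (2) — $R_i\approx_{F\square D}\bar R_i$ by projection, $\bar R_1\approx_G\bar R_2$ by the reverse correspondence, $\bar R_1\approx_F\bar R_2$ by the hypothesis $F\leq^FG$, and $\bar R_1\approx_{F\square D}\bar R_2$ by subgraph-to-supergraph monotonicity — closes the argument. The route is, however, organized quite differently from the paper's. The paper's proof is three lines and contains no ray-chasing at all: Lemma~\ref{LeFaithfulPrism} gives $F\leq^FF\square D$ and $G\leq^FG\square D$; then Lemma~\ref{LeFaithfulFaithful} (in a chain $D\leq F\leq G$, any two of the three faithfulness relations imply the third) is applied twice, first to the chain $F\leq G\leq G\square D$ to obtain $F\leq^FG\square D$, then to the chain $F\leq F\square D\leq G\square D$ to obtain $F\square D\leq^FG\square D$. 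Your direct verification essentially inlines both citations: the projection-and-correspondence machinery is precisely the content of Lemma~\ref{LeFaithfulPrism}, and routing every equivalence through the projected rays $\bar R_i$ of $F$ is exactly what the two abstract applications of Lemma~\ref{LeFaithfulFaithful} accomplish. What the paper's modular proof buys is that the delicate points you flag (that a projection is a genuine ray rather than a walk, and which separators must be used at which stage) are settled once inside the cited lemmas and never resurface; what your version buys is a self-contained argument that exhibits the mechanism explicitly, at the cost of re-deriving work the paper has already packaged.
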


\begin{proof}
By Lemma \ref{LeFaithfulPrism}, $F\leq^FF\square D$ and
$G\leq^FG\square D$. By Lemma \ref{LeFaithfulFaithful} and the fact
$F\leq^FG$, $F\leq^FG\square D$. Again by Lemma
\ref{LeFaithfulFaithful}, $F\square D\leq^FG\square D$.
\end{proof}

\begin{lemma}\label{LeDegreeEndPrism}
Let $G$ be an infinite locally finite graph, $D$ be a finite
connected graph, $G'=G\square D$. Let $\alpha$ be an end of $G$ and
$\alpha'$ be an end of $G'$ with $\alpha\subseteq\alpha'$. Then
$d(\alpha')=d(\alpha)|V(D)|$.
\end{lemma}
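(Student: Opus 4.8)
The plan is to prove the two inequalities $d(\alpha')\geq d(\alpha)\cdot|V(D)|$ and $d(\alpha')\leq d(\alpha)\cdot|V(D)|$ separately. Write $k=d(\alpha)$, let $\sigma\colon V(G')\to V(G)$ be the projection, and for $v\in V(G)$, $d\in V(D)$ write $(v,d)$ for the corresponding vertex of $G'$; the \emph{fiber} over $v$ is $\{v\}\times V(D)$, which is connected since $D$ is connected. Fix a distinguished $d_0\in V(D)$ so that $G$ is identified with the copy $\{(v,d_0):v\in V(G)\}$. Recall from Lemma \ref{LeFaithfulPrism} that this copy of $G$ is faithful to $G'$, so $\alpha\subseteq\alpha'$ means precisely that every ray of $\alpha$, viewed in the copy $d_0$, lies in $\alpha'$.

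For the lower bound, I would take $k$ vertex-disjoint rays $R_1,\dots,R_k\in\alpha$ and, for each $d\in V(D)$, let $R_i^d$ be the copy of $R_i$ in the $d$-th copy of $G$ (so $R_i=R_i^{d_0}$). These $k\cdot|V(D)|$ rays are pairwise vertex-disjoint: within a fixed copy this is inherited from the disjointness of the $R_i$, and rays in distinct copies never share a vertex. The crucial point is that all of them lie in $\alpha'$. Given a finite $S'\subseteq V(G')$, set $S=\sigma(S')$ and choose a vertex $v$ far out on $R_i$ with $v\notin S$ and with $v$ and all later vertices of $R_i$ in $C(S,\alpha)$. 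Then the tail of $R_i$ from $(v,d_0)$, the fiber $\{v\}\times V(D)$, and the tail of $R_i^d$ from $(v,d)$ all avoid $S'$ and lie in one component of $G'-S'$, whence $R_i\approx_{G'}R_i^d$. So every $R_i^d\in\alpha'$ and $d(\alpha')\geq k\cdot|V(D)|$.

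For the upper bound, I would invoke Lemma \ref{LeDegreeEnd}(3): since $d(\alpha)=k$, every finite $S\subseteq V(G)$ lies in some finite $T$ with $|N_G(C(T,\alpha))|\leq k$. Given a finite $S'\subseteq V(G')$, put $S=\sigma(S')$, choose such a $T\supseteq S$, and lift it to $T'=T\times V(D)$, which is finite and contains $S'$. Because $G'-T'=(G-T)\square D$, every component of $G'-T'$ is of the form $H\square D$ for a component $H$ of $G-T$, each connected as $D$ is connected. Since the rays of $\alpha\subseteq\alpha'$ have tails in $C(T,\alpha)$, the component carrying $\alpha'$ is exactly $C(T',\alpha')=C(T,\alpha)\square D$. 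A direct neighborhood computation then gives $N_{G'}(C(T',\alpha'))=N_G(C(T,\alpha))\times V(D)$: a vertex $(u,d)$ outside $C(T,\alpha)\square D$ has $u\notin C(T,\alpha)$, so it can only reach the component through a $G$-edge, forcing $u\in N_G(C(T,\alpha))$, while $D$-edges keep $u$ fixed and contribute nothing. Hence $|N_{G'}(C(T',\alpha'))|\leq k\cdot|V(D)|$, and Lemma \ref{LeDegreeEnd}(3)$\Rightarrow$(1) yields $d(\alpha')\leq k\cdot|V(D)|$.

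Combining the two bounds gives $d(\alpha')=k\cdot|V(D)|=d(\alpha)\cdot|V(D)|$. The main obstacle I anticipate is the bookkeeping around the product structure of $G'-T'=(G-T)\square D$: one must verify that its components are exactly the products $H\square D$, that $\alpha'$ selects $C(T,\alpha)\square D$, and that the boundary collapses cleanly to $N_G(C(T,\alpha))\times V(D)$ with no stray contribution from the $D$-direction edges. Once this product description is established, both the neighborhood bound for the upper estimate and the fiber-connectivity argument for the lower estimate are routine.
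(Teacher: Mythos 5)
Your proof is correct and follows essentially the same route as the paper's: the lower bound via the $|V(D)|$ copies of $k$ disjoint rays (you add a welcome fiber-connectivity argument that the paper leaves implicit), and the upper bound by lifting a separator $T$ to $T'=T\times V(D)$, identifying $C(T',\alpha')=C(T,\alpha)\square D$, and invoking Lemma \ref{LeDegreeEnd}. The only cosmetic difference is that you apply the $N_G$-version (3)$\Rightarrow$(1) directly, whereas the paper runs the $Z_G$-version (2) by contradiction.
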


\begin{proof}
Let $k=d(\alpha)$, $n=|V(D)|$, and let $\mathcal{R}$ be the set of
$k$ vertex-disjoint rays in $\alpha$. It follows that for each ray
$R\in\mathcal{R}$ there are $n$ copies of $R$ in $G'$. Note that the
copes of two vertex-disjoint rays are vertex-disjoint. Thus
$\alpha'$ contains at least $kn$ vertex-disjoint rays of $G'$, i.e.,
$d(\alpha')\geq kn$.

Suppose now that $d(\alpha')>kn$. By Lemma \ref{LeDegreeEnd}, there
is a finite set $S'\subseteq V(G')$ such that for every finite set
$T'\subseteq V(G')$ with $S'\subseteq T'$,
$|Z_{G'}(C(T',\alpha'))|>kn$. Let $S$ be the set of vertices in $G$
that has some copies in $S'$. Let $T\subseteq V(G)$ be a finite set
with $S\subseteq T$, and let $T'$ be the set of vertices that are
copies of vertices in $T$. Thus $|Z_{G'}(C(T',\alpha'))|>kn$.
Clearly $C(T',\alpha')=C(T,\alpha)\square D$. It follows that every
edge in $E(T',\alpha')$ is the copy of an edge in $E(T,\alpha)$.
This implies $|Z_G(C(T,\alpha))|>k$. By Lemma \ref{LeDegreeEnd},
$d(\alpha)>k$, a contradiction.
\end{proof}

\begin{lemma}\label{LeCactusHamiltonDegree}
Let $G$ be an infinite even cactus. Then (1) $G\square K_2$ has a
Hamiltonian curve; and (2) every end of $G\square K_2$ has degree 2.
\end{lemma}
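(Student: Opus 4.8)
The plan is to treat the two assertions separately: assertion (1) I would obtain from Theorem~\ref{ThKuLiTh} together with the finite result Lemma~\ref{LeCaKaRoRy}, and assertion (2) from the product formula Lemma~\ref{LeDegreeEndPrism}; the real work is to reduce each to a statement about finite even cacti and about the end degrees of $G$ itself.

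For (1), by Theorem~\ref{ThKuLiTh} it suffices to show that every finite $S\subseteq V(G\square K_2)$ lies on a cycle of $G\square K_2$. First I would let $S_0\subseteq V(G)$ be the projection of $S$ to the first factor, a finite set, and choose a finite connected subgraph $G_0\leq G$ with $S_0\subseteq V(G_0)$ (possible since $G$ is connected: take the union of finitely many paths of $G$ joining the vertices of $S_0$). Because any connected subgraph of an even cactus is again a finite even cactus---its blocks are sub-blocks of blocks of $G$, hence edges, paths, or even cycles---Lemma~\ref{LeCaKaRoRy} applies and yields a Hamiltonian cycle $H$ of $G_0\square K_2$. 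Since $G_0\square K_2\leq G\square K_2$ and $H$ meets every vertex of $G_0\square K_2$, in particular every vertex of $S$, the cycle $H$ is the desired cycle of $G\square K_2$ through $S$.

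For (2), the key reduction is the claim that \emph{every end of an even cactus $G$ has degree $1$}. To see this, fix $\alpha\in\varOmega(G)$ and an arbitrary finite $S\subseteq V(G)$. Contracting the (finite, pairwise vertex-disjoint) cycles of $G$ yields a locally finite tree $\mathcal{T}$ whose edges are exactly the edges of $G$ lying on no cycle; since $\mathcal{T}$ is an infinite locally finite tree, the ray representing $\alpha$ in $\mathcal{T}$ traverses infinitely many such edges, so there is a bridge $e=xy$ of $G$, lying beyond $S$, that separates $S$ from a tail of $\alpha$, with $x$ on the $S$-side. Writing $C^\ast$ for the component of $G-e$ containing $y$ (hence containing $\alpha$), the bridge property gives $N_G(C^\ast)=\{x\}$. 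Taking $T=S\cup\{x\}$ we get $C(T,\alpha)=C^\ast$ and $|N_G(C(T,\alpha))|\leq 1$, so $d(\alpha)\leq 1$ by Lemma~\ref{LeDegreeEnd}; as $\alpha$ is an end, $d(\alpha)=1$. Now, by Lemma~\ref{LeFaithfulPrism}, $G\leq^F G\square K_2$, so every end $\alpha'$ of $G\square K_2$ contains an end $\alpha$ of $G$ with $\alpha\subseteq\alpha'$; Lemma~\ref{LeDegreeEndPrism} with $|V(K_2)|=2$ then gives $d(\alpha')=d(\alpha)\cdot 2=2$, as required.

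I expect the main obstacle to be the end-degree claim in (2): one must correctly exploit the tree-of-cycles structure, argue that a cut-edge of the contracted tree $\mathcal{T}$ lifts to a genuine bridge of $G$ whose far side has exactly one neighbor, and select such a bridge far enough out that $S$ lies entirely on the near side. A secondary point that needs care in (1) is verifying that a finite connected subgraph of $G$ really is a finite even cactus in the sense demanded by Lemma~\ref{LeCaKaRoRy}, so that the Hamiltonian cycle of its prism is literally a cycle of $G\square K_2$ meeting $S$.
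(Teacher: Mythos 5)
Your part (2) is sound and follows essentially the same route as the paper: reduce to showing that every end of the cactus $G$ itself has degree $1$, then apply Lemma~\ref{LeDegreeEndPrism}. The paper implements the reduction slightly differently (for a finite $S$ it takes a finite sub-cactus $F\supseteq S$, enlarges it by all cycles of $G$ meeting $F$, and observes that every component of the complement then has exactly one neighbour, so Lemma~\ref{LeDegreeEnd} applies), whereas you argue via bridges of the contracted tree; both are correct, and your bridge-selection argument can be made precise exactly as you indicate.

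Part (1), however, has a genuine gap: the claim that \emph{any connected subgraph of an even cactus is again an even cactus} is false under the paper's definition, and the failure is exactly where your argument needs it. Take $G$ to be the even cycle $v_1v_2v_3v_4$ together with a pendant edge $v_1u$; this is an even cactus. With $S_0=\{u,v_2,v_4\}$, your $G_0$ may well be the union of the paths $uv_1v_2$, $uv_1v_4$ and $v_2v_1v_4$, i.e.\ the star $K_{1,3}$ with centre $v_1$. Its blocks are indeed all edges, but it is \emph{not} a cactus: the definition requires the graph to decompose into pairwise vertex-disjoint cycles and pairwise vertex-disjoint paths, and the three edges at $v_1$ cannot be covered by vertex-disjoint paths. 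This is not a definitional quibble, because $K_{1,3}\square K_2$ is not Hamiltonian: its six vertices of degree $2$ force the three paths $vaa'v'$, $vbb'v'$, $vcc'v'$ into any Hamiltonian cycle, giving the centre degree $3$ in the cycle, a contradiction. So Lemma~\ref{LeCaKaRoRy} simply cannot be applied to your $G_0$, and being a subcubic graph whose blocks are edges or even cycles is strictly weaker than being an even cactus. The repair is what the paper's phrase ``finite sub-cactus containing $S$'' encodes: after forming the union of connecting paths, add in its entirety every cycle of $G$ that this union meets. Since all branching in a cactus occurs at vertices of cycles, the enlarged subgraph decomposes into disjoint full even cycles of $G$ and disjoint subpaths of the paths of $G$, and contracting its cycles yields a connected subgraph of the tree $G/\mathcal{C}$, hence a tree; so it is a finite even cactus, Lemma~\ref{LeCaKaRoRy} applies to it, and the rest of your argument (Theorem~\ref{ThKuLiTh}, and the observation that a Hamiltonian cycle of the sub-prism is a cycle of $G\square K_2$ through $S$) goes through unchanged.
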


\begin{proof}
(1) Set $G'=G\square K_2$. For an arbitrary finite set $S'$, let $S$
be the set of vertices in $G$ that have some copies in $S'$. It
follows that $G$ has a finite sub-cactus $F$ containing $S$. By
Lemma \ref{LeCaKaRoRy}, $F\square K_2$ is Hamiltonian, implying that
$S'$ is contained in a finite cycle of $G'$. By Theorem
\ref{ThKuLiTh}, $G'$ has a Hamiltonian curve.

(2) For every finite set $S\subseteq V(G)$, $G$ has a finite
sub-cactus $F$ with $S\subseteq V(F)$. Let $D$ be the subgraph of
$G$ induced by $$V(F)\cup\{v\in V(G): v \mbox{ is contained in some
cycle } C \mbox { of } G \mbox{ such that } V(C)\cap
V(F)\neq\emptyset\}.$$ Then for every component $H$ of $G-F$,
$|N_G(H)|=1$. By Lemma \ref{LeDegreeEnd}, every end of $G$ has
degree 1. By Lemma \ref{LeDegreeEndPrism}, every end of $G\square
K_2$ has degree 2.
\end{proof}

Now we prove Theorem \ref{ThCubic}.

\begin{proof}
Let $G$ be an infinite 3-connected cubic graph. By Lemmas
\ref{LeFaithfulFaithful}, \ref{LeFaithfulBipartite} and
\ref{LeFaithfulCactus}, $G$ has a faithful spanning even cactus $F$.
By Lemma \ref{LePrismFaithful}, $F\square K_2\unlhd^FG\square K_2$.

By Lemma \ref{LeCactusHamiltonDegree}, $F\square K_2$ has a
Hamiltonian curve and every end of $F\square K_2$ has degree 2. By
Theorem \ref{ThMain}, $G\square K_2$ has a Hamiltonian circle.
\end{proof}

\section{Prisms of squares of graphs}

In \cite{KaRyKrRoVo}, Kaiser et al. also proved the following
theorems concerning the prisms of squares of graphs and prisms of
line graphs.

\begin{theorem}[Kaiser et al. \cite{KaRyKrRoVo}]\label{ThCaKaRoRy1}
If $G$ is a finite connected graph, then $G^2\square K_2$ is
Hamiltonian.
\end{theorem}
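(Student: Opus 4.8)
The plan is to reduce the statement to the even-cactus case already available as Lemma \ref{LeCaKaRoRy}. Concretely, I would prove that $G^2$ contains a spanning even cactus; since $G$ is connected it has a spanning tree $T$, and $T^2\subseteq G^2$, so it suffices to produce a spanning even cactus of the square of a tree. Because the cactus notion used here is subcubic, I must build the cactus with maximum degree at most $3$, all of whose cycles are even and whose blocks are edges or even cycles. Granting such a cactus $K\unlhd G^2$, Lemma \ref{LeCaKaRoRy} gives that $K\square K_2$ is Hamiltonian, and since $K$ is spanning, $K\square K_2$ is a spanning subgraph of $G^2\square K_2$, so any Hamiltonian cycle of the former is a Hamiltonian cycle of the latter.

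The heart of the argument is the construction inside $T^2$, which I would carry out by induction on $|V(T)|$ (assuming $|V(G)|\geq 3$; the cases $|V(G)|\leq 2$ are immediate, e.g.\ $K_2\square K_2=C_4$). Root $T$ and pick a deepest non-leaf vertex $v$, so that all children $\ell_1,\dots,\ell_k$ of $v$ are leaves. In $T^2$ these leaves are pairwise adjacent (they lie at distance $2$ in $T$) and each is adjacent to $v$, so $\{v,\ell_1,\dots,\ell_k\}$ induces a clique. I would thread the leaves into a single even cycle through $v$: if $k$ is odd, use the cycle $v\ell_1\ell_2\cdots\ell_k v$ of even length $k+1$; if $k$ is even, use the even cycle $v\ell_2\ell_3\cdots\ell_k v$ and hang $\ell_1$ as a pendant edge on $\ell_2$. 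In either case $v$ keeps one free unit of degree for attachment to the rest of $T$ through its parent, every $\ell_i$ has degree at most $3$, and the new block is an even cycle. Deleting $\ell_1,\dots,\ell_k$ (and $v$, should it become a leaf) and applying the induction hypothesis to the smaller tree, then gluing the locally built even cycle to the inductively obtained cactus along the single parent edge at $v$, yields a spanning subcubic even cactus of $T^2$.

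The main obstacle is the simultaneous control of three constraints throughout the induction: the subcubic degree bound, the evenness of every cycle, and the global cactus structure (pairwise vertex-disjoint cycles whose contraction leaves a tree). The delicate points are vertices carrying both leaf-children and a non-leaf subtree-child, where the leaf-clique must be threaded without spending the degree needed to reach downward into the already-built part, and the parity correction, which forces the pendant-edge device and must never create a second cycle through any vertex. I expect to handle these by strengthening the induction hypothesis so that the returned cactus designates at each active vertex a prescribed free edge-slot, that is, a d-vertex in the sense of the paper's cactus terminology, through which the next gluing is performed, exactly as in the proof of Lemma \ref{LeBlockChain}; this makes every merge a tree-like identification and preserves all three invariants, after which Lemma \ref{LeCaKaRoRy} finishes the proof.
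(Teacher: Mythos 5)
First, a point of orientation: the paper never proves Theorem~\ref{ThCaKaRoRy1} --- it is quoted from Kaiser et al.~\cite{KaRyKrRoVo}; what the paper proves (in Section~6) is the infinite analogue. Your outer reduction does match the paper's machinery for that analogue: pass to a spanning tree $T$, work inside $T^2\subseteq G^2$, produce a spanning cactus-like subgraph, and finish with the \v{C}ada et al.\ prism result, using the fact that a Hamiltonian cycle of a spanning subgraph's prism is one of $G^2\square K_2$. But there is a decisive divergence: the paper does \emph{not} build a spanning even cactus of $T^2$; it builds a spanning even \emph{semi-cactus} (cases (a)--(g) in the Section~6 proof), exactly because its construction cannot avoid a vertex lying on two cycles (case (g): a vertex with an even number $d\geq 4$ of sons whose first son again has an even number $d_1\geq 2$ of sons), and it then needs the separate Lemma~\ref{LeSemiHamilton} (cleaving degree-4 vertices) to get back to Lemma~\ref{LeCaKaRoRy}. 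Your plan insists on staying inside the class of cacti, and that is where it breaks.

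The gap is concrete. In the paper's cactus definition the paths of $\mathcal{P}$ are pairwise vertex-disjoint, so every vertex of degree $3$ in a cactus must lie on a cycle. Now take $T$ with root $p$, children $v_1,v_2,v_3$, each $v_i$ having three leaf children. Your recursion turns each star into a $4$-cycle through $v_i$ and its leaves, deletes it, bottoms out at $\{p\}$, and then must glue all three cycles back ``along the single parent edge at $v$'', i.e.\ via $v_1p,v_2p,v_3p$. After two gluings $p$ has degree $2$ and lies on no cycle (so it is already a c-vertex); the third gluing creates a degree-$3$ vertex on no cycle, which is impossible in a cactus. Your strengthened hypothesis (one designated free slot / d-vertex) cannot repair this: $p$ would need three free slots, which no cactus vertex not on a cycle can offer. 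A correct spanning even cactus for this tree exists but looks structurally different (e.g.\ the $4$-cycle $pv_1v_2v_3p$ with the nine leaves hung as three paths, or a routing that uses the $T^2$-edges from $p$ to its grandchildren), i.e.\ the construction must coordinate across two levels of the tree, which is precisely what the paper's cases (e)--(g) do --- and even with that lookahead the paper found it necessary to leave the class of cacti. So the claim your proof rests on, that $T^2$ always has a spanning even cactus obtainable by your local leaf-threading move, is both unproven and false as a procedure on small examples. (Minor additional slip: for $k=2$ your ``even cycle $v\ell_2v$'' is a digon; you need a path there.) The clean repair is to follow the paper: run the Section~6 construction (which is purely local and finite) to get a spanning even semi-cactus of $T^2$, and replace Lemma~\ref{LeCaKaRoRy} by Lemma~\ref{LeSemiHamilton}; the rest of your reduction then goes through verbatim.
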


\begin{theorem}[Kaiser et al. \cite{KaRyKrRoVo}]\label{ThCaKaRoRy2}
If $G$ is a finite 2-connected line graph, then $G\square K_2$ is
Hamiltonian.
\end{theorem}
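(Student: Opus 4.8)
The plan is to strip the topology from the problem entirely and deduce Theorem~\ref{ThCaKaRoRy2} from the even-cactus lemma, Lemma~\ref{LeCaKaRoRy}. Concretely, it suffices to prove the purely structural statement that every finite $2$-connected line graph $G$ contains a \emph{spanning} even cactus $F$: then $F\square K_2$ is a spanning subgraph of $G\square K_2$, Lemma~\ref{LeCaKaRoRy} makes $F\square K_2$ Hamiltonian, and a Hamiltonian cycle of a spanning subgraph of a finite graph is a Hamiltonian cycle of the whole graph. So everything reduces to producing the spanning even cactus.

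To construct it I would pass to the root graph, writing $G=L(H)$; the $2$-connectivity of $G$ translates into a strong connectivity condition on $H$ (roughly, $H$ is essentially $2$-edge-connected). The bridge between the two pictures is that a closed trail $e_1e_2\cdots e_me_1$ of $H$ lifts to a cycle of length $m$ in $L(H)$, and that any edge $f$ of $H$ meeting a vertex of that trail becomes, in $L(H)$, a neighbour of the corresponding cycle vertex and so can be hung on as the end of a pendant path. I would therefore build inside $H$ a ``tree of closed trails'' that dominates every edge of $H$ and whose contraction is a tree; lifting it to $L(H)$ yields a spanning cactus of $G$ whose cycles are the lifted trails and whose pendant paths absorb the remaining edges. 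Note that I deliberately use a \emph{tree} of trails rather than a single dominating closed trail: the latter would force $G$ itself to be Hamiltonian, which is more than is being claimed.

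The real work is to meet two requirements simultaneously, and this is where I expect the main difficulties. First, the cactus must be \emph{subcubic}: a vertex of large degree in $H$ makes a single cycle vertex of $L(H)$ adjacent to many edges at once, so the pendants must be spread out---using longer pendant paths and, when necessary, splitting one trail into several vertex-disjoint trails---so that no chosen vertex exceeds degree $3$. Second, every lifted cycle must be \emph{even}. Controlling the parity of the trail lengths is the genuine obstacle, and it plays exactly the role that the detour through $2$-connected bipartite subgraphs played in the cubic case (Lemmas~\ref{LeCoBiSu}--\ref{LeConnectedBipartite}); the point to establish is that the connectivity of $H$ leaves enough slack to reroute an odd trail into an even one without spoiling either the domination or the subcubic bound.

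Once the spanning even cactus $F$ is secured, Lemma~\ref{LeCaKaRoRy} finishes the finite theorem at once. I would finally remark that $F$ is precisely the object needed for the infinite analogue: a \emph{faithful} spanning even cactus, fed through Lemmas~\ref{LePrismFaithful} and~\ref{LeCactusHamiltonDegree} together with Theorem~\ref{ThMain}, would yield a Hamiltonian circle of $G\square K_2$, following verbatim the argument already used for Theorem~\ref{ThCubic}.
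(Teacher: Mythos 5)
The first thing to note is that the paper does not prove Theorem~\ref{ThCaKaRoRy2} at all: it is quoted from Kaiser et al.\ \cite{KaRyKrRoVo} as a known finite result, and the paper's own contribution is the infinite analogue proved in Section~7. So your proposal can only be measured against the machinery the paper builds there, and against that standard it has a genuine gap. Your entire reduction rests on the claim that every finite $2$-connected line graph contains a spanning even \emph{cactus} in the paper's strict sense --- subcubic, with pairwise vertex-disjoint cycles --- since that is what Lemma~\ref{LeCaKaRoRy} requires. Everything after that claim is routine, but the claim itself \emph{is} the theorem, and your sketch explicitly defers its two essential constraints (the subcubic bound and the evenness of the lifted trails) to ``the real work''. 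No argument is given for why a tree of closed trails in the root graph $H$ can be chosen to meet both constraints simultaneously; that is exactly where a proof would have to live.

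There is moreover a structural reason to doubt that the strict cactus is the right target. In $L(H)$ the edges incident with a single vertex $v$ of $H$ form a clique, so the cycles lifted from two closed trails meeting at $v$ naturally \emph{share} a vertex of $L(H)$. This is why the paper introduces the weaker notion of a semi-cactus (every block a cycle or a $K_2$, every vertex in at most two blocks, $\varDelta\leq 4$) and proves Lemma~\ref{LeSemiHamilton} --- by cleaving the degree-$4$ vertices --- to pass from an even semi-cactus to a Hamiltonian prism. Lemma~\ref{LeBlockSemiCactus}, which is the paper's analogue of your structural claim, only produces a spanning even \emph{semi}-cactus of the line graph, not a cactus, and nothing in the paper (or, as far as one can tell, in the cited literature) yields the vertex-disjoint-cycles version you need. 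So your plan demands strictly more than the available structure theory delivers, with no indication of how to obtain it, and it may simply be false for some $2$-connected line graphs. The repair is clear: relax your target to a spanning even semi-cactus and invoke Lemma~\ref{LeSemiHamilton} in place of Lemma~\ref{LeCaKaRoRy}; with that substitution your outline becomes, in both the finite and the infinite setting, essentially the paper's own route.
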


In the following two sections, we will extend the two results to
Hamiltonian circles. Before doing this, we first introduce a concept
semi-cactus.

A \emph{semi-cactus} is a connected graph $G$ such that\\
(1) the maximum degree $\varDelta(G)\leq 4$;\\
(2) every block of $G$ is either a cycle or a $K_2$; and\\
(3) every vertex is contained in one or two blocks of $G$.\\
Similarly as the case of cactus, we can see that every d-vertex of a
semi-cactus $G$ is either of degree 1 or of degree 2 and contained
in a cycle. If $G_1,G_2$ are two disjoint semi-cactus and $v_1,v_2$
are d-vertices of $G_1,G_2$, respectively, then the graph $G$
obtained from $G_1\cup G_2$ by identifying the two vertices
$v_1,v_2$ is also a semi-cactus. The semi-cactus $G$ is \emph{even}
if each cycle of $G$ is even.

Let $v$ be a vertex of $G$. The \emph{cleaving} of $u$ amounts to
replacing the vertex $u$ with two new vertices $u_1,u_2$, making
each edge incident with $u$ incident with exactly one of $u_1,u_2$,
and adding a new edge $e_u=u_1u_2$. Note that cleaving is the
reverse of edge contraction.

\begin{lemma}\label{LeSemiHamilton}
Let $G$ be a finite even semi-cactus, then $G\square K_2$ is
Hamiltonian.
\end{lemma}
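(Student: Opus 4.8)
The plan is to reduce an even semi-cactus to an even cactus by \emph{cleaving}, and then invoke the \v{C}ada--Kaiser--Rohnyi--Ryj\'a\v{c}ek result (Lemma~\ref{LeCaKaRoRy}). A semi-cactus differs from a cactus only in that two cycle-blocks may share a single vertex: such a shared vertex lies in exactly two blocks, both cycles, and therefore has degree $4$, which is precisely where condition $\varDelta\le 3$ and the disjointness of cycles can fail. I would first cleave every vertex $u$ of $G$ that is contained in two cycle-blocks, replacing $u$ by $u_1,u_2$ joined by the new edge $e_u=u_1u_2$, with $u_1$ taking the two edges of one cycle and $u_2$ those of the other. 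Let $G'$ be the resulting graph. I would then check that $G'$ is a finite \emph{even} cactus: cleaving separates the two cycles meeting at $u$, so all cycles of $G'$ are pairwise vertex-disjoint; the cycle lengths are unchanged, so $G'$ is still even; every remaining vertex now has degree at most $3$, so $G'$ is subcubic; and the blocks of $G'$ are exactly the (disjoint) cycles together with the $K_2$-blocks of $G$ and the new bridges $e_u$, so contracting all cycles of $G'$ yields the block-cut tree structure, i.e.\ a tree. Thus the cactus conditions hold. (If $G$ has no degree-$4$ vertex it is already an even cactus and no cleaving is needed.)

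By Lemma~\ref{LeCaKaRoRy}, $G'\square K_2$ has a Hamiltonian cycle $H'$. The key step is the following parity observation. Each cleaving edge $e_u$ is a \emph{bridge} of $G'$: deleting it separates $G'$ into $A$ (containing $u_1$) and $B$ (containing $u_2$). In the prism, the only edges between $A\square K_2$ and $B\square K_2$ are the two copies $e_u^{(0)}$ and $e_u^{(1)}$ of $e_u$, since the $K_2$-edges stay within a single copy of $G'$. Hence $\{e_u^{(0)},e_u^{(1)}\}$ is an edge cut of $G'\square K_2$ of size $2$ with both sides nonempty. Any cycle meets an edge cut in an even number of edges, and a Hamiltonian cycle must cross this cut; therefore $H'$ uses \emph{both} $e_u^{(0)}$ and $e_u^{(1)}$. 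This holds simultaneously for every cleaved vertex, and the family $\{e_u^{(0)},e_u^{(1)}:u\text{ cleaved}\}$ is a matching in $G'\square K_2$ (distinct cleaved vertices give vertex-disjoint edges, and $e_u^{(0)},e_u^{(1)}$ are disjoint).

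Finally, $G\square K_2$ is obtained from $G'\square K_2$ by contracting, for each cleaved $u$, both $e_u^{(0)}$ and $e_u^{(1)}$ (contraction is the inverse of cleaving). Since $H'$ traverses every such edge and these edges form a matching lying on the single cycle $H'$, contracting them shortens $H'$ into a single cycle $H$: at each merged vertex $u^{(j)}$ the two non-$e_u$ edges that $H'$ used at $u_1^{(j)}$ and $u_2^{(j)}$ become the two edges of $H$ at $u^{(j)}$, so $u^{(j)}$ is met exactly once, while all other vertices keep their incidences. Thus $H$ is a Hamiltonian cycle of $G\square K_2$. I expect the main obstacle to be the routine but careful verification that $G'$ genuinely satisfies all three defining conditions of a cactus after cleaving (especially that the bridges can be organized into vertex-disjoint paths and that the cycle-contraction yields a tree); the parity/cut argument forcing $H'$ to use both copies of each bridge is the clean conceptual heart and requires no delicate bookkeeping.
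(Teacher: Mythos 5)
Your proposal is correct and follows essentially the same route as the paper's proof: cleave each degree-$4$ vertex so that the two edges of each cycle stay together, apply Lemma~\ref{LeCaKaRoRy} to the resulting even cactus, use the size-$2$ edge cut formed by the two prism copies of each cleaving edge to force the Hamiltonian cycle through both copies, and contract them back. Your write-up is in fact more detailed than the paper's at the verification steps (that $G'$ is an even cactus, and that contracting the matching of used edges yields a single Hamiltonian cycle), but the argument is the same.
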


\begin{proof}
Let $u$ be an arbitrary vertex with $d_G(u)=4$, let
$N_G(u)=\{v_1,w_1,v_2,w_2\}$ such that $v_i,w_i$ are contained in a
cycle, $i=1,2$. We cleave $u$ such that $u_i$ is adjacent to
$v_i,w_i$ for $i=1,2$. Let $G'$ be the graph obtained from $G$ by
cleaving each vertex of $G$ with degree 4 as above. Then $G'$ is an
even cactus. By Lemma \ref{LeCaKaRoRy}, $G'\square K_2$ has a
Hamiltonian cycle $C'$. For the edge $e_u$ obtained by cleaving a
vertex $u$ with $d_G(u)=4$, the two copies of $e_u$ in $G'\square
K_2$ form an edge-cut of $G'\square K_2$. This implies that $C'$
passing through both copies of $e_u$. Now let $C$ be the cycle
obtained from $C'$ by contracting the copies of each added edge
$e_u$ with $d_G(u)=4$. It follows that $C$ is a Hamiltonian cycle of
$G\square K_2$.
\end{proof}

Similarly as Lemma \ref{LeCactusHamiltonDegree}, we have the
following properties of semi-cacti.

\begin{lemma}\label{LeSemiCactusHamiltonDegree}
Let $G$ be an infinite even semi-cactus. Then (1) $G\square K_2$ has
a Hamiltonian curve; and (2) every end of $G\square K_2$ has degree
2.
\end{lemma}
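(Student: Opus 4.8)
The plan is to mirror the proof of Lemma \ref{LeCactusHamiltonDegree} essentially verbatim, substituting the even-semi-cactus Hamiltonicity result (Lemma \ref{LeSemiHamilton}) for the even-cactus one (Lemma \ref{LeCaKaRoRy}), so the main work is to confirm that the two structural facts about cacti that were used carry over to semi-cacti. The key observation making this possible is that semi-cacti share the crucial locality property: if $G$ is an infinite even semi-cactus, then every finite set $S\subseteq V(G)$ is contained in a \emph{finite} even sub-semi-cactus $F$ of $G$, and moreover this $F$ can be chosen so that its components in the complement attach to $F$ in a controlled way. This follows because a semi-cactus is a tree-like amalgamation of cycles and $K_2$'s (every block is a cycle or a $K_2$, and every vertex lies in at most two blocks), so one can take the union of a finite connected collection of blocks containing $S$.

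For part (1), I would argue exactly as before: set $G'=G\square K_2$, fix an arbitrary finite $S'\subseteq V(G')$, let $S$ be the set of vertices of $G$ having a copy in $S'$, and produce a finite even sub-semi-cactus $F$ of $G$ with $S\subseteq V(F)$. By Lemma \ref{LeSemiHamilton}, $F\square K_2$ is Hamiltonian, so $S'$ lies on a finite cycle of $G'$; then Theorem \ref{ThKuLiTh} yields a Hamiltonian curve of $G'$. For part (2), I would again follow Lemma \ref{LeCactusHamiltonDegree}(2): given finite $S\subseteq V(G)$, take a finite even sub-semi-cactus $F\supseteq S$ and enlarge it to a subgraph $D$ by adjoining every vertex lying on a cycle that meets $V(F)$; the aim is to show that every component $H$ of $G-F$ (or of $G-D$) satisfies $|N_G(H)|=1$, whence Lemma \ref{LeDegreeEnd} forces every end of $G$ to have degree $1$, and Lemma \ref{LeDegreeEndPrism} with $|V(K_2)|=2$ gives that every end of $G\square K_2$ has degree $2$.

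The hard part will be verifying the end-degree bound $|N_G(H)|=1$ in part (2), because a semi-cactus is genuinely more complex than a cactus: it admits degree-$4$ vertices lying in two distinct cycles, so a component $H$ of the complement could in principle attach to $F$ through a shared cut-vertex in a way that a pure cactus cannot exhibit. I would handle this by examining how a component $H$ of $G-D$ connects back: since every block of $G$ is a cycle or a $K_2$ and the block-cut-tree structure is tree-like, the frontier between $H$ and $D$ must pass through a single cut-vertex of $G$; the definition of $D$ (absorbing whole cycles meeting $F$) is designed precisely to cut $H$ off at exactly one attachment vertex. I would make this precise by invoking the tree obtained from the block-cut structure (analogous to condition (3) in the cactus definition) and arguing that any two edges from $H$ to $D$ would create a second route, contradicting the tree-likeness or the maximality of the absorbed cycles.

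In summary, the proof is a routine transcription of Lemma \ref{LeCactusHamiltonDegree} once one establishes (a) finite-exhaustion of an infinite even semi-cactus by finite even sub-semi-cacti and (b) the single-vertex-neighborhood property of complementary components after absorbing cycles; the only genuinely new bookkeeping is checking that the degree-$4$ cut-vertices permitted in semi-cacti do not spoil (b).
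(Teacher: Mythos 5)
Your proposal matches the paper's approach exactly: the paper gives no separate proof of this lemma, saying only that it holds ``similarly as Lemma \ref{LeCactusHamiltonDegree}'', and your transcription---substituting Lemma \ref{LeSemiHamilton} for Lemma \ref{LeCaKaRoRy} in part (1), and in part (2) absorbing all cycles meeting $F$ into $D$ and showing each component of $G-D$ attaches at a single vertex before invoking Lemmas \ref{LeDegreeEnd} and \ref{LeDegreeEndPrism}---is precisely the intended argument. Your block-tree/block-maximality justification of the single-attachment-vertex claim correctly addresses the degree-4 cut-vertices shared by two cycles, which is the only point where semi-cacti genuinely differ from cacti.
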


A rooted spanning tree $T$ of a graph $G$ is \emph{normal} if the
two vertices of every edge in $E(G)$ are comparable in the
tree-order of $T$. The \emph{normal rays} of $T$ are those starting
at the root of $T$. From the following lemma, one can see that a
normal spanning tree of $G$ is faithful to $G$.

\begin{lemma}[Diestel \cite{Di16}]\label{LeNormalFaithful}
If $T$ is a normal spanning tree of $G$, then every end of $G$
contains exactly one normal ray of $T$.
\end{lemma}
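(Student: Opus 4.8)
The plan is to prove both halves of the statement — that every end contains \emph{at least} one and \emph{at most} one normal ray — using the standard separation property of normal trees, which I would record first. Write $\le$ for the tree-order of $T$ with root $r$, and for $t\in V(T)$ put $\lceil t\rceil=\{s:s\le t\}$ (the ancestors of $t$) and $\lfloor t\rfloor=\{s:s\ge t\}$ (the descendants of $t$); each $\lceil t\rceil$ is finite, being the vertex set of the $r$--$t$ path in $T$. The key fact, immediate from normality, is: ($\ast$) if $S\subseteq V(G)$ is down-closed (that is, $S=\bigcup_{t\in S}\lceil t\rceil$), then the components of $G-S$ are exactly the sets $\lfloor y\rfloor$ with $y$ ranging over the minimal vertices of $T-S$. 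Indeed each $\lfloor y\rfloor$ is a subtree, hence connected, and is disjoint from $S$; and any $G$-edge leaving $\lfloor y\rfloor$ has comparable endpoints by normality, so its lower endpoint is a proper ancestor of $y$ and therefore lies in $S$. This leaves no edge from $\lfloor y\rfloor$ to another part of $G-S$.

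For uniqueness, note first that a ray of $T$ starting at $r$ can never step back to a parent, so every normal ray is a strictly increasing chain $r=v_0<v_1<\cdots$. Two distinct normal rays $R_1,R_2$ then agree on an initial segment and split at a last common vertex $t$, after which they enter two distinct children $v\neq w$ of $t$. The tail of $R_1$ past $t$ lies in $\lfloor v\rfloor$ and that of $R_2$ in $\lfloor w\rfloor$. Applying ($\ast$) to the finite down-closed set $S=\lceil t\rceil$, both $v$ and $w$ are minimal in $T-S$ (their parent $t$ lies in $S$), so $\lfloor v\rfloor$ and $\lfloor w\rfloor$ are \emph{distinct} components of $G-\lceil t\rceil$. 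Hence $R_1\not\approx_G R_2$, and an end can contain at most one normal ray.

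For existence, given an end $\omega$ I would build a normal ray in $\omega$ greedily. Start at $x_0=r$; since $\lfloor r\rfloor=V(G)$, the end $\omega$ has a tail in $\lfloor x_0\rfloor$. Given a chain $x_0<\cdots<x_i$ with $\omega$ having a tail in $\lfloor x_i\rfloor$, apply ($\ast$) to $S_i=\lceil x_i\rceil$: the far part of that tail lies in $\lfloor x_i\rfloor\setminus\{x_i\}=\bigcup_{c}\lfloor c\rfloor$ (union over children $c$ of $x_i$), so the component $C(S_i,\omega)$ equals $\lfloor c\rfloor$ for one child $c$, which I take as $x_{i+1}$. This yields a normal ray $R=x_0x_1\cdots$, and I claim $R\approx_G\omega$. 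It suffices to match tails of $R$ and $\omega$ within $G-S$ for a cofinal family of finite $S$, and I would use the down-closed sets: the down-closure $\bar S=\bigcup_{v\in S}\lceil v\rceil$ of a finite $S$ is again finite, and lying in the same component of $G-\bar S$ forces lying in the same component of the larger graph $G-S$. For finite down-closed $\bar S$ the chain $(x_i)$ meets $\bar S$ in an initial segment ending at some $x_k$ (any nonempty down-closed set contains $r=x_0$), so $x_{k+1}$ is minimal in $T-\bar S$, whence $\lfloor x_{k+1}\rfloor\cap\bar S=\emptyset$ and, by ($\ast$), $\lfloor x_{k+1}\rfloor$ is a single intact component of $G-\bar S$. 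Since $S_k\subseteq\bar S$ gives $C(\bar S,\omega)\subseteq C(S_k,\omega)=\lfloor x_{k+1}\rfloor$, both $R$ and $\omega$ have tails in $\lfloor x_{k+1}\rfloor$, so they agree in $G-\bar S$.

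The routine parts here are the construction of $(x_i)$ and the separation fact ($\ast$); the genuinely delicate step is checking that the greedily built ray lands in $\omega$ against \emph{all} finite separators, not merely the down-closed ones used to choose the $x_i$. The clean way through is the reduction to $\bar S$ — observing that testing equivalence against the larger set $\bar S$ is only harder, and that down-closedness of $\bar S$ keeps $\lfloor x_{k+1}\rfloor$ a single uncut component of $G-\bar S$. That is the crux of the argument; once it is in place, existence and uniqueness combine to give exactly one normal ray per end.
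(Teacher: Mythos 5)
The paper does not prove this lemma at all --- it is quoted directly from Diestel's book \cite{Di16} --- so there is no internal proof to compare against. Your argument is correct and complete, and it is essentially the standard textbook proof: your separation fact $(\ast)$ (components of $G-S$ for down-closed $S$ are the down-sets $\lfloor y\rfloor$ of the minimal vertices of $T-S$) is exactly Diestel's normal-tree separation lemma, uniqueness follows by separating two normal rays at their last common vertex $t$ via the finite set $\lceil t\rceil$, and existence follows by the greedy descent through components $C(S_i,\omega)=\lfloor x_{i+1}\rfloor$. Your reduction of an arbitrary finite separator $S$ to its finite down-closure $\bar S$ is the right way to close the one genuinely delicate step, so nothing is missing.
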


One can see that the normal spanning tree has a nice property for
the infinite graphs. From the following theorem, we can always find
a normal spanning tree in infinite locally finite connected graphs.

\begin{theorem}[Jung \cite{Ju}]\label{ThJu}
Every infinite countable connected graph has a normal spanning tree.
\end{theorem}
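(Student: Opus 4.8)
The plan is to construct the normal spanning tree as an increasing union $T=\bigcup_{n\ge 0}T_n$ of \emph{finite} normal trees sharing a common root. First I would fix an enumeration $V(G)=\{v_0,v_1,v_2,\ldots\}$, set the root $r=v_0$ and $T_0=\{r\}$, and arrange the induction so that $v_n\in V(T_n)$ for every $n$. Keeping each $T_n$ finite is the device that makes the argument work uniformly for all countable graphs, not merely the locally finite ones: finiteness will force the relevant neighbourhoods to have maxima even when $G$ has vertices of infinite degree.

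The key preliminary fact I would establish is a \emph{chain lemma}: if $T$ is a normal tree in $G$ and $C$ is a component of $G-V(T)$, then the neighbourhood $N_G(C)$ is a chain in the tree-order of $T$. I would deduce this from the basic separation property of normal trees, namely that two incomparable vertices $x,y\in V(T)$ are separated in $G$ by the set of their common ancestors, which lies entirely in $T$. If two vertices of $N_G(C)$ were incomparable, then joining them by a path through the connected set $C$ would give an $x$--$y$ path whose interior avoids $V(T)$, hence avoids every common ancestor, contradicting the separation property. Thus any two neighbours of $C$ are comparable.

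The inductive step then runs as follows. Given a finite normal tree $T_n$, if $v_{n+1}\in V(T_n)$ I set $T_{n+1}=T_n$; otherwise $v_{n+1}$ lies in some component $C$ of $G-V(T_n)$. By the chain lemma $N_G(C)$ is a chain, and since $T_n$ is finite this chain has a maximum $x$. I attach to $T_n$ a finite $x$--$v_{n+1}$ path $P$ whose interior runs inside $C$, hanging $P$ off $x$ so that its new vertices become descendants of $x$. The crucial verification is that $T_{n+1}=T_n\cup P$ is again normal: edges inside $T_n$ or along $P$ are fine, and any $G$-edge from an interior vertex $c$ of $P$ to $T_n$ lands in $N_G(C)$, hence at a vertex $w\le x<c$, so $w$ and $c$ are comparable. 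Finally $T=\bigcup_n T_n$ is connected and acyclic (a cycle would be finite, hence already contained in some $T_n$), spans $G$ since each $v_n$ is captured, and is normal because both endpoints of any $G$-edge lie in a common $T_n$, where they are comparable, and comparability is preserved under the nested union.

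The main obstacle is the chain lemma together with the normality check in the inductive step: everything hinges on $N_G(C)$ being a chain \emph{with a maximum}, since that single vertex $x$ is what guarantees that all edges from the newly added path back into $T_n$ respect the tree-order. I expect the separation property of normal trees to require the most care to state and prove cleanly; once it is in hand, the finiteness of each $T_n$ supplies the maximum and the rest of the construction is routine bookkeeping.
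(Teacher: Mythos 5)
The paper itself does not prove this statement at all: Theorem \ref{ThJu} is quoted from Jung's 1969 paper, so there is no in-paper argument to compare against, and your construction (an increasing union of finite normal trees, attached along components) is indeed the standard proof of the countable case. However, as written your argument has a genuine gap, and it sits exactly at the point you flag as crucial. For the intermediate trees $T_n$, which are \emph{not} spanning, the chain lemma and the separation property are simply false if ``normal'' means what the paper's definition (and your inductive verification) says, namely that the two ends of every \emph{edge} of $G$ joining vertices of $T$ are comparable. Take $G$ to be the $4$-cycle with edges $rx, xc, cy, yr$, and let $T$ be the path $x$--$r$--$y$ rooted at $r$: the only $G$-edges inside $V(T)$ are $rx$ and $ry$, whose ends are comparable, yet $C=\{c\}$ is a component of $G-V(T)$ with $N_G(C)=\{x,y\}$ an antichain, and the path $xcy$ avoids the common ancestor $r$. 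The correct invariant for non-spanning trees is stronger: the two ends of every \emph{$T_n$-path} in $G$ (a path meeting $T_n$ exactly in its two endpoints) must be comparable; that is what makes your chain lemma true. Your ``crucial verification'' that $T_{n+1}=T_n\cup P$ is normal only checks edges from interior vertices of $P$ back to $T_n$, so the invariant you actually maintain is the weak (edge) one, and the appeal to the chain lemma for $T_{n+1}$ at the next step of the induction is then unjustified: the induction does not close.

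The repair is routine and uses precisely the argument you already gave, applied to $T$-paths instead of edges. A $T_{n+1}$-path with both ends in $V(T_n)$ has interior avoiding $V(T_n)$, hence is a $T_n$-path and has comparable ends by the induction hypothesis; a $T_{n+1}$-path with both ends among the new vertices has both ends on the chain $P$; and a $T_{n+1}$-path from $u\in V(T_n)$ to a new vertex $c$ has its interior contained in $C$ (it is connected, avoids $V(T_n)$, and meets the component $C$ at $c$), so $u\in N_G(C)$ and therefore $u\le x<c$. With the invariant stated in this $T$-path form, every step of your outline goes through, and the resulting spanning tree is normal in the paper's (edge) sense, since for a spanning tree the two notions coincide.
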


Recall that the $k$-th power of a graph $G$ is the graph obtained
from $G$ by adding an edge between each two vertices with distance
at most $k$ in $G$.

\begin{lemma}\label{LeFaithfulPower}
For any infinite locally finite connected graph $G$ and an integer
$k\geq 1$, $G\unlhd^FG^k$.
\end{lemma}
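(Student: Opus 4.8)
The plan is to invoke the characterization of faithful connected spanning subgraphs derived from the Star-Comb Lemma (Lemma~\ref{LeComb}): since $G$ is connected and shares its vertex set with $G^k$, it is trivially a connected spanning subgraph of $G^k$, so to conclude $G\unlhd^FG^k$ it suffices to show that for any two rays $R_1,R_2$ of $G$, $R_1\approx_{G^k}R_2$ implies $R_1\approx_GR_2$. I shall establish the contrapositive: if $R_1\not\approx_GR_2$, then $R_1\not\approx_{G^k}R_2$.

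Accordingly, suppose $R_1\not\approx_GR_2$. Then there is a finite set $S\subseteq V(G)$ and distinct components $C_1,C_2$ of $G-S$ such that $R_i$ has a tail in $C_i$ for $i=1,2$. The key construction is to enlarge $S$ to its closed $(k-1)$-neighborhood $S'=\{v\in V(G):d_G(v,S)\le k-1\}$. Because $G$ is locally finite and $S$ is finite, $S'$ is again finite, so each ray still has a tail avoiding $S'$.

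The crux of the argument is the claim that no edge of $G^k$ joins two vertices of $V(G)\setminus S'$ lying in different components of $G-S$. Indeed, an edge $uv$ of $G^k$ is witnessed by a $G$-path $P$ from $u$ to $v$ of length at most $k$; if $u,v$ lie in different components of $G-S$, then $P$ meets $S$ in some vertex $w$, and splitting $P$ at $w$ yields $d_G(u,S)+d_G(v,S)\le d_G(u,w)+d_G(w,v)\le k$. On the other hand, $u,v\notin S'$ forces $d_G(u,S)\ge k$ and $d_G(v,S)\ge k$, so $2k\le k$, impossible for $k\ge 1$. This proves the claim, and hence every component of $G^k-S'$ is contained in a single component of $G-S$.

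To finish, I would observe that each $R_i$ has a tail that lies both in $C_i$ and in $V(G)\setminus S'$ (take the tail far enough along to avoid the finite set $S'$ while remaining in $C_i$); such a tail lies in $C_i\setminus S'$, and by the claim it belongs to a component of $G^k-S'$ contained in $C_i$. Since $C_1\neq C_2$, the tails of $R_1$ and $R_2$ occupy distinct components of $G^k-S'$, so $R_1\not\approx_{G^k}R_2$, as desired. The only delicate point is the distance inequality in the claim; everything else is routine, and I expect no genuine obstacle.
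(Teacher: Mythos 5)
Your proposal is correct and follows essentially the same route as the paper: both reduce, via the remark following the Star--Comb Lemma, to showing that $R_1\approx_{G^k}R_2$ implies $R_1\approx_GR_2$ for rays of the connected spanning subgraph $G$, and both do so by enlarging the finite set $S$ to a bounded-radius neighborhood $S'$ and observing that any edge of $G^k$ with both ends outside $S'$ is witnessed by a $G$-path of length at most $k$ that is too short to meet $S$, hence cannot join distinct components of $G-S$. The only cosmetic differences are that you argue the contrapositive and use the closed $(k-1)$-ball around $S$ where the paper uses $S\cup N_{G^k}(S)$.
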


\begin{proof}
Clearly $G\unlhd G^k$. Suppose that $R_1,R_2$ are two rays of $G$
with $R_1\approx_{G^k}R_2$. Let $S\subseteq V(G)$ be an arbitrary
finite set, and set $S'=S\cup N_{G^k}(S)$. Clearly $S'$ is finite,
and thus there is a component $C'$ of $G^k-S'$ that contains tails
of both $R_1$ and $R_2$. For any two vertices $u,v\in V(G)\backslash
S'$ with $uv\in E(G^k)$, $G$ has a path $P$ connecting $u,v$ of
length at most $k$. Since both $u,v$ have distance more than $k$
from $S$, $V(P)\cap S=\emptyset$. It follows that $u,v$ are
connected in $G-S$. This implies that all vertices in $V(C')$ are
contained in a common component $C$ of $G-S$. Thus $C$ contains
tails of both $R_1$ and $R_2$, implying that $G\unlhd^FG^k$.
\end{proof}

\begin{lemma}\label{LeFaithfulSubgraph3}
Let $G$ be an infinite locally finite connected graph, let
$\{\mathcal{D}_i\}_{i=1}^{\infty}$ be a sequence of finite sets of
finite connected subgraphs of $G$. Set
$\mathcal{D}=\bigcup_{i=1}^{\infty}\mathcal{D}_i$
and $D=\bigcup\mathcal{D}$. Suppose that\\
(1) $V(G)=\bigcup_{D\in\mathcal{D}}V(D)$ (i.e., $D\unlhd G$);\\
(2) if $j\geq i+2$, then $\bigcup\mathcal{D}_i$ and
$\bigcup\mathcal{D}_j$ are disjoint;\\
(3) $\bigcup\mathcal{D}_1$ is connected, and for every
$D_{j+1}\in\mathcal{D}_{j+1}$, there is a unique
$D_j\in\mathcal{D}_j$ such that $V(D_{j+1})\cap V(D_j)\neq\emptyset$;\\
(4) for every component $H$ of
$G-\bigcup(\bigcup_{i=1}^j\mathcal{D}_i)$, there is a unique
$D_{j+1}\in\mathcal{D}_{j+1}$ with $V(D_{j+1})\cap
V(H)\neq\emptyset$.\\
Then $D\unlhd^FG$.
\end{lemma}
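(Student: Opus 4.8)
The plan is to apply Lemma~\ref{LeFaithfulSubgraph2} to the exhaustion $F_j:=\bigcup\big(\bigcup_{k=1}^{j}\mathcal{D}_k\big)$. Each $F_j$ is a finite subgraph, $F_j\leq F_{j+1}$, and $\bigcup_{j}F_j=D$, which is spanning by~(1). First I would check that every $F_j$ is connected: $\bigcup\mathcal{D}_1$ is connected by~(3), and inductively each $D_{j+1}\in\mathcal{D}_{j+1}$ is connected and shares a vertex with a (unique) member of $\mathcal{D}_j\subseteq F_j$ by~(3), so adjoining the finitely many blocks of $\mathcal{D}_{j+1}$ to the connected $F_j$ preserves connectivity. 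Thus $(F_j)$ is an increasing sequence of finite connected subgraphs with union $D\unlhd G$, and by Lemma~\ref{LeFaithfulSubgraph2} it would suffice to show: for every component $H$ of $G-F_{j+1}$, the set $N_D(H)$ is contained in one component of $F_{j+1}-F_j$.

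To control $N_D(H)$, fix such an $H$ and let $H'$ be the component of $G-F_j$ with $H\subseteq H'$. If $w\in N_D(H)$, pick $h\in V(H)$ with $wh\in E(D)$; then $w,h$ lie in a common block of some $\mathcal{D}_m$. Since $h\notin V(F_{j+1})$ its only layers are $\geq j+2$, forcing $m\geq j+2$, while $w\in V(F_{j+1})$; hypothesis~(2) then pins this to $m=j+2$ and gives $w\in V(\bigcup\mathcal{D}_{j+1})\setminus V(F_j)$. Applying~(4) at two levels identifies the blocks: that common block is the unique $D^{\dagger}\in\mathcal{D}_{j+2}$ meeting $H$, and since $w\notin V(F_j)$ is adjacent to $h\in H'$ we have $w\in V(H')$, so $w$ lies in the unique $D^{*}\in\mathcal{D}_{j+1}$ meeting $H'$. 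Hence $N_D(H)\subseteq V(D^{*})\cap V(D^{\dagger})\subseteq V(F_{j+1})\setminus V(F_j)$.

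I expect the last step --- forcing $N_D(H)$ into a \emph{single} component of $F_{j+1}-F_j$ --- to be the main obstacle: although $N_D(H)\subseteq V(D^{*})$ with $D^{*}$ connected, deleting $V(F_j)$ may disconnect $D^{*}$, and the natural connector $D^{\dagger}$ sits in layer $j+2$, one layer above the annulus $F_{j+1}-F_j$. Rather than fight this, I would instead prove $D\unlhd^FG$ directly. Suppose rays $R_1,R_2$ of $D$ satisfy $R_1\approx_G R_2$ but $R_1\not\approx_D R_2$, so some finite $S$ separates their tails in $D$; choose $i$ with $S\subseteq V(F_i)$, and for $j\geq i$ let $H_j\supseteq H_{j+1}$ be the component of $G-F_j$ carrying tails of both rays. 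By~(4) there is a unique block $E_l\in\mathcal{D}_{l+1}$ meeting $H_l$; for $l\geq i+1$ its layer index $l+1\geq i+2$, so~(2) makes $E_l$ disjoint from $F_i$, whence (being connected and meeting $H_l\subseteq H_i$) $E_l\subseteq H_i$. Moreover~(3) and the uniqueness in~(4) force $E_l$ and $E_{l+1}$ to share a vertex, so $\Gamma:=\bigcup_{l\geq i+1}E_l$ is a connected subgraph of $D$ lying in $H_i$, hence avoiding $S$. Finally, to descend from $H_l$ into $H_{l+1}$ each ray must cross $V(F_{l+1})\cap V(H_l)=V(E_l)\cap V(H_l)$, so both $R_1$ and $R_2$ meet $\Gamma$ arbitrarily deep; splicing a tail of $R_1$, a path in the connected $\Gamma$, and a tail of $R_2$ produces an $R_1$--$R_2$ join inside $D-S$, contradicting $R_1\not\approx_D R_2$. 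This gives faithfulness, and with~(1) the conclusion $D\unlhd^FG$. (Alternatively, the same chain $\Gamma$ verifies the neighbourhood hypothesis along a suitable subsequence of $(F_j)$, where the two-layer annulus now contains the connector, letting one finish through Lemmas~\ref{LeFaithfulSubgraph1} and~\ref{LeFaithfulSubgraph2}.)
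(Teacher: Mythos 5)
Your proposal is correct, but it finishes by a genuinely different route than the paper. You correctly diagnose the real obstacle in the naive plan: with the straight exhaustion $F_j=\bigcup\bigl(\bigcup_{k=1}^{j}\mathcal{D}_k\bigr)$, the block $D^{*}\in\mathcal{D}_{j+1}$ containing $N_D(H)$ must meet $\bigcup\mathcal{D}_j\subseteq F_j$ by (3), so $D^{*}-F_j$ may be disconnected and the hypothesis of Lemma~\ref{LeFaithfulSubgraph2} can genuinely fail. The paper's fix is exactly the alternative you mention in your final parenthesis: it re-indexes with stride two, setting $F_j=\bigcup\bigl(\bigcup_{i=1}^{2j-1}\mathcal{D}_i\bigr)$, so that each annulus $F_j-F_{j-1}$ contains the two consecutive layers $\mathcal{D}_{2j-2}$ and $\mathcal{D}_{2j-1}$; for a component $H$ of $G-F_j$, the unique $D_{2j}\in\mathcal{D}_{2j}$ meeting $H$ (from (4)) and then the unique $D_{2j-1}\in\mathcal{D}_{2j-1}$ meeting $D_{2j}$ (from (3)) satisfy $N_D(H)\subseteq V(D_{2j-1})$, and by (2) the connected $D_{2j-1}$ is disjoint from $F_{j-1}$, hence lies inside the annulus; Lemma~\ref{LeFaithfulSubgraph2} then finishes in a few lines. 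Your replacement --- the chain $\Gamma=\bigcup_{l\geq i+1}E_l$ of connector blocks along the nested components $H_l$, with (3) plus the uniqueness in (4) forcing consecutive $E_l$'s to intersect, and the layer-disjointness from (2) forcing every $D$-edge entering $H_{l+1}$ to start in $V(E_l)\cap V(H_l)$ --- is sound, and splicing tails of $R_1,R_2$ through the connected $\Gamma\subseteq H_i$ (which avoids $S$) does contradict the assumed separation; in effect you re-prove Lemmas~\ref{LeFaithfulSubgraph1} and~\ref{LeFaithfulSubgraph2} inline for this particular decomposition. What each approach buys: the paper's stride-two trick is shorter and reuses its lemma machinery, while yours is self-contained and makes the role of the connector blocks explicit, at the cost of length. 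One detail you should state explicitly: your direct argument establishes only the implication $R_1\approx_GR_2\Rightarrow R_1\approx_DR_2$, which suffices for faithfulness only because $D$ is a connected spanning subgraph (the remark following Lemma~\ref{LeComb}); connectivity of $D$ follows from the connectivity of your $F_j$'s established in your first paragraph, so this is harmless but worth a sentence.
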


\begin{proof}
We define a sequence of finite subgraphs
$\mathcal{F}=\{F_1,F_2,\ldots\}$ such that
$F_j=\bigcup(\bigcup_{i=1}^{2j-1}\mathcal{D}_i)$. By conditions we
see that $F_j$ is connected, $F_j\leq F_{j+1}$ and
$\bigcup\mathcal{F}=D$.

Let $H$ be a component of $G-F_j$. We will show that $N_D(H)$ is
connected in $F_j-F_{j-1}$ (or $F_1$ if $j=1$). Let $D_{2j}$ be the
unique graph in $\mathcal{D}_{2j}$ with $V(D_{2j})\cap
V(H)\neq\emptyset$, and let $D_{2j-1}$ be the unique graph in
$\mathcal{D}_{2j-1}$ with $V(D_{2j})\cap V(D_{2j-1}\neq\emptyset$.
It follows that $N_D(H)\subseteq V(D_{2j-1})$. Since
$F_{j-1}=\bigcup(\bigcup_{i=1}^{2j-3}\mathcal{D}_i)$,
$V(D_{2j-1})\cap V(F_{i-1})=\emptyset$, i.e., $D_{2j-1}\leq
F_j-F_{j-1}$. Since $D_{2j-1}$ is connected, we see that $N_D(H)$ is
connected in $F_j-F_{j-1}$. By Lemma \ref{LeFaithfulSubgraph2},
$D\unlhd^FG$.
\end{proof}

\begin{theorem}
If $G$ an infinite locally finite connected graph, then $G^2\square
K_2$ has a Hamiltonian circle.
\end{theorem}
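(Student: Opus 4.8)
The plan is to mirror the strategy used for the cubic-prism theorem (Theorem \ref{ThCubic}), replacing the ``even cactus'' intermediary with the ``even semi-cactus'' introduced in this section. The goal is to produce a faithful spanning even semi-cactus inside $G^2$, so that after taking the prism with $K_2$ we can invoke Lemma \ref{LeSemiCactusHamiltonDegree} together with the main characterization Theorem \ref{ThMain}.

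First I would reduce to building a suitable spanning subgraph of $G^2$. By Lemma \ref{LeFaithfulPower} we have $G\unlhd^FG^2$, so by the composition rule Lemma \ref{LeFaithfulFaithful} it suffices to find a spanning subgraph $F$ of $G^2$ that is faithful to $G^2$ and is an even semi-cactus; then $F\unlhd^FG^2$. The construction of $F$ is where the real work lies. I would take a normal spanning tree $T$ of $G$, which exists by Theorem \ref{ThJu} and is faithful by Lemma \ref{LeNormalFaithful}. The key geometric fact is that in the square $G^2$, a normal spanning tree can be turned into an even semi-cactus: one processes $T$ level by level, and at each vertex one uses the extra adjacencies available in $G^2$ (edges joining a vertex to its grandparent-type neighbours and to siblings at distance $2$) to route the tree structure into even cycles and $K_2$ blocks, keeping the maximum degree at $4$ and every vertex in at most two blocks. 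The even-ness of the cycles is arranged by doubling back appropriately, exactly as one builds a Hamiltonian cycle in the square of a tree in the finite Fleischner-type arguments.

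To package the faithfulness, I would organize the construction so that it fits the hypotheses of Lemma \ref{LeFaithfulSubgraph3}: partition the vertices by their $T$-levels into finite batches $\mathcal{D}_i$ of finite connected subgraphs, arranged so that batches of index differing by at least $2$ are disjoint, each later batch meets a unique earlier one, and each component of the complement of the first $j$ batches attaches to a unique batch $\mathcal{D}_{j+1}$. These are precisely conditions $(1)$--$(4)$ of Lemma \ref{LeFaithfulSubgraph3}, and since $T$ is normal the separation properties of normal trees make the ``unique predecessor'' and ``component attaches to a unique piece'' requirements automatic. Applying Lemma \ref{LeFaithfulSubgraph3} then yields $D\unlhd^FG^2$ where $D=\bigcup\mathcal{D}$ is the even semi-cactus $F$. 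Finally, by Lemma \ref{LePrismFaithful} we get $F\square K_2\unlhd^FG^2\square K_2$, by Lemma \ref{LeSemiCactusHamiltonDegree} the graph $F\square K_2$ has a Hamiltonian curve with all end-degrees equal to $2$, and Theorem \ref{ThMain} delivers a Hamiltonian circle of $G^2\square K_2$.

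The hard part will be the explicit even-semi-cactus construction in $G^2$ and verifying that it simultaneously (a) spans $V(G)$, (b) respects the degree-$\le 4$ and at-most-two-blocks constraints of a semi-cactus, (c) makes every cycle even, and (d) is packaged into batches satisfying the rather rigid conditions $(2)$--$(4)$ of Lemma \ref{LeFaithfulSubgraph3}. The normality of $T$ is the crucial lever: it guarantees that edges of $G$ only join comparable vertices, so that each vertex together with its relevant $T$-neighbours can be incorporated into a local even-cycle gadget without creating long-range interference, and it confines the neighbourhoods of far-away components to a single earlier batch. I expect the level-by-level routing—ensuring cycles close up evenly while the batch-disjointness condition (index gap $\ge 2$) is maintained—to be the most delicate bookkeeping in the argument.
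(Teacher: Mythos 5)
Your high-level plan coincides with the paper's: a normal spanning tree $T$ (Theorem \ref{ThJu}, Lemma \ref{LeNormalFaithful}), a spanning even semi-cactus built from distance-two adjacencies of $T$, faithfulness via Lemma \ref{LeFaithfulSubgraph3}, then Lemma \ref{LePrismFaithful}, Lemma \ref{LeSemiCactusHamiltonDegree} and Theorem \ref{ThMain}. However, the two steps you treat as routine are exactly where the content lies, and one of them, as you state it, is incorrect.

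First, you invoke Lemma \ref{LeFaithfulSubgraph3} with ambient graph $G^2$, asserting that normality of $T$ makes conditions (3) and (4) ``automatic''. Normality restricts the edges of $G$ (they join comparable vertices), but it says nothing about edges of $G^2$ that join incomparable vertices through a common neighbour, and such edges can destroy condition (4). Concretely, let $G$ consist of two rays $ra_1a_2\ldots$ and $rb_1b_2\ldots$ glued at the root $r$, together with the chords $ra_5$ and $rb_5$; deleting the two chords gives a normal spanning tree, yet $a_5b_5\in E(G^2)$ (common neighbour $r$). Once $r$ lies in the union of the first batches, the two subtrees below merge into a single component of $G^2$ minus that union, and this component meets two distinct gadgets of the next batch, so the uniqueness required in condition (4) fails. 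This is why the paper applies Lemma \ref{LeFaithfulSubgraph3} inside $T^2$, where the tree structure does give (3) and (4), obtaining $D\unlhd^F T^2$, and only then transfers to $G^2$ through the chain $T\unlhd^F T^2$ and $G\unlhd^F G^2$ (Lemma \ref{LeFaithfulPower}), $T\unlhd^F G$, and Lemmas \ref{LeFaithfulSpanning} and \ref{LeFaithfulFaithful}, which yield $T^2\unlhd^F G^2$ and hence $D\unlhd^F G^2$. Your argument is missing this transfer and cannot do without it.

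Second, the construction you defer to ``exactly as one builds a Hamiltonian cycle in the square of a tree in the finite Fleischner-type arguments'' does not exist in that form: the square of a finite tree is Hamiltonian only when the tree is a caterpillar, and in any case what is needed here is not a Hamiltonian cycle but a spanning \emph{even} semi-cactus of $T^2$. Evenness is the real obstruction: the natural cycle through a vertex $u$ and its sons $v_1,\ldots,v_d$ has length $d+1$, which is odd precisely when $d$ is even. The paper repairs parity by the case analysis (a)--(g) in its proof: when $d$ is even it either hangs $u$ or a son off as a pendant $K_2$, or absorbs the sons of the first son $\theta(u)$ into the gadget, and the choice among these is forced by the requirement that any vertex lying in two gadgets be a d-vertex of both (otherwise the union fails to be a semi-cactus, since a vertex may lie in at most two blocks). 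This parity-and-cut-vertex bookkeeping, which also dictates the batch structure fed into Lemma \ref{LeFaithfulSubgraph3}, is the core of the proof and is absent from your sketch.
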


\begin{proof}
By Theorem \ref{ThJu} and Lemma \ref{LeNormalFaithful}, $G$ has a
faithful spanning tree $T$. For every $u\in V(G)$, let $N^+(u)$ be
the set of sons of $u$ in $T$, and $d^+(u)=|N^+(u)|$ (so
$d^+(u)=d_T(u)$ if $u$ is the root, and $d^+(u)=d_T(u)-1$
otherwise). We will assign an order on $N^+(u)$, and use $\theta(u)$
to denote the first son of $u$.

For every $u$ with $d^+(u)\neq 0$, we define a finite subtree $T_u$
of $T$ rooted at $u$, and a spanning even semi-cactus $D_u$ of
$T_u^2$ as follows. Set $N^+(u)=\{v_1,v_2,\ldots,v_d\}$ and
$N^+(v_1)=\{w_1,w_2,\ldots,w_{d_1}\}$, where $v_1=\theta(u)$,
$d=d^+(u)$ and $d_1=d^+(v_1)$.\\
(a) If $d=1$, then let $T_u=T[\{u,v_1\}]$ and $D_u=T_u$.\\
(b) If $d\geq 3$ is odd, then let $T_u=T[\{u\}\cup N^+(u)]$, and
$D_u$ be the cycle $uv_1v_2\ldots v_du$.\\
(c) If $d=2$ and $d_1=0$, then let $T_u=T[\{u\}\cup N^+(u)]$,
and $D_u$ be the path $uv_1v_2$.\\
(d) If $d\geq 4$ is even and $d_1=0$, then let $T_u=T[\{u\}\cup
N^+(u)]$, and $D_u$ consist of the edge $uv_1$ and the cycle
$v_1v_2\ldots v_dv_1$.\\
(e) If $d\geq 2$ is even and $d_1\geq 1$ is odd, then let
$T_u=T[\{u\}\cup N^+(u)\cup N^+(v_1)]$ and $D_u$ be the cycle
$uw_1w_2\ldots w_{d_1}v_1v_2\ldots v_du$.\\
(f) If $d=2$ and $d_1\geq 2$ is even, then let $T_u=T[\{u\}\cup
N^+(u)\cup N^+(v_1)]$ and $D_u$ consist of the cycle $uw_1w_2\ldots
w_{d_1}v_1u$ and the edge $v_1v_2$.\\
(g) If $d\geq 4$ is even and $d_1\geq 2$ is even, then let
$T_u=T[\{u\}\cup N^+(u)\cup N^+(v_1)]$ and $D_u$ consist of the two
cycles $uw_1w_2\ldots w_{d_1}v_1u$ and $v_1v_2\ldots v_dv_1$.

Note that all vertices but $\theta(u)$ are d-vertices of $D_u$, and
if $\theta(u)$ is a c-vertex of $D_u$, then $N^+(\theta(u))\subseteq
V(T_u)$. Let $\mathcal{D}_1=\{D_{u_1}\}$, where $u_1$ is the root of
$T$, and for $i=1,2,\ldots$, let
$$\mathcal{D}_{i+1}=\{D_v: d^+(v)\neq 0\mbox{ and }v\mbox{ is a leaf of }T_u\mbox{ with
}D_u\in\mathcal{D}_i\}.$$ Set
$\mathcal{D}=\bigcup_{i=1}^{\infty}\mathcal{D}_i$ and
$D=\bigcup\mathcal{D}$. Note that every vertex is contained in at
most two graphs in $\mathcal{D}$, and if a vertex $v$ is contained
in two graphs in $\mathcal{D}$, then $v$ is a d-vertex of them. We
can see that $D$ is a spanning even semi-cactus of $T^2$.

Note that for every $D_v\in\mathcal{D}_{j+1}$, there is a unique
$D_u\in\mathcal{D}_j$ with $V(D_v)\cap V(D_u)\neq\emptyset$, and for
every component $H$ of $T^2-\bigcup(\bigcup_{i=1}^j\mathcal{D}_i)$,
there is a unique $D_v\in\mathcal{D}_{j+1}$ with $V(D_v)\cap
V(H)\neq\emptyset$. By Lemma \ref{LeFaithfulSubgraph3},
$D\unlhd^FT^2$.

By Lemma \ref{LeFaithfulPower}, $T\unlhd^FT^2$ and $G\unlhd^FG^2$.
Since $T\unlhd^FG$, by Lemma \ref{LeFaithfulSpanning},
$T\unlhd^FG^2$, and by Lemma \ref{LeFaithfulFaithful},
$T^2\unlhd^FG^2$. Again by Lemma \ref{LeFaithfulFaithful},
$D\unlhd^FG^2$. By Lemma \ref{LePrismFaithful}, $D\square
K_2\unlhd^FG^2\square K_2$. By Lemmas \ref{LeSemiHamilton},
\ref{LeSemiCactusHamiltonDegree} and Theorem \ref{ThMain},
$G^2\square K_2$ has a Hamiltonian circle.
\end{proof}

\section{Prisms of line graphs}

In this section we extend Theorem \ref{ThCaKaRoRy2} to infinite
graphs. Let $G$ be an infinite locally finite graph and
$\mathcal{G}=L(G)$ be the line graph of $G$ (i.e.,
$V(\mathcal{G})=E(G)$ and two edges of $G$ are adjacent in
$\mathcal{G}$ if and only if they are adjacent in $G$). Let $R$ be a
ray of $G$, we set $\lambda(R)=L(R)$. So $\lambda(R)$ is an induced
ray of $\mathcal{G}$. Now let $\mathcal{R}=e_1e_2\ldots$ be a ray of
$\mathcal{G}$, we define a ray $\lambda'(\mathcal{R})$ as follows:
First let $\mathcal{R}'=e'_1e'_2\ldots$ be an induced ray of
$\mathcal{G}$ such that $e'_1=e_1$, and for $i=1,2,\ldots$, let
$e_j$ be the last vertex along $\mathcal{R}$ with $e'_ie_j\in
E(\mathcal{G})$, and let $e'_{i+1}=e_{j+1}$. Now let
$R=\lambda'(\mathcal{R})$ be the ray of $G$ such that
$\mathcal{R}'=\lambda(R)$.

By definitions we can see that $\lambda'(\lambda(R))=R$ for every
ray $R$ of $G$, and
$\lambda(\lambda'(\mathcal{R}))\approx_{\mathcal{G}}\mathcal{R}$ for
every ray $\mathcal{R}$ of $\mathcal{G}$.

\begin{lemma}\label{LeMapLineGraph}
Let $G$ be an infinite locally finite connected graph and $\mathcal{G}=L(G)$.\\
(1) If $R_1,R_2$ are two rays of $G$, then $R_1\approx_GR_2$ if and
only if $\lambda(R_1)\approx_\mathcal{G}\lambda(R_2)$.\\
(2) If $\mathcal{R}_1,\mathcal{R}_2$ are two rays of $\mathcal{G}$,
then $\mathcal{R}_1\approx_{\mathcal{G}}\mathcal{R}_2$ if and only
if $\lambda'(\mathcal{R}_1)\approx_G\lambda'(\mathcal{R}_2)$.
\end{lemma}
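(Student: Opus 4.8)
The plan is to prove both equivalences by working with the two maps $\lambda$ and $\lambda'$ relating rays of $G$ to rays of $\mathcal{G}=L(G)$, exploiting the relations $\lambda'(\lambda(R))=R$ and $\lambda(\lambda'(\mathcal{R}))\approx_{\mathcal{G}}\mathcal{R}$ established just before the statement. The central observation I would make precise is the following: a finite vertex set $S\subseteq V(G)$ (i.e. a finite set of edges of $G$) ``separates'' rays of $\mathcal{G}$ in much the same way that a finite vertex set of $G$ separates rays of $G$, and the map $\lambda$ is essentially ``local'' — whether two tails lie in the same component of $G-S$ can be read off from whether the corresponding tails of $\lambda(R_1),\lambda(R_2)$ lie in the same component of $\mathcal{G}-S'$ for an appropriate finite $S'\subseteq V(\mathcal{G})=E(G)$, and conversely. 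So the whole proof reduces to a careful translation between finite separators on the two sides.

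\textbf{Proof of (1).} For the forward direction, suppose $R_1\approx_G R_2$. Given any finite $S'\subseteq E(G)=V(\mathcal{G})$, I would let $S\subseteq V(G)$ be the (finite) set of all endpoints of edges in $S'$, so that $S'\subseteq E(G[S])$ up to incidence. Since $R_1\approx_G R_2$, some component $C$ of $G-S$ contains tails of both; then every edge of these tails avoids $S'$ and the corresponding tails of $\lambda(R_1),\lambda(R_2)$ live in a single component of $\mathcal{G}-S'$ (two consecutive edges of a walk in $C$ are adjacent in $\mathcal{G}$ and neither lies in $S'$). Hence $\lambda(R_1)\approx_{\mathcal{G}}\lambda(R_2)$. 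For the converse, given finite $S\subseteq V(G)$, set $S'$ to be the finite set of all edges of $G$ incident with $S$; a component of $\mathcal{G}-S'$ containing tails of $\lambda(R_1),\lambda(R_2)$ consists of edges disjoint from $S$, which therefore all lie in one component of $G-S$, carrying tails of $R_1,R_2$ with it. Thus $R_1\approx_G R_2$.

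\textbf{Proof of (2).} Here I would not argue from scratch but reduce to (1) using the two identities. Write $R_i=\lambda'(\mathcal{R}_i)$. Since $\lambda(\lambda'(\mathcal{R}_i))\approx_{\mathcal{G}}\mathcal{R}_i$, the equivalence $\mathcal{R}_1\approx_{\mathcal{G}}\mathcal{R}_2$ holds iff $\lambda(R_1)\approx_{\mathcal{G}}\lambda(R_2)$ (equivalence of rays is transitive). By part (1) the latter holds iff $R_1\approx_G R_2$, i.e. iff $\lambda'(\mathcal{R}_1)\approx_G\lambda'(\mathcal{R}_2)$, which is exactly the claim. This makes (2) essentially a corollary of (1) together with the already-established properties of $\lambda,\lambda'$.

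\textbf{The main obstacle} I anticipate is getting the finite-separator translation in part (1) exactly right at the level of incidence bookkeeping: an edge $e=xy$ of $G$ is a \emph{vertex} of $\mathcal{G}$, and ``$e$ lies in a component $C$ of $G-S$'' must be interpreted as ``both endpoints of $e$ lie in $C$,'' so I must be careful that enlarging $S$ to $S'$ (all incident edges) and shrinking $S'$ to $S$ (all endpoints) genuinely produce separators on the other side rather than merely plausible-looking ones. The key technical point is that adjacency in $\mathcal{G}$ corresponds to sharing a vertex in $G$, so a path in $\mathcal{G}$ avoiding $S'$ traces out a walk in $G$ all of whose vertices avoid $S$, and vice versa; once this correspondence between $S$-avoiding walks and $S'$-avoiding walks is stated cleanly, both directions of (1) follow, and (2) is immediate.
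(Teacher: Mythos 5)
Your proposal is correct and follows essentially the same route as the paper: the forward direction of (1) passes from a finite $\mathcal{S}\subseteq V(\mathcal{G})$ to the finite set $S$ of endpoints of those edges and uses connectedness of the line graph of a component of $G-S$; the converse passes from a finite $S\subseteq V(G)$ to the finite set of edges incident with $S$ (the paper phrases this as a contrapositive, which is an immaterial difference); and (2) is reduced to (1) via the identities $\lambda'(\lambda(R))=R$ and $\lambda(\lambda'(\mathcal{R}))\approx_{\mathcal{G}}\mathcal{R}$, exactly as in the paper. No gaps.
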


\begin{proof}
(1) Suppose first that $R_1\approx_GR_2$. Let $\mathcal{S}\subseteq
V(\mathcal{G})$ be arbitrary finite, and let $S$ be the set of
vertices of $G$ that incident to some edges in $\mathcal{S}$. Then
$S$ is finite. It follows that there is a component $H$ of $G-S$
that contains tails of $R_1,R_2$. Let $R'_1,R'_2$ be tails of
$R_1,R_2$ contained in $H$. Clearly $L(H)$ is connected in
$\mathcal{G}-\mathcal{S}$. Let $\mathcal{H}$ be the component of
$\mathcal{G}-\mathcal{S}$ containing $L(H)$. It follows that
$\mathcal{H}$ contains both $\lambda(R'_1),\lambda(R'_2)$, which are
tails of $\lambda(R_1),\lambda(R_2)$, respectively. Therefore
$\lambda(R_1)\approx_\mathcal{G}\lambda(R_2)$.

Suppose now that $R_1\not\approx_GR_2$. Let $S\subseteq V(G)$ be
finite such that $R_1,R_2$ has tails in distinct components of
$G-S$. Let $H_i$ be the component of $G-S$ containing a tail of
$R_i$, $i=1,2$. Let $\mathcal{S}$ be the set of edges that incident
to some vertices in $S$. Then $\mathcal{S}$ is finite. Clearly
$L(H_1)$ and $L(H_2)$ are not connected in
$\mathcal{G}-\mathcal{S}$. It follows that
$\lambda(R_1),\lambda(R_2)$ has tails in distinct components of
$\mathcal{G}-\mathcal{S}$. Therefore
$\lambda(R_1)\not\approx_\mathcal{G}\lambda(R_2)$.

(2) The assertion can be deduced by (1) and the fact that
$\lambda'(\lambda(R))=R$ for every ray $R$ of $G$, and
$\lambda(\lambda'(\mathcal{R}))\approx_{\mathcal{G}}\mathcal{R}$ for
every ray $\mathcal{R}$ of $\mathcal{G}$.
\end{proof}

When concerning more than one graph, we use a subscript to denote
the associated graph of the maps $\lambda,\lambda'$ (i.e.,
$\lambda_G$/$\lambda'_G$ for the maps between the sets of rays of
$G$ and $L(G)$, and $\lambda_F$/$\lambda'_F$ for that of $F$ and
$L(F)$).

\begin{lemma}\label{LeFaithfulLineGraph}
If $F\leq^FG$, then $L(F)\leq^FL(G)$.
\end{lemma}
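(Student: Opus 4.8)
The plan is to show that $L(F)$ is a faithful subgraph of $L(G)$ by verifying the two defining conditions of faithfulness, using the ray-correspondence maps $\lambda$ and $\lambda'$ together with Lemma \ref{LeMapLineGraph}. First I would observe that since $F \leq G$, we have $L(F) \leq L(G)$, because every edge of $F$ is an edge of $G$ and adjacency of edges is preserved. This gives the subgraph relation; it remains to check the two faithfulness conditions.

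For condition (1), that every end of $L(G)$ contains a ray of $L(F)$, I would start with an arbitrary end $\mathcal{A}$ of $L(G)$ and a ray $\mathcal{R}$ in it. Apply $\lambda'_G$ to obtain a ray $R = \lambda'_G(\mathcal{R})$ of $G$, which satisfies $\lambda_G(R) = \mathcal{R}' \approx_{L(G)} \mathcal{R}$, so $\lambda_G(R)$ lies in $\mathcal{A}$. Since $F \leq^F G$, the end of $G$ containing $R$ contains a ray $R^F$ of $F$ with $R^F \approx_G R$. By Lemma \ref{LeMapLineGraph}(1), $\lambda_F(R^F) = \lambda_G(R^F) \approx_{L(G)} \lambda_G(R) \approx_{L(G)} \mathcal{R}$; here one should note that $\lambda$ computed in $F$ and in $G$ agree on a ray of $F$, since the line-graph edge-structure of $F$ is inherited from $G$. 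Thus $\lambda_F(R^F)$ is a ray of $L(F)$ lying in $\mathcal{A}$, establishing (1).

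For condition (2), the relevant implication (by the remark after the definition of faithful) is that for two rays $\mathcal{R}_1, \mathcal{R}_2$ of $L(F)$, $\mathcal{R}_1 \approx_{L(G)} \mathcal{R}_2$ implies $\mathcal{R}_1 \approx_{L(F)} \mathcal{R}_2$. I would set $R_i = \lambda'_F(\mathcal{R}_i)$, rays of $F$. By Lemma \ref{LeMapLineGraph}(2) applied in $G$ (noting $\mathcal{R}_i$ are also rays of $L(G)$ with $\lambda'_G(\mathcal{R}_i) = \lambda'_F(\mathcal{R}_i) = R_i$), the hypothesis $\mathcal{R}_1 \approx_{L(G)} \mathcal{R}_2$ yields $R_1 \approx_G R_2$. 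Since $F \leq^F G$, faithfulness gives $R_1 \approx_F R_2$. Applying Lemma \ref{LeMapLineGraph}(1) in $F$, we get $\lambda_F(R_1) \approx_{L(F)} \lambda_F(R_2)$, and since $\lambda_F(\lambda'_F(\mathcal{R}_i)) = \lambda_F(R_i) \approx_{L(F)} \mathcal{R}_i$, we conclude $\mathcal{R}_1 \approx_{L(F)} \mathcal{R}_2$, as required.

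The main obstacle I anticipate is bookkeeping the distinction between the maps $\lambda_F, \lambda'_F$ computed inside $L(F)$ and $\lambda_G, \lambda'_G$ computed inside $L(G)$, and checking that they are compatible on rays of $F$. The key point is that because $F \leq G$, adjacency of two edges of $F$ is the same whether tested in $L(F)$ or in $L(G)$, so $\lambda_F(R) = \lambda_G(R)$ for any ray $R$ of $F$, and likewise $\lambda'$ restricted to rays of $L(F)$ agrees in both graphs. Once this compatibility is made explicit, each of the two conditions follows by a clean transport of the corresponding property through Lemma \ref{LeMapLineGraph} and the faithfulness of $F$ in $G$; no further combinatorial work is needed.
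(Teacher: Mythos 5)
Your proposal is correct and follows essentially the same route as the paper's proof: both rely on the observation that $L(F)$ is an induced subgraph of $L(G)$ so that $\lambda_F,\lambda'_F$ agree with $\lambda_G,\lambda'_G$ on rays of $F$ and $L(F)$, then verify condition (1) by pulling a ray of $L(G)$ back via $\lambda'_G$, invoking faithfulness of $F$ in $G$, and pushing forward with $\lambda$, and condition (2) by transporting equivalence through Lemma \ref{LeMapLineGraph}(2), faithfulness of $F$, Lemma \ref{LeMapLineGraph}(1), and the relation $\lambda(\lambda'(\mathcal{R}))\approx\mathcal{R}$. No gaps; this matches the paper's argument step for step.
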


\begin{proof}
Set $\mathcal{G}=L(G)$ and $\mathcal{F}=L(F)$. Clearly $\mathcal{F}$
is an induced subgraph of $\mathcal{G}$, $\lambda_G(R)=\lambda_F(R)$
for every ray $R$ of $F$, and
$\lambda'_G(\mathcal{R})=\lambda'_F(\mathcal{R})$ for every ray
$\mathcal{R}$ of $\mathcal{F}$.

Let $\mathcal{R}$ be a ray of $\mathcal{G}$ (so
$\lambda'_G(\mathcal{R})$ is a ray of $G$). Since $F\leq^FG$, $F$
has a ray $R$ with $R\approx_G\lambda'_G(\mathcal{R})$. Now
$\lambda_G(R)=\lambda_F(R)$ is a ray of $\mathcal{F}$, and by Lemma
\ref{LeMapLineGraph},
$\lambda_G(R)\approx_{\mathcal{G}}\lambda_G(\lambda'_G(\mathcal{R}))$.
Since
$\mathcal{R}\approx_{\mathcal{G}}\lambda_G(\lambda'_G(\mathcal{R}))$,
we have $\lambda_G(R)\approx_{\mathcal{G}}\mathcal{R}$.

Now let $\mathcal{R}_1,\mathcal{R}_2$ be two rays of $\mathcal{F}$
with $\mathcal{R}_1\approx_{\mathcal{G}}\mathcal{R}_2$. By Lemma
\ref{LeMapLineGraph},
$\lambda'_G(\mathcal{R}_1)\approx_G\lambda'_G(\mathcal{R}_2)$. Since
$F\leq^FG$, and
$\lambda'_G(\mathcal{R}_1)=\lambda'_F(\mathcal{R}_1),\lambda'_G(\mathcal{R}_2)=\lambda'_F(\mathcal{R}_2)$
are two rays of $F$ as well, we have
$\lambda'_F(\mathcal{R}_1)\approx_F\lambda'_F(\mathcal{R}_2)$. Again
by Lemma \ref{LeMapLineGraph},
$\lambda_F(\lambda'_F(\mathcal{R}_1))\approx_{\mathcal{F}}\lambda_F(\lambda'_F(\mathcal{R}_2))$.
Recall that
$\mathcal{R}_1\approx_{\mathcal{F}}\lambda_F(\lambda'_F(\mathcal{R}_1))$
and
$\mathcal{R}_2\approx_{\mathcal{F}}\lambda_F(\lambda'_F(\mathcal{R}_2))$.
We have $\mathcal{R}_1\approx_{\mathcal{F}}\mathcal{R}_2$. This
implies that $\mathcal{F}\leq^F\mathcal{G}$.
\end{proof}

Let $G$ be a graph, let $\mathcal{U}$ be a partition of $V(G)$ such
that each set $U\in\mathcal{U}$ is finite, and let
$\mathcal{G}=G/\mathcal{U}$ be the quotient graph. For every ray
$R=v_1v_2\ldots$ of $G$, we define a ray $\rho(R)=U_1U_2\ldots$ as
follows: Let $U_1$ be the set in $\mathcal{U}$ containing $v_1$. For
$i=1,2,\ldots$, let $v_j$ be the last vertex along $R$ that
contained in $U_i$, and let $U_{i+1}$ be the set in $\mathcal{U}$
containing $v_{j+1}$.

Now for the case that $G[U]$ is finite connected for every
$U\in\mathcal{U}$, we define a ray $\rho'(\mathcal{R})$ of $G$ for
every ray $\mathcal{R}$ of $\mathcal{G}$. We first assign every
$U\in\mathcal{U}$ with a spanning tree $T_U$ of $G[U]$. For every
set $U\in\mathcal{U}$, we choose one vertex in $U$ as its
representative; and for every edge $U_1U_2\in E(\mathcal{G})$, we
choose one edge in $E_G(U_1,U_2)$ as its representative. For a ray
$\mathcal{R}=U_1U_2\ldots$ of $\mathcal{G}$, let $u_1$ be the
representative of $U_1$, $v_iu_{i+1}$ be the representative of
$U_iU_{i+1}$, and $P_i$ be the unique path of $T_{U_i}$ between
$u_i$ and $v_i$. Then $\rho'(\mathcal{R})=u_1P_1v_1u_2P_2v_2\ldots$
is a ray of $G$.

By definitions we can see that $\rho'(\rho(R))\approx_GR$ for every
ray $R$ of $G$, and $\rho(\rho'(\mathcal{R}))=\mathcal{R}$ for every
ray $\mathcal{R}$ of $\mathcal{G}$. When concerning more than one
graph, we use a subscript to denote the associated graph of the maps
$\rho,\rho'$.

\begin{lemma}\label{LeMapQuotientGraph}
Let $G$ be a graph, let $\mathcal{U}$ be a partition of $V(G)$ such
that $G[U]$ is finite connected for each set $U\in\mathcal{U}$, and
let $\mathcal{G}=G/\mathcal{U}$.\\
(1) If $R_1,R_2$ are two rays of $G$, then $R_1\approx_GR_2$ if and
only if $\rho(R_1)\approx_\mathcal{G}\rho(R_2)$.\\
(2) If $\mathcal{R}_1,\mathcal{R}_2$ are two rays of $\mathcal{G}$,
then $\mathcal{R}_1\approx_{\mathcal{G}}\mathcal{R}_2$ if and only
if $\rho'(\mathcal{R}_1)\approx_G\rho'(\mathcal{R}_2)$.
\end{lemma}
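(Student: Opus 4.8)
The plan is to prove part (1) directly from the definition of $\approx$ via finite separators, closely following the structure of the proof of Lemma \ref{LeMapLineGraph}, and then to deduce part (2) formally from (1) together with the two composition identities $\rho'(\rho(R))\approx_G R$ and $\rho(\rho'(\mathcal{R}))=\mathcal{R}$ recorded just before the statement. The essential point throughout is the hypothesis that each $G[U]$ is \emph{finite and connected}: finiteness lets me pass between a finite vertex set of $\mathcal{G}$ and a finite vertex set of $G$, while connectedness lets me transport components back and forth across the quotient.

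For the forward implication of (1), suppose $R_1\approx_G R_2$ and let $\mathcal{S}\subseteq V(\mathcal{G})=\mathcal{U}$ be an arbitrary finite set. I would set $S=\bigcup_{U\in\mathcal{S}}U$, which is finite since $\mathcal{S}$ is finite and each $U$ is finite, and take the component $H$ of $G-S$ containing tails of both $R_1$ and $R_2$. The key observation is that the image $\mathcal{H}=\{U\in\mathcal{U}:U\cap V(H)\neq\emptyset\}$ is connected in $\mathcal{G}-\mathcal{S}$: any path of $H$ joining two vertices of $H$ projects to a walk in $\mathcal{G}$ through sets meeting $V(H)$, and no such set lies in $\mathcal{S}$ (a set in $\mathcal{S}$ is contained in $S$ and hence disjoint from $V(H)$). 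Since the sets visited by the tails of $\rho(R_1),\rho(R_2)$ all meet $V(H)$ and thus lie in $\mathcal{H}$, both tails end up in the single component of $\mathcal{G}-\mathcal{S}$ containing $\mathcal{H}$, giving $\rho(R_1)\approx_\mathcal{G}\rho(R_2)$.

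For the converse I argue contrapositively: if $R_1\not\approx_G R_2$, choose finite $S$ with tails of $R_1,R_2$ in distinct components $H_1,H_2$ of $G-S$, and put $\mathcal{S}=\{U\in\mathcal{U}:U\cap S\neq\emptyset\}$, which is finite since distinct sets of $\mathcal{U}$ are disjoint, so $|\mathcal{S}|\leq|S|$. Here I would use connectedness of $G[U]$ in the other direction: every vertex $U$ surviving in $\mathcal{G}-\mathcal{S}$ satisfies $U\subseteq V(G)\setminus S$, so $G[U]$ lies in a single component of $G-S$, and two adjacent surviving vertices lie in the same component; hence the assignment of each surviving $U$ to its component of $G-S$ is constant on components of $\mathcal{G}-\mathcal{S}$. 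Because the sets along $\rho(R_i)$ are pairwise distinct (each set occurs at most once, as $U_{k+1}$ always lies beyond the last visit to $U_k$), only finitely many of them lie in the finite set $\mathcal{S}$; so a tail of $\rho(R_i)$ consists of sets disjoint from $S$ but meeting $V(H_i)$, hence contained in $H_i$. Thus the tails of $\rho(R_1)$ and $\rho(R_2)$ land in components of $\mathcal{G}-\mathcal{S}$ assigned to $H_1$ and $H_2$ respectively, which are distinct, proving $\rho(R_1)\not\approx_\mathcal{G}\rho(R_2)$.

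Finally, part (2) follows immediately: applying (1) to the rays $R_i=\rho'(\mathcal{R}_i)$ of $G$ gives that $\rho'(\mathcal{R}_1)\approx_G\rho'(\mathcal{R}_2)$ holds if and only if $\rho(\rho'(\mathcal{R}_1))\approx_\mathcal{G}\rho(\rho'(\mathcal{R}_2))$, and since $\rho(\rho'(\mathcal{R}_i))=\mathcal{R}_i$ this is exactly $\mathcal{R}_1\approx_\mathcal{G}\mathcal{R}_2$. I expect the main obstacle to be the converse of (1): one must handle sets $U$ that straddle the separator $S$, and the argument that they cause no trouble rests precisely on the connectedness of $G[U]$ together with the fact that $\rho(R)$ visits each set only once, so that such straddling sets are finite in number and eventually disappear from the tail.
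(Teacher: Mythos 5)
Your proof is correct and follows essentially the same route as the paper's: the forward direction of (1) via $S=\bigcup\mathcal{S}$ and projecting the component $H$ to a connected vertex set of $\mathcal{G}-\mathcal{S}$, the converse contrapositively via $\mathcal{S}=\{U\in\mathcal{U}:U\cap S\neq\emptyset\}$ with connectedness of each $G[U]$ transporting components back to $G$, and (2) deduced formally from (1) together with $\rho(\rho'(\mathcal{R}))=\mathcal{R}$. If anything, you supply details the paper leaves implicit (the distinctness of the sets along $\rho(R_i)$, and the locally constant assignment of surviving sets to components of $G-S$), so the argument is, if anything, more complete than the published one.
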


\begin{proof}
(1) Suppose first that $R_1\approx_GR_2$. Let $\mathcal{S}\subseteq
V(\mathcal{G})$ be arbitrary finite, and let $S=\bigcup\mathcal{S}$.
Then $S\subseteq V(G)$ is finite. It follows that there is a
component $H$ of $G-S$ that contains tails of $R_1,R_2$. Recall that
each $U\in\mathcal{U}$ induces a finite connected subgraph of $G$.
The subgraph $\mathcal{H}$ of $\mathcal{G}$ induced by
$\{U\in\mathcal{U}: U\subseteq V(H)\}$ is connected in
$\mathcal{G}-\mathcal{S}$. Let $R'_1,R'_2$ be tails of $R_1,R_2$
contained in $H$. Since $\mathcal{H}$ contains all $U$ with $U\cap
V(R'_i)\neq\emptyset$, $\mathcal{H}$ contains a tail of $\rho(R_i)$,
$i=1,2$. It follows that $\rho(R_1)\approx_\mathcal{G}\rho(R_2)$.

Suppose now that $R_1\not\approx_GR_2$. Let $S\subseteq V(G)$ be
finite such that $R_1,R_2$ has tails in distinct components of
$G-S$. Let $H_i$ be the component of $G-S$ containing a tail of
$R_i$, $i=1,2$. Set $\mathcal{S}=\{U\in\mathcal{U}: U\cap
S\neq\emptyset\}$. So $\mathcal{S}$ is a finite subset of
$V(\mathcal{G})$. Clearly any two set $U_1\subseteq V(H_1)$ and
$U_2\subseteq V(H_2)$ are not connected in
$\mathcal{G}-\mathcal{S}$. It follows that $\rho(R_1),\rho(R_2)$ has
tails in distinct components of $\mathcal{G}-\mathcal{S}$. Therefore
$\lambda(R_1)\not\approx_\mathcal{G}\lambda(R_2)$.

(2) The assertion can be deduced by (1) and the fact that
$\rho'(\rho(R))\approx_GR$ for every ray $R$ of $G$, and
$\rho(\rho'(\mathcal{R}))=\mathcal{R}$ for every ray $\mathcal{R}$
of $\mathcal{G}$.
\end{proof}

\begin{lemma}\label{LeFaithfulQuotientGraph}
Let $G$ be a graph, and let $\mathcal{U}$ be a partition of $V(G)$
such that $G[U]$ is finite connected for each set $U\in\mathcal{U}$.
Let $F$ be a spanning subgraph of $G$, and $\mathcal{V}$ be a
subdivision of $\mathcal{U}$ such that for each $V\in\mathcal{V}$,
$F[V]$ is a component of $G[U]$ for some $U\in\mathcal{U}$. Set
$\mathcal{G}=G/\mathcal{U}$ and $\mathcal{F}=F/\mathcal{V}$. If
$F\unlhd^FG$, then $L(\mathcal{F})\leq^FL(\mathcal{G})$.
\end{lemma}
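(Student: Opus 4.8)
The plan is to transport rays among the four graphs $F,G,\mathcal F,\mathcal G$ and their line graphs by composing the equivalence-preserving correspondences of Lemmas~\ref{LeMapLineGraph} and~\ref{LeMapQuotientGraph}, and to let the single hypothesis $F\unlhd^F G$ do all the real work. First I would record that the construction is well posed: each part $F[V]$ is a component of $F[U]$ (for the $\mathcal U$-class $U\supseteq V$), hence finite and connected, so $\rho_F,\rho'_F$ are defined for $\mathcal F=F/\mathcal V$; moreover two distinct components of $F[U]$ carry no $F$-edge between them, so every edge of $\mathcal F$ joins $\mathcal V$-classes lying in \emph{distinct} $\mathcal U$-classes and is therefore also an edge of $\mathcal G$. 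Thus $E(\mathcal F)\subseteq E(\mathcal G)$ and, since a shared $\mathcal V$-vertex forces a shared $\mathcal U$-vertex, $L(\mathcal F)$ is a subgraph of $L(\mathcal G)$; in particular $\approx_{L(\mathcal F)}$ implies $\approx_{L(\mathcal G)}$ for free. Writing $\mu_G=\lambda_{\mathcal G}\circ\rho_G$ and $\mu'_G=\rho'_G\circ\lambda'_{\mathcal G}$ (and $\mu_F,\mu'_F$ analogously), Lemmas~\ref{LeMapLineGraph} and~\ref{LeMapQuotientGraph} give that $\mu_G,\mu'_G,\mu_F,\mu'_F$ preserve $\approx$ in both directions, with $\mu'_G(\mu_G(R))\approx_G R$, $\mu_G(\mu'_G(\mathcal S))\approx_{L(\mathcal G)}\mathcal S$, and the same for $F$.

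The technical core is the following comparison, which is where I expect the main difficulty to lie: for every ray $\mathcal T$ of $L(\mathcal F)$, regarded also as a ray of $L(\mathcal G)$, one has $\mu'_F(\mathcal T)\approx_G\mu'_G(\mathcal T)$. To prove it I would fix a finite $S\subseteq V(G)$ and consider the finitely many $\mathcal U$-classes meeting $S$; since $G$ is locally finite and each class is finite, only finitely many edges of $\mathcal G$ touch these classes, so a tail of $\mathcal T$ avoids them and hence runs through classes disjoint from $S$. The decisive point is that in the multigraph quotients each edge of $\mathcal G$ (resp. $\mathcal F$) is a single underlying edge of $G$ (resp. $F$): the edge-routes selected by $\lambda'_{\mathcal G}$ and $\lambda'_F$ are subsequences of the edge sequence of $\mathcal T$, while the unfoldings $\rho'_G,\rho'_F$ only traverse spanning-tree paths inside the finite connected classes together with edges joining consecutive classes. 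Thus, beyond a common point, both $\mu'_F(\mathcal T)$ and $\mu'_G(\mathcal T)$ visit only vertices lying in the classes met by that tail of $\mathcal T$, and these classes, being finite, connected and chained by shared vertices, span a single connected subgraph of $G-S$. Hence both rays have tails in one component of $G-S$; as $S$ was arbitrary, $\mu'_F(\mathcal T)\approx_G\mu'_G(\mathcal T)$.

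With this in hand the two faithfulness conditions follow by routine bookkeeping. For condition~(1), given an end of $L(\mathcal G)$ with ray $\mathcal S$, the ray $\mu'_G(\mathcal S)$ of $G$ lies in some end, which by $F\unlhd^F G$ contains a ray $R$ of $F$ with $R\approx_G\mu'_G(\mathcal S)$; then $\mu_F(R)$ is a ray of $L(\mathcal F)$, and the comparison above together with $\mu'_F(\mu_F(R))\approx_F R$ gives $\mu'_G(\mu_F(R))\approx_G\mu'_G(\mathcal S)$, whence $\mu_F(R)\approx_{L(\mathcal G)}\mathcal S$ by the equivalence-preservation of $\mu'_G$. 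For condition~(2), let $\mathcal T_1,\mathcal T_2$ be rays of $L(\mathcal F)$ with $\mathcal T_1\approx_{L(\mathcal G)}\mathcal T_2$; applying $\mu'_G$ and then the comparison yields $\mu'_F(\mathcal T_1)\approx_G\mu'_F(\mathcal T_2)$, so $\mu'_F(\mathcal T_1)\approx_F\mu'_F(\mathcal T_2)$ because $F$ is faithful to $G$, and pushing this back through $\mu_F$ gives $\mathcal T_1\approx_{L(\mathcal F)}\mathcal T_2$. Together with the free reverse implication from $L(\mathcal F)\leq L(\mathcal G)$, this proves $L(\mathcal F)\leq^F L(\mathcal G)$.

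A remark on the obstacle: essentially all the content sits in the comparison $\mu'_F(\mathcal T)\approx_G\mu'_G(\mathcal T)$, where one must reconcile two unfoldings that use different spanning trees and the finer versus the coarser part-structure. The features that make it go through are that the quotients are multigraphs whose edges are genuine single edges of $F$ and $G$ (so both unfoldings track the very edges carried by $\mathcal T$) and that injectivity of a ray forces it to avoid any prescribed finite vertex set; the faithfulness hypothesis $F\unlhd^F G$ is then invoked exactly once, to upgrade $\approx_G$ to $\approx_F$.
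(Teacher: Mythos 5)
Your proposal is correct and takes essentially the same route as the paper's proof: the same compositions of the maps from Lemmas~\ref{LeMapLineGraph} and~\ref{LeMapQuotientGraph}, the same single invocation of $F\unlhd^F G$, and the same technical core, namely the comparison $\rho'_F(\lambda'_{\mathcal{F}}(\mathcal{T}))\approx_G\rho'_G(\lambda'_{\mathcal{G}}(\mathcal{T}))$, which the paper establishes by noting that the $F$-unfolding stays inside the union $S_i$ of the classes met by $\mathcal{T}$, so that its image under $\lambda_{\mathcal{G}}\circ\rho_G$ lies in $V(\mathcal{T})\cup N_{L(\mathcal{G})}(\mathcal{T})$ and is therefore $L(\mathcal{G})$-equivalent to $\mathcal{T}$. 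Your verification of that comparison is carried out directly in $G$ (tails of both unfoldings lie in a common connected subgraph of $G-S$ spanned by the touched classes) rather than up in $L(\mathcal{G})$, but it rests on exactly the same containment observation, so the two arguments differ only in bookkeeping.
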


\begin{proof}
Let $\mathcal{R}$ be an arbitrary ray of $L(\mathcal{G})$. Then
$\lambda'_{\mathcal{G}}(\mathcal{R})$ is a ray of $\mathcal{G}$ and
$\rho'_G(\lambda'_{\mathcal{G}}(\mathcal{R}))$ is a ray of $G$.
Since $F\unlhd^FG$, there is a ray $R$ of $F$ with
$R\approx_G\rho'_G(\lambda'_{\mathcal{G}}(\mathcal{R}))$. Clearly
$\lambda_G(\rho_G(R))=\lambda_F(\rho_F(R))$, implying that
$\lambda_G(\rho_G(R))$ is a ray of $L(\mathcal{F})$. By Lemma
\ref{LeMapLineGraph} and \ref{LeMapQuotientGraph},
$\rho_G(R)\approx_{\mathcal{G}}\rho_G(\rho'_G(\lambda'_{\mathcal{G}}(\mathcal{R})))$,
and
$\lambda_{\mathcal{G}}(\rho_G(R))\approx_{L(\mathcal{G})}\lambda_{\mathcal{G}}(\rho_G(\rho'_G(\lambda'_{\mathcal{G}}(\mathcal{R}))))$.
Since
$\mathcal{R}\approx_{L(\mathcal{G})}\lambda_{\mathcal{G}}(\rho_G(\rho'_G(\lambda'_{\mathcal{G}}(\mathcal{R}))))$,
we have
$\lambda_{\mathcal{G}}(\rho_G(R))\approx_{L(\mathcal{G})}\mathcal{R}$.

Now let $\mathcal{R}_1,\mathcal{R}_2$ be two rays of
$L(\mathcal{F})$ with
$\mathcal{R}_1\approx_{L(\mathcal{G})}\mathcal{R}_2$. We will show
that $\mathcal{R}_1\approx_{L(\mathcal{F})}\mathcal{R}_2$. By Lemmas
\ref{LeMapLineGraph} and \ref{LeMapQuotientGraph}, we have
$\rho'_G(\lambda'_{\mathcal{G}}(\mathcal{R}_1))\approx_G\rho'_G(\lambda'_{\mathcal{G}}(\mathcal{R}_2))$.
For $i=1,2$, set
$$S_i=\{v\in V(G): \mbox{there is an edge }U_1U_2\mbox{ of }\mathcal{G}\mbox{ with }U_1U_2\in
V(\mathcal{R}_i)\mbox{ such that }v\in U_1\cup U_2\}.$$ Clearly
$V(\rho'_F(\lambda'_{\mathcal{F}}(\mathcal{R}_i)))\subseteq S_i$,
implying that
$V(\lambda_{\mathcal{G}}(\rho_G(\rho'_F(\lambda'_{\mathcal{F}}(\mathcal{R}_i)))))\subseteq
V(\mathcal{R}_i)\cup N_{L(\mathcal{G})}(\mathcal{R}_i)$. It follows
that
$\lambda_{\mathcal{G}}(\rho_G(\rho'_F(\lambda'_{\mathcal{F}}(\mathcal{R}_i))))\approx_{L(\mathcal{G})}\mathcal{R}_i$.
Recall that
$\lambda_{\mathcal{G}}(\rho_G(\rho'_G(\lambda'_{\mathcal{G}}(\mathcal{R}_i))))\approx_{L(\mathcal{G})}\mathcal{R}_i$.
By Lemmas \ref{LeMapLineGraph} and \ref{LeMapQuotientGraph},
$\rho'_F(\lambda'_{\mathcal{F}}(\mathcal{R}_i))\approx_G\rho'_G(\lambda'_{\mathcal{G}}(\mathcal{R}_i))$.
Thus we have
$\rho'_F(\lambda'_{\mathcal{F}}(\mathcal{R}_1))\approx_G\rho'_F(\lambda'_{\mathcal{F}}(\mathcal{R}_2))$.
Since $F\unlhd^FG$,
$\rho'_F(\lambda'_{\mathcal{F}}(\mathcal{R}_1))\approx_F\rho'_F(\lambda'_{\mathcal{F}}(\mathcal{R}_2))$.
Again by Lemmas \ref{LeMapLineGraph} and \ref{LeMapQuotientGraph},
$\lambda_{\mathcal{F}}(\rho_F(\rho'_F(\lambda'_{\mathcal{F}}(\mathcal{R}_1))))\approx_{L(\mathcal{F})}\lambda_{\mathcal{F}}(\rho_F(\rho'_F(\lambda'_{\mathcal{F}}(\mathcal{R}_2))))$.
Recall that
$\mathcal{R}_i\approx_{L(\mathcal{F})}\lambda_{\mathcal{F}}(\rho_F(\rho'_F(\lambda'_{\mathcal{F}}(\mathcal{R}_i))))$,
$i=1,2$, we have
$\mathcal{R}_1\approx_{L(\mathcal{F})}\mathcal{R}_2$. Therefore
$L(\mathcal{F})\leq^FL(\mathcal{G})$.
\end{proof}

\begin{lemma}\label{LePendantRemove}
Let $G$ be an infinite locally finite graph, and $F$ be the graph
obtained from $G$ by removing some pendant vertices. Then
$F\leq^FG$.
\end{lemma}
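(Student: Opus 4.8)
The plan is to verify the two defining conditions of a faithful subgraph directly, exploiting the single structural fact that a pendant vertex has degree one and therefore can never occur as an \emph{interior} vertex of a ray or of a path: it can appear only as an endpoint. Write $F=G-P$, where $P\subseteq V(G)$ is the set of removed vertices, each of degree one in $G$. Since $F\leq G$, the remark following the definition of faithful subgraph lets me replace condition~(2) by the single implication that, for rays $R_1,R_2$ of $F$, $R_1\approx_GR_2$ implies $R_1\approx_FR_2$. Thus only two things remain to be checked: condition~(1), and this implication.

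For condition~(1), I would take an arbitrary end $\alpha\in\varOmega(G)$ and a ray $R=u_0u_1u_2\ldots$ of $G$ in $\alpha$. Each $u_i$ with $i\geq1$ has the two neighbours $u_{i-1},u_{i+1}$ on $R$, hence degree at least two in $G$, and so is not pendant; therefore none of $u_1,u_2,\ldots$ lies in $P$. Consequently the tail $u_1u_2\ldots$ is a ray of $F$, and being a tail of $R$ it still belongs to $\alpha$. This produces a ray of $F$ in every end of $G$.

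For the implication in condition~(2), let $R_1,R_2$ be rays of $F$ with $R_1\approx_GR_2$, and let $S\subseteq V(F)$ be an arbitrary finite set. Let $R'_1,R'_2$ be the tails of $R_1,R_2$ that avoid $S$; they are rays of $F-S$. Since $R_1\approx_GR_2$ and both tails avoid $S$, they lie in a single component $C$ of $G-S$. Fix $u_1\in V(R'_1)$, $u_2\in V(R'_2)$, and a path $Q\subseteq C$ joining them in $G-S$. Every interior vertex of $Q$ has degree at least two in $Q$, hence degree at least two in $G$, so it is not pendant and does not lie in $P$; the endpoints $u_1,u_2$ lie in $F$ by choice. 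Therefore $Q$ is a path of $F$ avoiding $S$, and $u_1,u_2$ belong to one component of $F-S$. As $R'_1,R'_2$ are rays of $F-S$ through $u_1,u_2$ respectively, both tails lie in that component, whence $R_1\approx_FR_2$.

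I do not anticipate a genuine obstacle: the whole argument is driven by the elementary observation that a degree-one vertex cannot sit in the interior of a ray or a path, which simultaneously rescues the connecting path in condition~(2) and the surviving tail in condition~(1). The only point demanding a little care is to read the hypothesis so that every removed vertex is pendant \emph{in $G$ itself} (the vertices of $P$ deleted simultaneously), rather than merely becoming pendant after earlier deletions; with this reading the degree bound ``$\deg\geq2$'' is applied in $G$ throughout, and the argument goes through cleanly.
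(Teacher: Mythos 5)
Your proof is correct and takes essentially the same approach as the paper: the paper's entire proof is the one-line observation that for every ray $R$ of $G$, all but at most the first vertex and edge of $R$ are contained in $F$ --- precisely your principle that a degree-one vertex cannot occur in the interior of a ray or path. You have simply written out in full the verification of both faithfulness conditions (using the remark that condition (2) reduces to the implication $R_1\approx_GR_2\Rightarrow R_1\approx_FR_2$) that the paper leaves implicit.
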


\begin{proof}
The assertion is deduced by the fact that for every ray $R$ of $G$,
all but at most the first vertex and edge of $R$ are contained in
$F$.
\end{proof}

An \emph{essential} cut-edge of graph $G$ is one whose removal
products at least two non-trivial components. Note that $L(G)$ is
2-connected if and only if $G$ has no essential cut-edge.

\begin{lemma}\label{LeBlockSemiCactus}
Let $G$ be an infinite locally finite connected graph such that\\
(1) every block of $G$ is either a cycle or a $K_2$ (i.e., $G$
contains no $\varTheta$-graph);\\
(2) if a vertex $v$ is not contained in a cycle, then all but at
most two neighbors of $v$ are pendant.\\
Then $L(G)$ has a faithful spanning even semi-cactus.
\end{lemma}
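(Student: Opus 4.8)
The plan is to work in $\mathcal{G}=L(G)$ and to assemble the required spanning even semi-cactus $D$ blockwise along the block-cut tree of $G$. I regard each vertex of $\mathcal{G}$ as an edge of $G$; the edges incident with a fixed $v\in V(G)$ span a clique $K_{d_G(v)}$ in $\mathcal{G}$, so at $v$ I am free to join any two of them. By hypothesis (1) every block of $G$ is a cycle or a $K_2$, so $E(G)$ splits into \emph{cycle-edges} (those lying in cycle-blocks) and \emph{bridges} ($K_2$-blocks), and contracting every cycle-block turns $G$ into a tree whose edges are exactly the bridges (any cycle of $G$ lives inside one block). For each cycle $C$ of $G$ the edges of $C$ induce a cycle $L(C)$ of length $|C|$ in $\mathcal{G}$; these, after a parity correction, will be the cycle-blocks of $D$, while the bridges will be threaded through $\mathcal{G}$ to link consecutive cycle-blocks exactly as they link cycle-blocks in the block-cut tree of $G$. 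By Lemma \ref{LeMapLineGraph} the ends of $\mathcal{G}$ correspond bijectively to those of $G$, so a subgraph that follows the global shape of $G$ will have the right end structure.

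The heart of the construction is the local gadget at each $v$, designed so that every block of $D$ is \emph{even} and every edge-vertex lies in at most two blocks with total degree at most $4$. The basic moves in the clique at $v$ are \emph{detours}: if $a,a'$ are the two $C$-edges at $v$, I may replace the $\mathcal{G}$-edge $aa'$ of $L(C)$ by a path $a\,f\,a'$ (length-$1$ detour, flipping parity) or by $a\,f\,g\,a'$ (length-$2$ detour, preserving parity), where $f,g$ are further edges at $v$; a detoured edge acquires degree $2$ inside the modified cycle-block, i.e.\ one block, and keeps a free slot for its other endpoint. Using length-$2$ detours I absorb bridges at cycle-vertices \emph{in pairs} (including all pendant bridges, whose far endpoint contributes nothing), and for each odd cycle $C$ I perform exactly one length-$1$ detour through a spare edge at some vertex of $C$; such a spare edge exists because $G$ is infinite and connected while $C$ is finite. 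This makes every cycle-block even. A \emph{cut-vertex lying on several cycles} has $2$ ports per cycle, so its cycles are chained by $K_2$-links between ports (each port then lies in its cycle plus one $K_2$, degree $3$). At a \emph{non-cycle} vertex there is no cycle to absorb into, but hypothesis (2) guarantees at most two non-pendant (``tree'') neighbours; I therefore lay down a single path through all bridges at $v$, taking the (at most two) tree-bridges as its ends so that each reserves a slot for its far endpoint, with pendant bridges interior. A direct count shows every edge-vertex ends in at most two blocks of degrees summing to at most $4$, and a tree-bridge joining two non-cycle vertices becomes the end of the path at each endpoint, hence a degree-$2$ cut-vertex of $D$ linking the two local paths.

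With the gadgets fixed, connectivity and spanning are immediate: every cycle-block is connected internally, the bridges realise every adjacency of the block-cut tree of $G$ (via port-links at multi-cycle vertices and via the local paths at non-cycle vertices), and every edge of $G$ is covered, so $D$ is a connected spanning even semi-cactus of $\mathcal{G}$. For faithfulness I exhaust $G$ along its block-cut tree: rooting that tree, I let $\mathcal{D}_i$ be the finite collection of local gadget-pieces at distance $i$ from the root and set $D=\bigcup_i\bigcup\mathcal{D}_i$. Because the block-cut tree is a tree, each piece in $\mathcal{D}_{i+1}$ meets a unique piece of $\mathcal{D}_i$, and every component of $\mathcal{G}-\bigcup(\bigcup_{k\le i}\mathcal{D}_k)$ consists of the edge-vertices of the not-yet-covered blocks hanging at one cut-vertex, hence meets a unique piece of $\mathcal{D}_{i+1}$. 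These are exactly hypotheses (1)--(4) of Lemma \ref{LeFaithfulSubgraph3} (with ambient graph $\mathcal{G}$), which yields $D\unlhd^F\mathcal{G}$; pendant edges sit as finite leaves and do not affect ends, consistently with Lemma \ref{LePendantRemove}. (Equivalently one may take $F_j=\bigcup(\bigcup_{i\le j}\mathcal{D}_i)$ and invoke Lemma \ref{LeFaithfulSubgraph2}.)

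I expect the main obstacle to be the simultaneous bookkeeping in the second paragraph: keeping \emph{every} cycle-block even while spanning \emph{all} bridges under the hard constraints $\Delta(D)\le 4$ and ``at most two blocks per vertex.'' The delicate point is that cycle-vertices may carry many bridges, which is why bridges must be absorbed in \emph{pairs} by length-$2$ detours (parity-neutral and slot-free) rather than attached by individual $K_2$-links, and why hypothesis (2) is exactly what is needed at non-cycle vertices to force the local structure to be a single path with at most two ``outgoing'' ends. A secondary obstacle is the verification of the uniqueness hypothesis (4) of Lemma \ref{LeFaithfulSubgraph3} inside $L(G)$: one must check that distinct uncovered subtrees of blocks reattach to distinct next-level pieces, which relies on the fact that in $\mathcal{G}$ two such subtrees can only communicate through the clique of a common cut-vertex already captured by the current level.
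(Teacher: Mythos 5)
Your overall architecture coincides with the paper's: build local gadgets in $L(G)$ around the blocks of $G$, organize them in layers along a rooted block-cut tree, and feed the layered family into Lemma \ref{LeFaithfulSubgraph3} to get faithfulness. The faithfulness layer of your argument is essentially right. The genuine gap is in the local construction. At a vertex $v$ of a cycle $C$, the cycle $L(C)$ has exactly one edge inside the clique at $v$, namely $aa'$ where $a,a'$ are the two $C$-edges at $v$; hence at most one detour can be performed at $v$. Your rule ``absorb bridges at cycle-vertices in pairs by length-$2$ detours'' therefore cannot handle a cycle-vertex carrying three or more bridges, and nothing in the hypotheses prevents this: condition (2) restricts only vertices \emph{not} lying on cycles, so a cycle-vertex may carry arbitrarily many pendant edges, and a pendant edge $f=vw$ must be picked up inside the clique at $v$, since at its pendant end $w$ it has no neighbours in $L(G)$ at all. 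Consequently the parity bookkeeping also collapses: the finished block for $C$ has length $|C|$ plus the number of absorbed edge-vertices, that number is partly forced by the pendant edges at $V(C)$, so ``one length-$1$ detour per odd cycle'' is not a valid correction.

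Moreover, the correction device you propose --- a spare edge at some vertex of $C$ --- needs a global coordination rule which the proposal does not supply. Concrete failure: let $C,C'$ be odd cycles meeting at a cut-vertex $v$, let the only other edge at $v$ be a single bridge $g$ leading to the infinite rest of $G$, and let all other vertices of $C\cup C'$ have degree $2$; this satisfies (1) and (2). Both cycles can only be repaired at $v$. They cannot both detour through $g$: then $g$ lies in two cycle-blocks with degree $4$, yet $g$ is a cut-vertex of $L(G)$ and must still send an edge of $D$ towards the rest of the graph, which is now impossible. If instead each cycle detours through an edge of the other, the two cycle-blocks share two vertices, so their union is $2$-connected and the corresponding block of $D$ is a $\varTheta$-graph, not a cycle. (A correct choice exists --- one cycle absorbs $g$, the other absorbs an edge of the first --- but your scheme contains no rule producing it.) The paper sidesteps all of this: its gadget $G_B$ is the cycle $B$ together with \emph{all} $K_2$-blocks attached to it (plus, at a shared cut-vertex, the next cycle $\varPsi_B$), and $D_B$ is a Hamiltonian cycle of $L(G_B)$, which exists because $B$ (or $B\cup\varPsi_B$) is a dominating closed trail of $G_B$; parity is repaired not by detours but by either placing the connector $\varepsilon_B$ on that Hamiltonian cycle or hanging it off as the pendant edge $\varepsilon_B\epsilon_B$ attached to a Hamiltonian cycle of $L(G_B)-\varepsilon_B$, and the rooted ordering of the block tree (cycle-blocks before $K_2$-blocks, essential cut-edges before pendant ones) performs the coordination between neighbouring gadgets. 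To repair your proof you would need to replace paired length-$2$ detours by detours of arbitrary length through the whole set of bridges at $v$ and then redo the parity and coordination analysis --- which is essentially what the paper's cases (a)--(f) accomplish.
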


\begin{proof}
Suppose first that $G$ has no $K_2$-block. Choose an arbitrary edge
$uv\in E(G)$. We cleave $u$ to $u_1,u_2$ such that $u_1$ adjacent to
$v$ and $u_2$ is adjacent to all other neighbors of $u$. Let $F$ be
obtained from the resulting graph by removing the edge $u_1u_2$.
Then $F$ satisfies the conditions (1)(2) and has a $K_2$-block.
Moreover, by Lemma \ref{LeFaithfulQuotientGraph} (take $\mathcal{U}$
as one set $\{u_1,u_2\}$ together will all singletons consist with a
vertex in $V(G)\backslash\{u\}$), we see that $L(F)\unlhd^FL(G)$.
Thus, it sufficiency to consider the case that $G$ has some
$K_2$-blocks.

Let $\mathcal{B}$ be the set of blocks of $G$ and $U$ be the set of
c-vertices of $G$. We define a graph $\mathcal{T}$ on
$\mathcal{B}\cup U$ such that for any $B\in\mathcal{B}$ and $u\in
U$, $Bu\in E(\mathcal{T})$ if and only if $u\in V(B)$. Clearly
$\mathcal{T}$ is a tree, and we take a $K_2$-block as the root of
$\mathcal{T}$. We assign an orientation to each cycle of $G$, and
for each $u\in U$, we assign an order to $N^+(u)$ (the set of sons
of $u$ in $\mathcal{T}$) in such a way that the cycle-blocks always
appear before the $K_2$-blocks, and the $K_2$-blocks which are
essential cut-edges always appear before the pendant edges.

Let $B\in\mathcal{B}$ which is not the root. We define some
associated vertices $u_B,v_B,w_B$, edges $\varepsilon_B,\epsilon_B$,
and blocks $\varPhi_B,\varPsi_B$ as follows. Let $v_B$ be the father
of $B$ in $\mathcal{T}$. If $B$ is the first son of $v_B$, then let
$\varPhi_B$ be the father of $v_B$; otherwise let $\varPhi_B$ be the
son of $v_B$ that exactly elder than $B$. If $\varPhi_B$ is a $K_2$,
then let $u_B$ be the vertex of $B$ other than $v_B$; if $\varPhi_B$
is a cycle, then let $u_B$ be the predecessor of $v_B$ on
$\varPhi_B$. Let $\varepsilon_B=u_Bv_B$. If $B$ is a $K_2$, then let
$w_B$ be the vertex of $B$ other than $v_B$; if $B$ is a cycle, then
let $w_B$ be the successor of $v_B$ on $B$. Let $\epsilon_B=v_Bw_B$.
If $B$ is a cycle and $w_B$ is contained in a cycle other than $B$,
then let $\varPsi_B$ be the first son of $w_B$ (which is a
cycle-block by our order on $N^+(w_B)$); otherwise $\varPsi_B$ is
not defined.

Note that the root of $\mathcal{T}$ is $\varPhi_B$ of at most two
blocks $B$, and its edge is $\epsilon_B$ of at most two blocks $B$.
Apart from this, every block is $\varPhi_B$ of at most one block
$B$, and is $\varPsi_B$ of at most one block $B$, and every edge is
$\varepsilon_B$ of at most one block $B$, and is $\epsilon_B$ of at
most one block $B$.

Now we define a finite subgraph $G_B$ of $G$ and a spanning even
semi-cactus $D_B$ of $L(G_B)$ as follows.\\
(a) If both $B$ and $\varPhi_B$ are $K_2$'s, then $G_B$ consists
of $B$ and all $K_2$-blocks attached to $v_B$.\\
(b) If $B$ is a $K_2$ and $\varPhi_B$ is a cycle, then $G_B$
consists of $B$ and $\varepsilon_B$.\\
(c) If $B$ is a cycle, $\varPhi_B$ is a $K_2$ and $\varPsi_B$ does
not exist, then $G_B$ consists of $B$ and all $K_2$-blocks attached
to some vertex in $V(B)$.\\
(d) If both $B$ and $\varPhi_B$ are cycles and $\varPsi_B$ does not
exist, then $G_B$ consists of $B$, $\varepsilon_B$ and all
$K_2$-blocks attached to some vertex in $V(B)\backslash\{v_B\}$.\\
(e) If $B$ is a cycle, $\varPhi_B$ is a $K_2$ and $\varPsi_B$
exists, then $G_B$ consists of $B$, $\varPsi_B$ and all $K_2$-blocks
attached to some vertex in $V(B)\cup V(\varPsi_B)$.\\
(f) If both $B$ and $\varPhi_B$ are cycles and $\varPsi_B$ exists,
then $G_B$ consists of $B$, $\varepsilon_B$, $\varPsi_B$ and all
$K_2$-blocks attached to some vertex in $V(B)\cup
V(\varPsi_B)\backslash\{v_B\}$.\\
Note that $\varepsilon_B$ is always contained in $G_B$, and if an
edge $e\neq\varepsilon_B$ is contained in $G_B$, then the block
containing $e$ is also contained in $G_B$.

If $B$ is a $K_2$, then $G_B$ is a star. For this case let $D_B$ be
a Hamiltonian path of $L(G_B)$ between $\varepsilon_B$ and
$\epsilon_B$. For the case $B$ is a cycle, noting that
$B\cup\varPsi_B$ (or $B$ if $\varPsi_B$ does not exist) is a
dominating closed trail of $G_B$, both $L(G_B)$ and
$L(G_B)-\varepsilon_B$ are Hamiltonian. If $|E(G_B)|$ is even, then
let $D_B$ be a Hamiltonian cycle of $L(G_B)$; if $|E(G_B)|$ is odd,
then let $D_B$ consists of $\varepsilon_B\epsilon_B$ and a
Hamiltonian cycle of $L(G_B)-\varepsilon_B$.

Let $\mathcal{B}_1=\{B: \varepsilon_B\mbox{ is the root of
}\mathcal{T}\}$; and for $i=1,2,\ldots$, let
$$\mathcal{B}_{i+1}=\{B: \varepsilon_B\in E(G_{B'})\mbox{ for some
}B'\in\mathcal{B}_i\mbox{ and }B\mbox{ is not contained in
}G_{B''}\mbox{ for any }B''\in\mathcal{B}_i\}.$$ Set
$\mathcal{G}_i=\{G_B: B\in\mathcal{B}_i\}$,
$\mathcal{G}=\bigcup_{i=1}^{\infty}\mathcal{G}_i$,
$\mathcal{D}_i=\{D_B: B\in\mathcal{B}_i\}$,
$\mathcal{D}=\bigcup_{i=1}^{\infty}\mathcal{D}_i$ and
$D=\bigcup\mathcal{D}$. By definition we see that every edge $e$ of
$G$ is contained in at most two graphs in $\mathcal{G}$, and if $e$
is contained in two graphs in $\mathcal{G}$, say
$G_{B_i}\in\mathcal{G}_i$ and $G_{B_{i+1}}\in\mathcal{G}_{i+1}$,
then $e=\varepsilon_{B_{i+1}}$. Clearly $D\unlhd L(G)$.

We can see that for every block $B_{i+1}\in\mathcal{B}_{i+1}$, there
is a unique graph $B_i\in\mathcal{B}_i$ such that
$E(G_{B_{i+1}})\cap E(G_{B_i})\neq\emptyset$. In fact $G_{B_i}$ is
the graph that containing $\varPhi_B$, and $E(G_{B_{i+1}})\cap
E(G_{B_i})=\{\varepsilon_B\}$.

Recall that the root of $\mathcal{T}$ is a c-vertex of graphs in
$\mathcal{D}_1$. We claim that if an edge is contained in two graphs
in $\mathcal{D}$, then it is a d-vertex of both graphs. Suppose $e$
is a c-vertex of $D_B$. Then either both $B$ and $\varPhi_B$ are
$K_2$'s, and $e$ is in a $K_2$-block attached to $v_B$ other than
$B$ and $\varPhi_B$; or $B$ is a cycle and $e=\epsilon_B$. If both
$B$ and $\varPhi_B$ are $K_2$'s, and $e$ is in a $K_2$-block
attached to $v_B$ other than $B$ and $\varPhi_B$, then $v_B$ is not
contained in a cycle; otherwise $G_B\notin\mathcal{G}$. If $e$ is a
essential cut-edge of $G$, then so is $\epsilon_B$ by our order on
$N^+(v_B)$, and $v_B$ has three neighbors which are c-vertices of
$G$, a contradiction. Thus we have that $e$ is a pendant edge of
$G$, implying that $e\neq\varepsilon_{B'}$ for any block
$B'\in\bigcup_{i=1}^{\infty}\mathcal{B}_i$. Suppose now that $B$ is
a cycle and $e=\epsilon_B$. In this case $\varPsi_B$ is the only
possible block $B'$ with $e=\varepsilon_{B'}$. But now $B'$ is
contained in $G_B$ and $G_{B'}\notin\mathcal{G}$. In both case $e$
is contained in exactly one graphs in $\mathcal{D}$, as we claimed.

We can see that $D$ is a spanning even semi-cactus of $L(G)$.

Note that $\mathcal{D}_1$ consists of exactly one or two graphs, and
if $|\mathcal{D}_1|=2$, then they are common to the root of
$\mathcal{T}$. For every $D_{B_{i+1}}\in\mathcal{D}_{i+1}$, there is
a unique $D_{B_i}\in\mathcal{D}_i$ with $V(D_{B_{i+1}})\cap
V(D_{B_1})\neq\emptyset$, and for every component $H$ of
$L(G)-\bigcup(\bigcup_{i=1}^j\mathcal{D}_i)$, there is a unique
$D_{B_{i+1}}\in\mathcal{D}_{i+1}$ with $V(D_{B_{i+1}})\cap
V(H)\neq\emptyset$. By Lemma \ref{LeFaithfulSubgraph3},
$D\unlhd^FL(G)$.
\end{proof}

\begin{theorem}
Let $G$ be an infinite locally finite graph. If $L(G)$ is
2-connected, then $L(G)\square K_2$ has a Hamiltonian circle.
\end{theorem}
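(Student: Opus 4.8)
The plan is to produce a faithful spanning even semi-cactus $D$ of $L(G)$ and then run the now-familiar prism argument: by Lemma~\ref{LePrismFaithful} we obtain $D\square K_2\unlhd^F L(G)\square K_2$; by Lemma~\ref{LeSemiCactusHamiltonDegree} (which rests on Lemma~\ref{LeSemiHamilton}) the prism $D\square K_2$ has a Hamiltonian curve and every one of its ends has degree $2$; and then Theorem~\ref{ThMain} delivers a Hamiltonian circle of $L(G)\square K_2$. So the whole content of the proof is the construction of $D$, for which I would invoke Lemma~\ref{LeBlockSemiCactus}.

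The difficulty is that Lemma~\ref{LeBlockSemiCactus} applies only to a graph meeting its conditions (1) (no $\varTheta$-graph, i.e.\ every block is a cycle or a $K_2$) and (2) (every vertex lying on no cycle has at most two non-pendant neighbours), whereas $G$ is arbitrary subject only to $L(G)$ being $2$-connected. Condition (2) is in fact automatic here: since $L(G)$ is $2$-connected, $G$ has no essential cut-edge, so every edge at a vertex $v$ on no cycle is a bridge with one trivial side, and if $d_G(v)\ge 2$ this forces every neighbour of $v$ to be a leaf, which is stronger than (2). Thus the only genuine obstruction is condition (1): I must remove the $\varTheta$-graphs from the $2$-connected blocks of $G$ without disturbing the line-graph structure.

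The tool for this is \emph{cleaving}. Cleaving a vertex $u$ into $u_1,u_2$ and then \emph{discarding} the new edge $e_u=u_1u_2$ leaves the edge set of $G$ unchanged, so the line graph of the cleaved-and-trimmed graph is a \emph{spanning} subgraph of $L(G)$; and applying Lemma~\ref{LeFaithfulQuotientGraph} with the partition that merges each pair $\{u_1,u_2\}$ back to $u$ shows this line graph to be a \emph{faithful} spanning subgraph of $L(G)$, provided the trimmed graph is faithful in the cleaved graph. I would therefore cleave vertices of $G$ so that the resulting graph $G'$ becomes $\varTheta$-free while staying connected, confining all cleavings to the interiors of the $2$-connected blocks so that the cleaved vertices remain on cycles and condition (2) stays undisturbed, in direct analogy with the construction behind Theorem~\ref{LeFaithfulCactus}.

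The main obstacle is the faithfulness of this cleaving in the infinite setting: a careless split can separate two tails of a single end of $G$, so that the trimmed graph fails to be faithful in the cleaved graph and the quotient lemma cannot be applied. I would handle this exactly as in the earlier constructions, performing the cleavings along a finite exhaustion of $G$ (a block-tree or normal-tree exhaustion) and arranging the increasing finite stages to satisfy the neighbourhood-nesting hypotheses of Lemma~\ref{LeFaithfulSubgraph3} (or Lemma~\ref{LeFaithfulSubgraph2}), which then certifies that $G'$ is faithful spanning in the cleaved graph and hence, via Lemma~\ref{LeFaithfulQuotientGraph}, that $L(G')\unlhd^F L(G)$. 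Once $G'$ satisfies (1) and (2), Lemma~\ref{LeBlockSemiCactus} yields a faithful spanning even semi-cactus $D$ of $L(G')$, and Lemma~\ref{LeFaithfulFaithful} transports it to $D\unlhd^F L(G)$, completing the construction of $D$ and thereby the proof.
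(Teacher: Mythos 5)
Your overall frame coincides with the paper's: everything reduces to producing a faithful spanning even semi-cactus $D$ of $L(G)$, after which Lemma~\ref{LePrismFaithful}, Lemma~\ref{LeSemiCactusHamiltonDegree} and Theorem~\ref{ThMain} finish the job, and Lemma~\ref{LeBlockSemiCactus} is indeed the tool for producing $D$. The gap lies in how you propose to meet hypothesis (1) of Lemma~\ref{LeBlockSemiCactus}. You plan to cleave $G$ directly into a connected $\varTheta$-free graph on the same edge set, ``in direct analogy with the construction behind Theorem~\ref{LeFaithfulCactus}'', and certify faithfulness by Lemma~\ref{LeFaithfulSubgraph2} or \ref{LeFaithfulSubgraph3}. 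But this is precisely the hard core of the theorem, and the analogy is not available: Theorem~\ref{LeFaithfulCactus} and the lemmas behind it (Lemmas~\ref{LeBlockChain} and \ref{LeCycleNeigborhood}) are proved only for \emph{subcubic} graphs and use subcubicity essentially (e.g.\ that distinct components of $G-C_0$ have disjoint neighbourhoods on $C_0$, or that $N_{G'}(u')=\{u''\}$). Moreover, they produce a spanning \emph{subgraph}, i.e.\ they are allowed to discard edges, whereas your cleaving must place \emph{every} edge of $G$ into the cycle/$K_2$ block structure while keeping the trimmed graph connected, because every edge of $G$ is a vertex of $L(G)$ and must appear in the semi-cactus. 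These are genuinely different tasks. The missing idea is the paper's pendant re-attachment trick: the paper removes pendant vertices, cleaves only to \emph{reduce degrees} (the resulting $G''$ is 2-edge-connected subcubic but still full of $\varTheta$-graphs), destroys the $\varTheta$-graphs by passing to a faithful spanning cactus $F''$ of $G''$ — that is, by \emph{deleting} edges, not by cleaving — quotients back via Lemma~\ref{LeFaithfulQuotientGraph}, and only then re-attaches every edge not used by the cactus as a \emph{pendant} edge. Pendant edges create only $K_2$-blocks and pendant neighbours, which is exactly why having all edges present becomes compatible with hypotheses (1) and (2) of Lemma~\ref{LeBlockSemiCactus}.

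A secondary problem: your claim that condition (2) survives the cleaving (``confining all cleavings to the interiors of the 2-connected blocks so that the cleaved vertices remain on cycles'') does not hold up. Cleaving changes the cycle structure: a copy $u_i$ created inside a 2-connected block need not lie on any cycle of the trimmed graph (a copy inheriting one edge is a leaf, and copies of larger degree can land in the tree part of the final block structure), so condition (2) must be engineered rather than inherited; in the paper it holds because every non-cycle vertex of the core $F'$ has degree at most 2 there (a consequence of the cactus structure), and all of its remaining neighbours in $F$ are pendant by construction. Your observation that condition (2) is automatic for $G$ itself is correct but carries no weight, since Lemma~\ref{LeBlockSemiCactus} is applied to the cleaved graph, not to $G$. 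Finally, invoking Lemma~\ref{LeFaithfulSubgraph2}/\ref{LeFaithfulSubgraph3} for faithfulness is the right kind of move, but without an explicit nested exhaustion realizing their hypotheses it is a statement of intent, and constructing such an exhaustion for graphs of unbounded degree is exactly what the paper avoids by first reducing to the subcubic case.
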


\begin{proof}
By condition $G$ has no essential cut-edge. Let $V_1$ be the set of
vertices of degree 1 in $G$, and let $G'=G-V_1$. Thus $G'$ is
2-edge-connected. By Lemma \ref{LePendantRemove}, $G'\leq^FG$, and
by Lemma \ref{LeFaithfulLineGraph}, $L(G')\leq^FL(G)$.

For a vertex $u\in V(G')$ with $d_{G'}(u)\geq 4$, we cleave $u$ such
that each of $u_1,u_2$ is adjacent to at least two neighbors of $u$.
Moreover, we can always do this in such way the the added edge
$u_1u_2$ is not a cut-edge. We repeatedly do this on every vertex of
degree at least 4. So the resulting graph $G''$ is a
2-edge-connected subcubic graph (and thus, is 2-connected). Every
vertex $u\in V(G')$ is cleaved to a tree $T_u$ in $G''$ ($T_u$ is
trivial if $d_{G'}(u)\leq 3$). Set $\mathcal{U}=\{V(T_u): u\in
V(G')\}$. Then $G'=G''/\mathcal{U}$.

By Lemma \ref{LeFaithfulCactus}, $G''$ has a faithful spanning
cactus $F''$. Let $\mathcal{V}$ be a subdivision of $\mathcal{U}$
such that for each $V\in\mathcal{V}$, $F''[V]$ is a component of
$G''[U]$ for some $U\in\mathcal{U}$. Let $F'=F''/\mathcal{V}$. By
Lemma \ref{LeFaithfulQuotientGraph}, $L(F')\leq^FL(G')$.

Note that every $V\in\mathcal{V}$ induces a tree of $F''$. For each
two set $V_1,V_2\in\mathcal{V}$, $V_1,V_2$ are separable by an
edge-cut of size $k$ in $F''$ if and only if they are separable by
an edge-cut of size $k$ in $F'$. Moreover, if $V\cap
V(B)\neq\emptyset$ for some cycle-block of $F''$, then $V$ is
contained in some cycle of $F'$. It follows that every block of $F'$
is either a $K_2$ or a cycle, and if a vertex $V$ of $F'$ is not
contained in a cycle-block, then $d_{F'}(V)\leq 2$.

Set $\mathcal{U}_{\mathcal{V}}=\{\{V\in\mathcal{V}: V\subseteq U\}:
U\in\mathcal{U}\}$. It follows that every set of
$\mathcal{U}_{\mathcal{V}}$ is an independent set of $F'$ and
$F'/\mathcal{U}_{\mathcal{V}}\unlhd G'$. Let $e$ be an arbitrary
vertex of $G$ that not contained in $F'/\mathcal{U}_{\mathcal{V}}$.
Then $e$ is incident to at least one vertex $u\in V(G')$. Let $V$ be
a vertex of $F'$ with $V\subseteq V(T_u)$. We attach the edge $e$ to
$V$ (i.e., $e$ will be a pendant edge). Let $F$ be the graph
obtained from $F'$ by attaching all edges in $E(G)\backslash
E(F'/\mathcal{U}_{\mathcal{V}})$. Thus $L(F)\unlhd L(G)$, and all
attached edges are pendant in $F$. By Lemma \ref{LePendantRemove}
and Lemma \ref{LeFaithfulLineGraph}, $L(F')\leq^FL(F)$.

Recall that every block of $F$ is either a $K_2$ or a cycle. For any
vertex $V$ of $F'$ that is not contained in a cycle, $V$ has at most
two neighbors in $F'$, and all vertices in $N_F(V)\backslash
N_{F'}(V)$ is pendant in $F$. It follows that all but at most two
neighbors of $V$ are c-vertices of $F$. By Lemma
\ref{LeBlockSemiCactus}, there is an even semi-cactus $D$ with
$D\unlhd^FL(F)$.

By Lemma \ref{LeFaithfulFaithful}, and the fact $L(G')\leq^FL(G)$,
$L(F')\leq^FL(F)$, $L(F')\leq^FL(G')$, we have $L(F)\leq^FL(G)$.
Since $L(F)\unlhd L(G)$, $L(F)\unlhd^FL(G)$. By Lemma
\ref{LeFaithfulFaithful}, and the fact $D\unlhd^FL(F)$, we have
$D\unlhd^FL(G)$. By Lemma \ref{LeSemiCactusHamiltonDegree} and
Theorem \ref{ThMain}, $G$ has a Hamiltonian circle.
\end{proof}

\end{document}